\definecolor{darkblue}{rgb}{0.0,0.0,0.3}
\newtheorem{theorem}{Theorem}[section]
\newtheorem{lemma}[theorem]{Lemma}
\theoremstyle{definition}
\newtheorem{definition}[theorem]{Definition}
\newtheorem{notation}[theorem]{Notation}
\theoremstyle{remark}
\newtheorem{remark}[theorem]{Remark}
\numberwithin{equation}{section}
\newcommand{\bR}{\mathbb{R}}
\newcommand{\bS}{\mathbb{S}}
\newcommand{\Ampere}{Amp\`{e}re}
\newcommand{\Garding}{G\r{a}rding}
\newcommand\norm[1]{\left\lVert#1\right\rVert}
\title[A Class of Hessian Type Equations]{Starshaped Compact Hypersurfaces in Warped Product Manifolds II: A Class of Hessian Type Equations}
\author{Bin Wang}
\address[]{Department of Mathematics, The Chinese University of Hong Kong, the Hong Kong Special Administrative Region, People's Republic of China.} 
\email{bwang@math.cuhk.edu.hk}
\subjclass[2020]{Primary 53C21; Secondary 35J60}
\keywords{Prescribed Weingarten curvature equations, Hessian type equations, second order estimates.}
\begin{document}

\begin{abstract}
In this note, we prove the existence of one particular class of starshaped compact hypersurfaces, by deriving global curvature estimates for such hypersurfaces; this generalizes the main result in [Hypersurfaces of prescribed mixed Weingarten curvature. J. Geom. Anal. \textbf{34} (2024).] from the Euclidean space to Riemannian warped products. Moreover, we show that interior second order a priori estimates for admissible solutions to the associated fully nonlinear elliptic partial differential equations can be readily established by similar arguments.
\end{abstract}
\maketitle
\setcounter{tocdepth}{1}
\tableofcontents

\section{Introduction}

Let $\sigma_k: \bR^n \to \bR$ denote the $k$-th elementary symmetric polynomial
\[\sigma_k(x_1,\ldots,x_n)=\sum_{1 \leq i_1<\cdots<i_k \leq n}x_{i_1}\cdots x_{i_k}.\] For a symmetric matrix $A$, we denote by $\lambda(A)=(\lambda_1,\ldots,\lambda_n)$ its eigenvalues and the notation $\sigma_{k}(A)$ shall be interpreted as the quantity $\sigma_k(\lambda_1,\ldots,\lambda_n)$. When $A=D^2u$ is the Hessian matrix of some $u \in C^2(\Omega) \cap C(\overline{\Omega})$, then $\sigma_1(D^2u)=\Delta u$ is the Laplacian and $\sigma_n(D^2u)=\det(D^2u)$ is the Monge-\Ampere\ operator. When $A=(h_{ij})$ is the second fundamental form of a hypersurface $\Sigma$, then its eigenvalues are the principal curvatures $\kappa[\Sigma]=(\kappa_1,\ldots,\kappa_n)$; in this case, $\sigma_1(\kappa[\Sigma])=\sum_{i=1}^{n} \kappa_i$ is the mean curvature, $\sigma_2(\kappa[\Sigma])=\sum_{i<j}\kappa_i\kappa_j$ is the scalar curvature, and $\sigma_n(\kappa[\Sigma])=\kappa_1\cdots\kappa_n$ is the Gauss curvature. In general, the quantities $\sigma_k(D^2u)$ and $\sigma_k(\kappa[\Sigma])$ are called the $k$-Hessian and the $k$-curvature, respectively.

Ever since the seminal work \cite{CNS-3,CNS-4,CNS-5} of Caffarelli-Nirenberg-Spruck, there has been a vast literature studying partial differential equations which involve the $\sigma_k$ operators. Indeed, besides the above simple examples, many problems in analysis and geometry often arise as some type of the $\sigma_k$ equation i.e. equations of the form $\sigma_k=f$, or the $(k,l)$-quotient equations $\sigma_k/\sigma_l=f$. Some notable recent examples include the problem of prescribing $k$-th curvature measure \cite{Guan-Li-Li, Yang}, which is important in the field of convex geometry; and variants of the Minkowski problem, see \cite{Guan-Guan}.

\subsection{Literature review and the main result}
\indent

In this note, we are concerned with the existence of starshaped compact hypersurfaces $\Sigma$ whose principal curvatures $\kappa[\Sigma]$ are prescribed as 
\begin{equation}
\frac{\sigma_k(\eta[\Sigma])}{\sigma_{l}(\eta[\Sigma])}=\Psi(X,\nu) \quad \text{for all $X \in \Sigma$}, \label{equation 1}
\end{equation} where $X$ is the position vector, $\nu$ is the outer unit normal, and $\eta$ is the $(0,2)$-tensor defined by
\[\eta[\Sigma]=Hg_{ij}-h_{ij}, \quad \text{$H$ is the mean curvature of $\Sigma$}\] i.e. the first Newton transformation \cite[page 466]{Reilly} of the second fundamental form $h_{ij}$ with respect to the induced metric $g_{ij}$. Our main result answers the existence problem as follows.
\begin{theorem} \label{existence}
Assume $0 \leq l<k <n$.
Let $(M,g')$ be a compact Riemannian manifold and let $I$ be an open interval in $\bR$. Consider a warped product manifold $\overline{M}=I \times M$ endowed with the metric $\overline{g}=dr^2+\phi(r)^2g'$ for some positive $C^2$ function $\phi: I \to \bR$ with $\phi'>0$. Suppose $\Psi \in C^2(\Gamma)$, where $\Gamma$ is an open neighborhood of the unit normal bundle of $\Sigma$ in $\overline{M} \times \bS^n$, satisfies the following conditions:
\begin{align*}
\Psi(V,\nu)&>\frac{\binom{n}{k}}{\binom{n}{l}}[(n-1)h(r)]^{k-l}, \quad r \leq r_1, \\
\Psi(V,\nu)&<\frac{\binom{n}{k}}{\binom{n}{l}}[(n-1)h(r)]^{k-l}, \quad r \geq r_2, \\
\text{and} \quad \frac{\partial}{\partial r}[&\phi(r)^{k-l}\Psi(V,\nu)] \leq 0, \quad r_1<r<r_2,
\end{align*} where $V=\phi(r)\frac{\partial}{\partial r}$ and $h(r)=\phi'(r)/\phi(r)$. Then there exists a unique $C^{4,\alpha}$, $(\eta,k)$-convex, starshaped compact hypersurface $\Sigma$ in $\{(r,z) \in \overline{M}: r_1 \leq r \leq r_2\}$, satisfying equation \eqref{equation 1} for any $\alpha \in (0,1)$.
\end{theorem}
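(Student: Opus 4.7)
The plan is to run the method of continuity on a suitable family of equations, with the standard reduction of the starshapedness assumption. I parameterize $\Sigma$ as a radial graph $X(z) = (\rho(z), z)$ over the compact slice $M$, and pass to the new radial variable $u$ via $u' = 1/\phi$; this converts \eqref{equation 1} into a fully nonlinear second-order PDE on $M$ of the form $F(\nabla^2 u, \nabla u, u) = \Psi$, with $F$ encoding the quotient $\sigma_k(\lambda(\eta))/\sigma_l(\lambda(\eta))$ after expressing the principal curvatures $\kappa[\Sigma]$ in terms of the derivatives of $\rho$ in the warped metric. The admissibility notion of $(\eta,k)$-convexity amounts to $\lambda(\eta) \in \Gamma_k$, the \Garding{} cone, and it is a classical fact that under this cone condition the operator $F$ is elliptic and concave after taking an appropriate power. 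I then deform $\Psi$ to a target $\Psi_0$ for which a slice $r \equiv r_0$ solves the equation, linking the two via a family $\Psi_t$ that preserves the sign/monotonicity hypotheses; openness of the solvability set is the implicit function theorem in $C^{4,\alpha}$, while closedness reduces to uniform $C^{4,\alpha}$ estimates along the path.

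For the a priori estimates I proceed in the usual order. The $C^0$ bound is handled by the two barrier conditions on $\Psi$ at $r_1$ and $r_2$: at an interior maximum of $\rho$, the hypersurface is tangentially sandwiched between a coordinate slice and so its principal curvatures are bounded above by $h(\rho) = \phi'/\phi$ (the curvatures of the slice), which forces $\sigma_k(\eta)/\sigma_l(\eta) \leq \binom{n}{k}/\binom{n}{l} [(n-1)h(\rho)]^{k-l}$ and contradicts the sign hypothesis at $r_2$; a symmetric argument at the minimum uses the hypothesis at $r_1$. The $C^1$ bound (equivalently, a gradient bound on $u$) follows by applying the maximum principle to $|\nabla u|^2 / 2 \pm $ a function of $u$, using the monotonicity $\partial_r[\phi^{k-l}\Psi] \leq 0$ precisely to kill the bad sign terms in the commutator, in the spirit of the warped-product argument from Part I.

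The heart of the matter, and what I expect to be the main obstacle, is the global $C^2$ (curvature) estimate. The eigenvalues appearing in the equation are those of the Newton tensor $\eta = Hg - h$, not of $h$ itself, so one loses the direct concavity of the operator in $h$ and must instead exploit that the map $h \mapsto (\sigma_k/\sigma_l)(\eta(h))^{1/(k-l)}$ is concave on the admissible cone (shown by rewriting as a composition of concave and linear operations). The plan is to apply the maximum principle to an auxiliary function of the form $W = \log \kappa_{\max} + \varphi(|X|^2) + \psi(\langle V, \nu\rangle)$, where $\kappa_{\max}$ is the largest principal curvature and $\varphi,\psi$ are carefully tuned smooth functions of the radial and support data; the warped-product structure contributes curvature terms involving $\phi''/\phi$ which must be absorbed by the concavity term. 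As is typical for quotient operators, one has to split into two cases depending on whether the ratio $\kappa_{\max}/\kappa_{\min}$ is large or bounded, and in the large-ratio case use a perturbation argument \`a la Li--Ren--Wang / Guan--Ren--Wang to separate eigenvalues and get usable third-order cancellation; the cone constant in the $\eta$-formulation modifies the standard computations at every step and this rebookkeeping is the real work.

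Once the $C^2$ estimate is in hand, the equation becomes uniformly elliptic on the solution set, and the quantitative concavity established above lets Evans--Krylov deliver a $C^{2,\alpha}$ estimate. Standard Schauder bootstrapping then gives $C^{4,\alpha}$, closing the continuity argument and producing a solution; uniqueness follows from the maximum principle applied to the difference of two solutions using the ellipticity of the linearized operator on the admissible cone.
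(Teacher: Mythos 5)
Your overall architecture (lower-order estimates, global curvature estimate, Evans--Krylov, bootstrapping) matches the paper's, which reduces everything to the global curvature estimate and cites \cite{Chen-Tu-Xiang-preprint} for the $C^0$ and $C^1$ bounds (the paper uses degree theory rather than the continuity method, but that is a secondary difference). The genuine gap is in your plan for the curvature estimate itself. You propose to handle the hard case by splitting according to whether $\kappa_{\max}/\kappa_{\min}$ is large and then running a Guan--Ren--Wang / Li--Ren--Wang style perturbation argument to separate eigenvalues and extract third-order cancellation. That is precisely the machinery that is known to break down in the critical case $k-l=1$ (which your theorem must cover, since it allows $l=k-1$): the crucial control in those arguments fails there, as the paper points out with reference to \cite{Chen-Tu-Xiang-JDE-2020}, and this failure is the entire reason the case $k-l=1$ was open. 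Re-bookkeeping the cone constants will not rescue it.

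What actually closes the argument is the Chen--Dong--Han key lemma (Lemma \ref{key lemma}): for $0 \leq l < k < n$ one has $F^{11} \geq c(n,k,l)\sum_i F^{ii} \geq c > 0$, where $F^{11}$ is the derivative of the operator with respect to the \emph{largest} principal curvature. This is special to the $\eta$-formulation (it uses $\sigma_{k-1}(\lambda|j)\,\sigma_l(\lambda|j)$ lower bounds and fails for $k=n$, which is why that case remains open). With it, the test function $\log\kappa_{\max} - \log(\tau - a)$ produces a good term $\frac{a}{\tau-a}\sum F^{ii}\kappa_i^2 \geq \frac{a}{\tau-a}F^{11}\kappa_1^2 \geq c\,a\,(\sum F^{ii})\kappa_1^2$ that dominates all bad terms $-C\sum F^{ii} - C\kappa_1$ outright; no case analysis and no third-order estimates beyond the basic concavity inequality (Lemma \ref{concavity}) are needed. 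Your proposal, as written, is missing this ingredient and substitutes an approach that does not work in the case the theorem is actually about. Separately, you should also account for the failure of the Codazzi property in a warped product ($h_{ijk} = h_{ikj} + \overline{R}_{0ijk}$), which enters both the commutator identity for $h_{ii11}$ and the cancellation between $\sum_k \frac{h_{11k}}{\kappa_1}(d_\nu\Psi)(E_k)$ and the support-function terms.
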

\begin{remark}
As we will soon review below, our theorem \ref{existence} extends recent work of Chen-Tu-Xiang \cite{Chen-Tu-Xiang-preprint} and Mei-Zhu \cite{Mei-Zhu}. A partial answer to the case when $k=n$, $l=n-1$ is also given below, while a full solution to the case remains still open.
\end{remark}
\begin{notation}
We take the convention that $\sigma_0=1$ and so when $l=0$, the curvature quotient \eqref{equation 1} would reduce to a $k$-curvature equation i.e.
\begin{equation}
\sigma_k(\eta[\Sigma])=\Psi(X,\nu). \label{equation 3}
\end{equation}
\end{notation}

\begin{remark}
A proof of purely interior curvature estimates for $k$-convex graphs over $B_r \subseteq \bR^n$ satisfying the following curvature quotient equation
\begin{equation}
\frac{\sigma_k}{\sigma_{k-1}}(\kappa[\Sigma])=\Psi(X), \quad 1 \leq k \leq n \label{curvature quotient}
\end{equation} is included in the Appendix; some might also be interested in this result (note that the input here is $\kappa[\Sigma]$, not $\eta[\Sigma]$).
\end{remark}

Motivations for studying this particular class of fully nonlinear elliptic equations i.e. \eqref{equation 1}, may come from several aspects; one major inspiration is from its close connections to problems in complex geometry. When $k=n$ and $l=0$, equation \eqref{equation 1} becomes a Monge-\Ampere\ type curvature equation
\[\det(\eta[\Sigma])=\Psi(X,\nu)\] which was intensively studied by Sha \cite{Sha-1, Sha-2}, Wu \cite{Wu} and Harvey-Lawson \cite{HL}, in order to understand the topology of manifolds with non-negative sectional curvature; see also \cite{HL-JDG, HL-Adv, HL-SDG}. In complex geometry, this equation is the so-called $(n-1)$-Monge-\Ampere\ equation, which is related to the Gauduchon conjecture \cite[Section IV.5]{Gauduchon}; the conjecture has recently been solved by Sz\'{e}kelyhidi, Tosatti and Weinkove in \cite{STW}; see also the work of Guan and Nie \cite{Guan-Nie}. For more references on this topic, the reader is referred to \cite{Fu-Wang-Wu-1, Fu-Wang-Wu-2, Popovici, TW-1, TW-2, Sze}. Another motivation is due to the study of Chern-Ricci forms; see \cite{Guan-Qiu-Yuan}.

It was then natural to study generalizations of these Monge-\Ampere\ type equations i.e. equation \eqref{equation 1}. Indeed, the problem has attracted much attention from several authors. Chu and Jiao \cite{Chu-Jiao} proved theorem \ref{existence} for the $k$-curvature equation \eqref{equation 3} in the Euclidean space $\bR^{n+1}$, and Chen-Tu-Xiang \cite{Chen-Tu-Xiang-JDE-2020} proved the same result for $(k,l)$-curvature quotients \eqref{equation 1} (still in $\bR^{n+1}$), provided that $k-l \geq 2$. Later, Zhou \cite{Zhou} extended the result of Chu-Jiao to space forms, and in a preprint \cite{Chen-Tu-Xiang-preprint}, Chen-Tu-Xiang further generalized the result for the quotient equation (which includes the $k$-curvature equation \eqref{equation 2} when $l=0$) to warped product manifolds under the same assumption of $k-l \geq 2$. 

The issue occurred when $k-l=1$, in which case a crucial control in their arguments would fall through; see e.g. \cite[(3.21)]{Chen-Tu-Xiang-JDE-2020}. Let us divide the remaining case i.e. $k-l=1$ into two sub-cases: 
\begin{align}
k&<n,\ l=k-1; \tag{A} \label{A}\\
k&=n,\ l=n-1. \tag{B} \label{B}
\end{align}
In a recent work \cite{Chen-Dong-Han}, Chen-Dong-Han derived a very convenient lemma which exploits the special structure of the equation operator, and Mei-Zhu \cite{Mei-Zhu} have applied this lemma to solve the case \ref{A} in $\bR^{n+1}$. The novelty of our theorem \ref{existence} is that we settled the case \ref{A} in warped product manifolds and thus extending both the work of Chen-Tu-Xiang \cite{Chen-Tu-Xiang-preprint} and Mei-Zhu \cite{Mei-Zhu}; one hurdle for generalizing from the Euclidean space to a warped product manifold is that the second fundamental form of the hypersurface $\Sigma$ in the warped product manifold $\overline{M}$ no longer satisfies the Codazzi property.

While the case \ref{B} remains still open, it is clear that the case could be resolved when the right-hand side function does not contain the gradient term i.e when $\Psi=\Psi(X)$ is independent of $\nu$, due to the presence of an extra positive curvature term in the interchanging formula for the fourth order term; see lemma \ref{commutator}.

\begin{theorem} \label{existence 2}
In the same setting as in theorem \ref{existence}, and under the additional assumption that $\Psi=\Psi(X)$ does not depend on $\nu$, the conclusion of theorem \ref{existence} holds true for $k=n,\ l=n-1$.
\end{theorem}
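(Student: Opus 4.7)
The plan is to follow the continuity method: connect the equation $\sigma_n(\eta[\Sigma])/\sigma_{n-1}(\eta[\Sigma])=\Psi(X)$ to a model equation through a one-parameter family and close the method with uniform a priori estimates. Once a global $C^{2}$ bound is established, Evans--Krylov combined with Schauder theory promotes it to a $C^{4,\alpha}$ bound, and a standard degree-theoretic argument delivers both existence and uniqueness. The $C^{0}$ and $C^{1}$ estimates are insensitive to the specific values of $k$ and $l$ and go through verbatim as in Theorem~\ref{existence}: the barrier hypotheses on $\Psi$ at $r=r_{1},r_{2}$ trap the radial function between two concentric spheres, and starshapedness then yields the gradient bound. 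Consequently, the entire task is to prove a global curvature estimate in the case $k=n,\ l=n-1$.

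For the curvature estimate, I would apply the maximum principle to an auxiliary function of the form $W=\log\kappa_{\max}+\Phi(r)$, where $\Phi$ is a carefully chosen function of $r$ alone, and compute at an interior maximum $X_{0}$ in an orthonormal frame diagonalizing $h_{ij}$ with $h_{11}=\kappa_{\max}$. Applying the linearized operator $F^{ij}=\partial(\sigma_{n}/\sigma_{n-1})/\partial\eta_{ij}$ to $W$ produces, besides standard terms that can be absorbed into the $\Phi$-contribution, a fourth-order piece $F^{ij}\nabla_{i}\nabla_{j}h_{11}$ which must be commuted into $F^{ij}\nabla_{1}\nabla_{1}h_{ij}$ before the equation can eliminate it. This is where Lemma~\ref{commutator} intervenes: in the warped product, the commutator of covariant derivatives on $h_{ij}$ inherits a contribution from the ambient Riemann tensor, and for the present quotient operator one verifies that this extra term carries the \emph{correct sign} and enters at precisely the order in $\kappa_{1}$ matching the bad cubic terms $F^{ij}h_{im}h_{mj}h_{11}$ produced by the Gauss equation, so it can absorb them. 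This is the positive-curvature bonus alluded to in the discussion preceding the theorem.

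The hypothesis $\Psi=\Psi(X)$ enters at one but decisive place. In the general case, differentiating the right-hand side produces a term $\Psi_{\nu^{\alpha}}\nabla_{i}\nu^{\alpha}=\Psi_{\nu^{\alpha}}h_{ip}e_{p}^{\alpha}$ which, after a second differentiation, generates a contribution \emph{quadratic} in $\kappa_{1}$; in the warped product the failure of the Codazzi identity for $h_{ij}$ prevents one from matching this with a favourable commutator term as in the easier regime $k-l\geq 2$, which is precisely why case~\ref{B} remains open for a general $\Psi(X,\nu)$. Removing the $\nu$-dependence eliminates this term outright, leaving only expressions linear in $\kappa_{1}$ together with $\nabla\Psi$, all within the reach of the positive curvature term from Lemma~\ref{commutator}. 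The main obstacle I anticipate is the delicate algebra required to verify the absorption: the eigenvalues $\lambda_{i}(\eta)=\sum_{j\neq i}\kappa_{j}$ couple nontrivially, and the weights $F^{ii}$ scale very unevenly in terms of $\kappa_{1}$, so keeping track of which terms survive the cancellation and calibrating $\Phi'(r)$ so that $F^{ij}\nabla_{i}\nabla_{j}r$ (whose sign is controlled by $\phi'>0$) dominates any residual linear contribution is where the care is needed. Once that is achieved, the maximum principle returns a bound $\kappa_{\max}(X_{0})\leq C$ depending only on $n$, $\phi$, $\Psi$, and the previously established $C^{0},C^{1}$ bounds, which is enough to close the continuity method.
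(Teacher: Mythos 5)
Your high-level plan (degree theory plus a maximum-principle curvature estimate, exploiting the commutator formula and the extra leverage from removing the $\nu$-dependence) is the right strategy, and your diagnosis of why a general $\Psi(X,\nu)$ is problematic is essentially correct. However, the mechanism you describe for closing the curvature estimate has two concrete problems, and because of them the argument as written would not close.

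First, the ``positive-curvature bonus'' you invoke is misattributed. It does not come from the ambient Riemann tensor $\overline{R}$ of the warped product (those pieces contribute only bounded terms of order $\kappa_1\sum F^{ii}$, which the paper simply estimates by $C\kappa_1\sum F^{ii}$ and absorbs). The useful positive term is the purely algebraic Gauss contribution $h_{11}^2 h_{ii}$ in Lemma~\ref{commutator}, which after contraction and division by $\kappa_1$ gives $\kappa_1 \sum_i F^{ii}\kappa_i = \kappa_1\Psi \geq C\kappa_1$. This term is present already in Euclidean space; the warped-product curvature plays no favourable role here.

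Second, and more seriously, the absorption you propose does not work quantitatively. The good term just identified is $\kappa_1\Psi = O(\kappa_1)$ after the division by $\kappa_1$, while the bad term from the Gauss part of the commutator is $-\sum_i F^{ii}\kappa_i^2$, which is generically $O(\kappa_1^2)$ for the $\sigma_n/\sigma_{n-1}$ operator (one cannot expect $F^{11}\kappa_1^2$ to be $O(\kappa_1)$: taking $\lambda_i$ all comparable shows $F^{11}\kappa_1 \to \infty$). So the positive commutator term cannot absorb the cubic bad term; it can only dominate \emph{bounded} terms, which is exactly how the paper uses it. What actually kills $-\sum F^{ii}\kappa_i^2$ in the paper is the Hessian of the \emph{support function} $\tau = \langle V,\nu\rangle$ in the test function $Q=\log\kappa_{\max}-\log(\tau-a)$: by Lemma~\ref{geometric formulas}, $\nabla^2_{ii}\tau$ contains $-\tau\kappa_i^2$, which after contraction produces the good term $\frac{\tau}{\tau-a}\sum F^{ii}\kappa_i^2$ and combines with the commutator piece to give $\frac{a}{\tau-a}\sum F^{ii}\kappa_i^2 \geq 0$. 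Your proposed auxiliary function $\log\kappa_{\max}+\Phi(r)$ with $\Phi$ a function of $r$ alone cannot reproduce this: for any such $\Phi$, the surface Hessian $\nabla^2_{ij}\Phi(r)$ is linear in the second fundamental form, never quadratic, so the $\kappa_i^2$ term is simply absent and the estimate cannot close. You must use the support function (or some $\nu$-dependent quantity with a $-\tau h_{ij}h_{jk}$ piece in its Hessian), not a function of $r$ alone. With that replacement, and with the cleaner bookkeeping above, the argument reduces to $0 \geq C\kappa_1 - C\sum F^{ii} - C \geq C\kappa_1 - C$ (using $\sum F^{ii} \leq C$ for the $\sigma_n/\sigma_{n-1}$ quotient), which gives the bound.
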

\begin{remark}
The major difficulty is how to solve the case \ref{B} when $\Psi=\Psi(X,\nu)$ depends on $\nu$ in general.
\end{remark}

Following \cite{Chen-Tu-Xiang-preprint, DBD, Jin-Li, Li-Sheng}, theorem \ref{existence} and theorem \ref{existence 2} will be proved using the standard degree theory \cite{Li}, for which we shall derive a prior estimates for admissible hypersurfaces. Since lower order estimates have already been established in \cite{Chen-Tu-Xiang-preprint}, here it is sufficient for us to obtain only the global curvature estimates. 
\begin{theorem}[global curvature estimates] \label{curvature estimate}
Let $0 \leq l<k < n$.
Suppose that $\Sigma$ is a closed star-shaped $(\eta,k)$-convex hypersurface in a warped product manifold $\overline{M}$, satisfying the curvature equation
\[\frac{\sigma_{k}(\eta[\Sigma])}{\sigma_{l}(\eta[\Sigma])}=\Psi(X,\nu) \quad \text{for all $X \in \Sigma$}\] for some positive $\Psi \in C^2(\Gamma)$, where $\Gamma$ is an open neighborhood of the unit normal bundle of $\Sigma$ in $\overline{M} \times \bS^n$. Then there exists some $C>0$ depending only on $n, k, l, \norm{\Sigma}_{C^2}, \inf \Psi, \norm{\Psi}_{C^2}$ and the curvature $\overline{R}$ of $\overline{M}$ such that 
\[\max_{X \in \Sigma}|\kappa_{\max}(X)| \leq C.\]

When $k=n$ and $l=n-1$, the same estimate holds if $\Psi=\Psi(X)$ is independent of $\nu$.
\end{theorem}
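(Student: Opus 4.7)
The plan is to bound $\kappa_1 := \kappa_{\max}$ via a maximum principle argument on the auxiliary function
\[
W \;=\; \log \kappa_1 + \tau\,\Phi(r) + \beta\log u,
\]
where $\Phi$ is a suitable primitive of the warping factor, $u = \langle V,\nu\rangle$ is the generalized support function on $\overline{M}$, and $\tau,\beta>0$ will be chosen large at the end. Let $F = \sigma_k(\eta)/\sigma_l(\eta)$, $L = F^{ij}\nabla_i\nabla_j$, and $\mathcal{F} = \sum_i F^{ii}$. At a maximum point $X_0$ of $W$, I would first reduce to the case $\kappa_1$ simple via a Gerhardt-Urbas perturbation and then diagonalize $h_{ij}(X_0)$ in an adapted local frame. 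Using that $V$ is conformal Killing together with the structural hypothesis $\partial_r[\phi^{k-l}\Psi]\le 0$ and the homogeneity of $F$, the standard warped-product calculus yields
\[
\tau\,L\Phi(r) + \beta\,L\log u \;\geq\; c_0\,\mathcal{F}\,\kappa_1 \;-\; C,
\]
with $c_0>0$; this is the ``good'' quantity we must dominate.

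Next, I would differentiate the curvature equation twice along $e_1$ to obtain an expression for $L\kappa_1$, using Lemma \ref{commutator} to commute the indices of $\nabla_i\nabla_i h_{11}$ into $\nabla_1\nabla_1 h_{ii}$; this generates ambient curvature contributions from $\overline{R}$ that account for the failure of the Codazzi identity in $\overline{M}$. Inserting into $LW(X_0)\leq 0$ and eliminating the third-order pieces via $\nabla W(X_0)=0$ yields, after rearrangement, a schematic inequality
\[
0 \;\geq\; -F^{ij,kl}(\eta)\,\nabla_1\eta_{ij}\,\nabla_1\eta_{kl} \;+\; \sum_i F^{ii}\Big(\tfrac{\nabla_i h_{11}}{h_{11}}\Big)^{\!2} \;+\; c_0\,\mathcal{F}\,\kappa_1 \;-\; C(1+\mathcal{F}).
\]
In the regime $k-l\ge 2$, the concavity of $\log(\sigma_k/\sigma_l)^{1/(k-l)}$ on the admissible cone, together with the standard index splitting $i=1$ versus $i\geq 2$, bounds the concavity defect by the gradient sum and closes the estimate. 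When $k-l=1$ and $k<n$ (case \ref{A}), this concavity argument is insufficient, and I would instead invoke the structural lemma of Chen-Dong-Han \cite{Chen-Dong-Han} tailored to $\sigma_k/\sigma_{k-1}$, which directly controls the concavity defect by $\sum_i F^{ii}(\nabla_1 h_{ii})^2$ up to acceptable error.

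For case \ref{B} ($k=n$, $l=n-1$) under the extra hypothesis $\Psi=\Psi(X)$, the twice-differentiated right-hand side contains no $\nabla\nu$ contribution, while the commutator in Lemma \ref{commutator} supplies an extra positive curvature term of size $c\,\kappa_1^2\,\mathcal{F}$ that substitutes for the missing gradient quantity and closes the argument. \textbf{Main obstacle.} The essential difficulty is handling the concavity defect in the borderline regime $k-l=1$: adapting both the Chen-Dong-Han identity and, for case \ref{B}, the commutator-curvature identity from Euclidean space or a space form to a genuine warped product requires careful bookkeeping of the ambient curvature terms introduced by the non-Codazzi property of $h$ in $\overline{M}$, and then calibrating $\tau,\beta$ in $W$ large enough to absorb those terms into the good quantity $c_0\,\mathcal{F}\,\kappa_1$.
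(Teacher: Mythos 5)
Your overall architecture (a maximum principle applied to $\log\kappa_1$ corrected by a support\mbox{-}function term, the commutator formula of Lemma \ref{commutator}, twice differentiating the equation, and the Chen--Dong--Han lemma to handle $k-l=1$) is the same as the paper's, but two of your key intermediate claims are false as stated, and the argument does not close. First, the inequality $\tau\,L\Phi+\beta\,L\log u\ge c_0\,\mathcal F\,\kappa_1-C$ cannot hold. By Lemma \ref{geometric formulas}, $L\Phi=\phi'\sum F^{ii}-u\sum F^{ii}h_{ii}=\phi'\sum F^{ii}-(k-l)u\Psi$, which is only of order $\sum F^{ii}$, not $\sum F^{ii}\,\kappa_1$; and $Lu$ contains $-u\sum F^{ii}\kappa_i^2$, so $\beta L\log u$ contributes a large \emph{negative} quadratic term $-\beta\sum F^{ii}\kappa_i^2$. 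Adding $+\beta\log u$ to the test function therefore has the wrong sign. The paper uses $Q=\log\kappa_1-\log(u-a)$ with $0<a<\tfrac12\inf u$: then $-F^{ii}\nabla^2_{ii}u/(u-a)$ contributes $+\frac{u}{u-a}\sum F^{ii}\kappa_i^2$, which after cancelling the $-\sum F^{ii}\kappa_i^2$ produced by the commutator leaves the true good term $\frac{a}{u-a}\sum F^{ii}\kappa_i^2$. This good term is \emph{quadratic} in $\kappa_1$ once one invokes Lemma \ref{key lemma} in the form $F^{11}\ge c(n,k,l)\sum F^{ii}$, giving $\sum F^{ii}\kappa_i^2\ge F^{11}\kappa_1^2\ge c\,\mathcal F\,\kappa_1^2$; a merely linear good term $c_0\mathcal F\kappa_1$ could not absorb the linear bad term $-C\kappa_1\ge-C'\mathcal F\kappa_1$ coming from the concavity defect, since nothing relates $c_0$ to $C'$. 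Relatedly, you misdescribe the role of the lemma of Chen--Dong--Han \cite{Chen-Dong-Han}: it is not a bound of the concavity defect by $\sum_iF^{ii}(\nabla_1h_{ii})^2$. In this paper the defect $-G^{ij,rs}\eta_{ij1}\eta_{rs1}$ is estimated crudely by Lemma \ref{concavity} (yielding $-C\kappa_1^2$, hence $-C\kappa_1$ after division by $\kappa_1$), and the sole function of the key lemma is the trace inequality $F^{11}\ge c\sum F^{ii}\ge c'>0$.

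Second, for $\nu$-dependent $\Psi$ you do not address the one genuinely delicate third-order term: differentiating the equation twice produces $\sum_i h_{11i}(d_\nu\Psi)(E_i)/\kappa_1$, which is not a priori bounded, and ``eliminating the third-order pieces via $\nabla W(X_0)=0$'' is not an argument here. In the paper this term is cancelled exactly against $-F_k\Phi_k/(u-a)$ by combining the critical equation $h_{11k}/\kappa_1=u_k/(u-a)$ with $u_k=\sum_j\Phi_jh_{jk}$ and the once-differentiated equation; this cancellation is precisely why the support function (with the correct sign) must appear in the test function. Your treatment of case (B) ($k=n$, $l=n-1$, $\Psi=\Psi(X)$) is correct in spirit: the extra positive term $\kappa_1^2\sum F^{ii}\kappa_i=\kappa_1^2\Psi$ retained from Lemma \ref{commutator}, together with $\sum F^{ii}\le C$ for the $(n,n-1)$ quotient, is exactly what closes that case in the paper.
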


\begin{remark}
In contrast, when $1 \leq l<k \leq n$, Guan, Ren and Wang \cite{Guan-Ren-Wang} have demonstrated that the usual curvature quotient equations of principal curvatures
\[\frac{\sigma_k(\kappa[\Sigma])}{\sigma_l(\kappa[\Sigma])}=\Psi(X,\nu)\] do not admit such a curvature estimate even for strictly convex solutions; see their theorem 1.2.
\end{remark}

\subsection{Secondary results}
\indent

Moreover, as demonstrated in \cite{Guan-Ren-Wang, Chu-Jiao}, the arguments leading to curvature estimates for a prescribed curvature equation $F(\kappa[\Sigma])=\Psi$ can readily be adapted to produce $C^2$ estimates for the Hessian type equation $F(D^2u)=f$ in the same form; for our curvature equation \eqref{equation 1}, this is still the case. Consider the following Hessian type equation in the same form:
\begin{equation}
\frac{\sigma_k(\eta[u])}{\sigma_{l}(\eta[u])}=f(x,u, Du), \label{equation 2}
\end{equation} where
\[\eta[u]=(\Delta u) I - D^2u, \quad \text{$I$ is the identity matrix}.\] We have
\begin{theorem}[global $C^2$ estimates]
Let $0 \leq l<k<n$.
Suppose that $u \in C^4(\Omega) \cap C^2(\overline{\Omega})$ is a $(\eta,k)$-convex solution of 
\begin{alignat*}{2}
\frac{\sigma_k(\eta[u])}{\sigma_{l}(\eta[u])}&=f(x,u, Du) &\quad &\text{in $\Omega$,} \\
u&=\varphi &\quad &\text{on $\partial \Omega$,}
\end{alignat*} 
then there exists some $C>0$ depending only on $n, k, l, \norm{u}_{C^1}, \inf f, \norm{f}_{C^2}$ and $\Omega$ such that 
\[\sup_{\Omega} |D^2u| \leq C(1+\sup_{\partial \Omega} |D^2u|).\]

If additionally, the Dirichlet problem has a $C^3(\overline{\Omega})$ subsolution and $f_{u} \geq 0$, then for all $0 < \alpha < 1$ it admits a unique $C^{3,\alpha}$ solution $u$.
\end{theorem}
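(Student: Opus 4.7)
The plan is to adapt, in a simpler setting, the maximum principle argument behind Theorem~\ref{curvature estimate}. Since the ambient space is now flat $\bR^n$ rather than a warped product, no ambient curvature terms appear and the Codazzi-type defects that complicated the hypersurface case are absent ($u_{ijk}$ is fully symmetric in the Euclidean setting). I would work with the concave operator $F(W) = \log\big(\sigma_k(W)/\sigma_l(W)\big)$ on the $(\eta,k)$-convex cone, whose linearization $F^{ij}$ is positive definite, and rewrite the equation as $F(\eta[u]) = \log f(x,u,Du)$. Via $\eta_{ij} = (\Delta u)\delta_{ij} - u_{ij}$, the linearization acts on a test function $v$ as $Lv := F^{ij}\big((\Delta v)\delta_{ij} - v_{ij}\big)$. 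Note that a bound on the largest eigenvalue $P$ of $\eta[u]$ at an interior point already gives a bound on $|D^2 u|$, since $(\eta,k)$-convexity forces each $\eta_i>0$ and hence $(n-1)\Delta u = \tr(\eta) \leq nP$.

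I would then set up the standard test function
\[\Phi(x,\xi) = \log \langle \eta[u]\xi,\xi\rangle + \varphi(|Du|^2) + \psi(u), \qquad (x,\xi) \in \overline{\Omega} \times \bS^{n-1},\]
with $\varphi, \psi$ suitable convex functions (e.g.\ $\varphi(s) = -\tfrac{1}{2}\log(2M^2-s)$ for $M \geq \sup|Du|$ and $\psi(u) = Au$ for large $A > 0$). If its maximum over $\overline{\Omega}\times\bS^{n-1}$ is attained at an interior point $x_0$ in direction $\xi=e_1$, I would rotate coordinates so that $D^2 u(x_0)$ is diagonal, which also diagonalizes $\eta[u](x_0)$, and aim to control $P := \eta_{11}(x_0)$. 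Differentiating the equation once and twice in the direction $e_1$, and using $\eta_{ij,11} = (\Delta u_{11})\delta_{ij} - u_{ij,11}$, transfers information into $LP$; combined with the first- and second-order conditions for $\Phi$ at $x_0$ and the concavity of $F$, this yields the master inequality, in which all residual first- and zeroth-order terms are controlled by $\norm{u}_{C^1}, \norm{f}_{C^2}$, and $\inf f$.

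The hard part, exactly as in the hypersurface setting, is the third-order bad term $\sum_i F^{ii}\eta_{11,i}^2/P^2$. When $k - l \geq 2$, it is absorbed by the concavity $F^{ij,rs}\eta_{ij,1}\eta_{rs,1} \leq 0$ together with the standard decomposition of \cite{Chen-Tu-Xiang-JDE-2020}; in the borderline regime $k - l = 1$ that decomposition breaks, and I would invoke the Chen--Dong--Han lemma \cite{Chen-Dong-Han} exactly as in \cite{Mei-Zhu} and in the proof of Theorem~\ref{curvature estimate}, to bound the bad term by $C\sum_i F^{ii}\eta_{ii}^2/P$ plus admissible errors. These are then swallowed by the convex good term $\varphi''\sum_i F^{ii}\big((|Du|^2)_{,i}\big)^2$ and by the term $A\sum_i F^{ii}$ coming from $\psi$, provided $A$ is chosen large. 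One concludes $P(x_0) \leq C$, which yields the reduction $\sup_\Omega |D^2 u| \leq C\big(1 + \sup_{\partial\Omega}|D^2 u|\big)$.

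For the existence statement, I would run the usual continuity/degree-theory scheme of \cite{Li}. The assumption $f_u \geq 0$ delivers uniqueness and $C^0$ bounds via the maximum principle; the $C^1$ bound is standard; the $C^3$ subsolution produces Caffarelli--Nirenberg--Spruck-type barriers giving boundary $C^2$ estimates, which combined with the global reduction above yields a full $C^2$ bound. Evans--Krylov then upgrades this to $C^{2,\alpha}$, a Schauder bootstrap to $C^{3,\alpha}$, and degree theory along a suitable deformation path closes the argument.
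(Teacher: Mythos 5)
Your overall plan is the right one and matches what the paper intends (it omits this proof, pointing to the curvature-estimate argument of Section~4 and the Pogorelov argument of Section~5). The test function $\log\kappa_{\max}(D^2u)+\tfrac{a}{2}|Du|^2$, the concavity bound $-G^{pq,rs}\eta_{pq1}\eta_{rs1}\geq -C\kappa_1^2$, and the Chen--Dong--Han lemma at the end are exactly the ingredients the paper uses. However, two points deserve correction.

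First, the reduction step is mis-justified: for $0\leq l<k<n$ the cone $\Gamma_k$ with $k<n$ does \emph{not} force every eigenvalue $\eta_i$ of $\eta[u]$ to be positive (only $\lambda_{n-k+1}>0$ is guaranteed by Lemma~\ref{reverse}). The inequality $\tr\eta\leq nP$ holds anyway since each $\eta_i\leq P$, and one then bounds $u_{ii}=\Delta u-\eta_i$ from both sides using $\Delta u>0$ (from $\Gamma_k\subseteq\Gamma_1$) together with $\eta_i\geq (n-1)\Delta u-(n-1)P\geq-(n-1)P$. Your conclusion is correct, but the stated reason is wrong.

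Second, you locate the role of Lemma~\ref{key lemma} imprecisely. The third-order term $\sum F^{ii}\eta_{11,i}^2/P^2$ is not where the lemma enters; in the paper that term is eliminated by substituting the first-order critical equation, which converts it into $C\sum F^{ii}u_{ii}^2$ plus bounded terms that are absorbed by the good term for small $a$. The Chen--Dong--Han lemma is instead applied to the \emph{good} term: $\sum F^{ii}u_{ii}^2\geq F^{11}\kappa_1^2\geq c_0\bigl(\sum F^{ii}\bigr)\kappa_1^2$, and simultaneously $\sum F^{ii}\geq c>0$ lets the isolated bad term $-C\kappa_1$ (coming from $f_{u_i}u_{11i}$ in $f_{11}$) be written as $-C'\bigl(\sum F^{ii}\bigr)\kappa_1$, after which the good term dominates for $\kappa_1$ large --- this is precisely Lemma~\ref{key control}. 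Finally, tracking $\lambda_{\max}(\eta[u])$ instead of $\kappa_{\max}(D^2u)$ is a defensible variant since $\lambda_n/(n-1)\leq\kappa_1\leq(n-1)\lambda_n$ on $\Gamma_k\subseteq\Gamma_1$, but it is notationally heavier (the fourth-order term $\sum F^{ii}\eta_{11,ii}$ involves a double sum over $j$ in $\sum_j F^{ii}u_{jjii}$, requiring the twice-differentiated equation in every direction); the paper's choice of $\kappa_{\max}(D^2u)$ is cleaner and aligns with the precise form of the key lemma.
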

\begin{remark}
The proof for this theorem will be omitted, as it is almost identical to that of theorem \ref{curvature estimate}. 
\end{remark}

In fact, we can do better and derive a Pogorelov type interior $C^2$ estimate for equation \eqref{equation 2}.
\begin{theorem}[Pogorelov type interior $C^2$ estimates]\label{Pogorelov estimate}
Let $1 \leq k < n$ and $0 \leq l \leq k-1$.
Suppose that $u \in C^4(\Omega) \cap C^2(\overline{\Omega})$ is a $(\eta,k)$-convex solution to the following Dirichlet problem
\begin{alignat*}{2}
\frac{\sigma_k(\eta[u])}{\sigma_{l}(\eta[u])}&=f(x,u, Du) &\quad &\text{in $\Omega$,} \\
u&=0 &\quad &\text{on $\partial \Omega$.}
\end{alignat*}
Then for every $\beta>0$, there exists some $C>0$ depending only on $n,k,l,\norm{u}_{C^1}, \inf f$ and $\norm{f}_{C^2}$ such that
\[\sup_{\Omega} [\ (-u)^{\beta} |D^2u|\ ] \leq C.\]
\end{theorem}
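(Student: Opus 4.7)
The plan is a Pogorelov-type maximum principle applied to the auxiliary function
\[
M(x) := \beta\log(-u(x)) + \log\mu_1(x) + \tfrac{a}{2}|Du(x)|^2,
\]
where $\mu_1$ denotes the largest eigenvalue of $\eta[u]$ and $a>0$ is a constant to be chosen. Admissibility gives $(n-1)\Delta u = \mathrm{tr}(\eta[u])>0$ in $\Omega$, so the strong maximum principle together with $u|_{\partial\Omega}=0$ yields $u<0$ inside $\Omega$. The boundary behavior $\beta\log(-u)\to-\infty$ then forces the supremum of $M$ to be attained at an interior point $x_0\in\Omega$, and a bound $M(x_0)\leq C$ translates into $(-u)^{\beta}|D^2u|\leq C'$ via the comparability of $|D^2u|$ with $\mu_1$ on $\Gamma_k$ for $\inf f>0$.

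At $x_0$ I rotate coordinates to diagonalize $D^2u$ (and simultaneously $\eta[u]$), so that $u_{ij}=\lambda_i\delta_{ij}$ and $\eta_{ij}=\mu_i\delta_{ij}$ with $\mu_1\geq\cdots\geq\mu_n$. After a standard perturbation I may assume $\mu_1$ is a simple eigenvalue and replace $\log\mu_1$ locally by the smooth scalar $\log\eta_{11}(x)$, which touches $\log\mu_1(x)$ from below at $x_0$. The linearization of $u\mapsto\log(\sigma_k(\eta[u])/\sigma_l(\eta[u]))$ with respect to $D^2u$ is $G^{ij}=\mathcal{F}\delta_{ij}-F^{ij}$, where $F^{ij}=\partial\log(\sigma_k/\sigma_l)/\partial\eta_{ij}$ and $\mathcal{F}=\sum_p F^{pp}$; it is positive definite on $\Gamma_k$ by admissibility. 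The first-order critical condition $\partial_iM(x_0)=0$ reads $\eta_{11,i}/\mu_1 = -\beta u_i/u - a u_i\lambda_i$, and the key identity
\[
G^{ii}\eta_{11,ii} = \sum_{p\neq 1}\bigl(F_{;pp} - G^{ij,rs}u_{ij,p}u_{rs,p}\bigr)
\]
follows by differentiating $\log(\sigma_k(\eta[u])/\sigma_l(\eta[u])) = \log f(x,u,Du)$ twice in each coordinate direction $x_p$, commuting partials at $x_0$, and using $\eta_{11,pp} = \partial_p^2(\Delta u - u_{11})$. The concavity of $\log(\sigma_k/\sigma_l)$ on $\Gamma_k$ gives $-G^{ij,rs}u_{ij,p}u_{rs,p}\geq 0$, a nonnegative quadratic form in $D^3u$; and the third-derivative terms inside $F_{;pp}$ arising from the $Du$-dependence of $f$ collapse to $(\log f)_{u_q}\eta_{11,q}$ via $\sum_{p\neq 1}u_{qpp} = \eta_{11,q}$, which is then controlled by the first-order condition.

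Plugging all this into $G^{ii}M_{ii}(x_0)\leq 0$, together with the Euler-type identity $\sum_i G^{ii}\lambda_i = k-l$ (from the $(k-l)$-homogeneity of $\sigma_k/\sigma_l$) to evaluate $\beta G^{ii}u_{ii}/u$, and substituting the critical condition to bound $-G^{ii}(\eta_{11,i})^2/\mu_1^2 \geq -2\beta^2 G^{ii}u_i^2/u^2 - 2a^2 G^{ii}u_i^2\lambda_i^2$, I choose $a$ small enough that the positive contribution $aG^{ii}\lambda_i^2$ absorbs the $a^2$-leftovers and produces the required bound $\mu_1(x_0)\leq C$. The principal obstacle is the borderline case $k-l=1$: here the concavity quadratic form $-G^{ij,rs}u_{ij,p}u_{rs,p}$ provides only barely enough positivity, and generic Cauchy-Schwarz absorption of the bad gradient and cross terms fails. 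In that regime I anticipate needing a structural inequality of the type developed by Chen-Dong-Han \cite{Chen-Dong-Han} (already invoked in the proof of Theorem \ref{curvature estimate}) that exploits the specific form of the quotient $\sigma_k/\sigma_{k-1}$ to extract the missing positive third-derivative contribution and close the estimate.
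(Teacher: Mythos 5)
Your scheme is essentially the paper's: the same Pogorelov test function $\beta\log(-u)+\log(\text{largest eigenvalue})+\tfrac{a}{2}|Du|^2$, differentiation of the equation twice, concavity to dispose of the $G^{ij,rs}$ term, and the first-order critical relation to control the third-order and $Du$-dependent terms. (The paper puts $\log\kappa_{\max}(D^2u)$ rather than $\log\mu_1(\eta[u])$ in the test function and works with $F=f$ rather than $\log F=\log f$, but these are cosmetic; your eigenvalue bookkeeping is consistent.)

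The one genuine gap is the closing step, which you leave as an anticipation and also mischaracterize. After all absorptions the inequality has the shape
\[
0\ \geq\ \frac{a}{2}\sum_i F^{ii}u_{ii}^2\ -\ C\,u_{11}\ -\ C\Big(1+\tfrac{1}{(-u)^2}\Big)\sum_i F^{ii},
\]
and the point is that $\sum_i F^{ii}u_{ii}^2$ by itself need not dominate $C\,u_{11}$: the largest Hessian eigenvalue could a priori sit in a near-null direction of the linearized operator. What rescues this is not a ``missing positive third-derivative contribution'' extracted from the structure of $\sigma_k/\sigma_{k-1}$, but the purely algebraic Lemma \ref{key lemma} (the Chen--Dong--Han inequality, extended in Section \ref{the key lemma}): for $0\leq l<k<n$ one has $F^{11}\geq c(n,k,l)\sum_i F^{ii}\geq c'>0$, where $F^{11}$ is the derivative in the direction of the \emph{largest} Hessian eigenvalue. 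This yields $\sum_i F^{ii}u_{ii}^2\geq c\,(\sum_i F^{ii})\,u_{11}^2$ (Lemma \ref{key control}), after which dividing by $\sum F^{ii}$ finishes the proof. Two consequences you should note: the lemma is used uniformly for \emph{all} $0\leq l<k<n$, not only for the borderline $k-l=1$ (and it is precisely what permits the exponent $\beta>0$ to be arbitrary, whereas the earlier Cauchy--Schwarz absorptions of Chu--Jiao and Chen--Tu--Xiang required $\beta$ large); and it fails for $k=n$, which is why that case remains open. With Lemma \ref{key lemma} inserted at your final step, your argument closes exactly as in the paper.
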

\begin{remark}
Theorem \ref{Pogorelov estimate} was proved by Chu-Jiao \cite{Chu-Jiao} when $l=0$, and by Chen-Tu-Xiang \cite{Chen-Tu-Xiang-JDE-2021} when $k-l \geq 2$. Here our new contributions are not only that we are able to obtain the estimate in the case \ref{A} i.e. when $k-l=1$ and $k<n$, and also the exponent $\beta>0$ in our estimate can be arbitrary which holds for all $0 \leq l<k<n$. Previously in \cite{Chu-Jiao} and \cite{Chen-Tu-Xiang-JDE-2021}, the exponent $\beta$ had to be some large number. 
\end{remark}

\begin{remark}
Just like the global $C^2$ estimates, the Pogorelov type interior estimates remain still open for $k=n$, $l=n-1$.
\end{remark}

\begin{remark}
Furthermore, as pointed out by Guan-Ren-Wang \cite[Theorem 3.6]{Guan-Ren-Wang}, with the assumption of $C^1$ boundedness, we could settle an Ivochkina \cite{Ivochkina-1, Ivochkina-2, Lin-Trudinger, Ivochkina-Lin-Trudinger} type regularity problem. In fact, for this particular class of equations, we could even tackle the degenerate case; see the recent work of Jiao-Sun \cite{Jiao-Sun} and Chen-Tu-Xiang \cite{Chen-Tu-Xiang-degenerate}.
\end{remark}

Next, we consider the purely interior $C^2$ estimate for equation \eqref{equation 2}; such an estimate would usually not follow from the proof of the global $C^2$ estimates, but due to the special structure of the equation, Mei \cite{Mei} was able to obtain the estimate when $0 \leq l \leq k-2$ and $k<n$ by following the arguments of Chu-Jiao \cite{Chu-Jiao} and Chen-Tu-Xiang \cite{Chen-Tu-Xiang-JDE-2021}; he also constructed non-classical solutions when $k=n$ and $0 \leq l \leq n-3$. The remaining cases for the purely interior estimate are (1) $k-l=1$ and $k<n$; (2) $k=n$ and $l=n-1,n-2$. Here we tackle the former case.

\begin{theorem}[purely interior $C^2$ estimates]\label{interior C^2 estimates}
Let $1 \leq k < n$ and $0 \leq l \leq k-1$. Suppose that $u \in C^{\infty}(B_1)$ is an $(\eta,k)$-convex solution i.e. $\lambda(\eta[u]) \in \Gamma_k$ of the Hessian quotient equation
\[\frac{\sigma_{k}(\eta[u])}{\sigma_{l}(\eta[u])}=f(x,u,Du) \quad x \in B_1.\] Then there exists some $C>0$ depending only on $n,k,l, \inf f, \norm{f}_{C^2(B_1 \times \bR \times \bR^n)}$ and $\norm{u}_{C^1(B_1)}$ such that 
\[\sup_{B_{\frac{1}{2}}} |D^2u| \leq C.\]
\end{theorem}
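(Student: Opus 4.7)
The plan is to adapt the Pogorelov-style argument used for theorem \ref{Pogorelov estimate} by replacing the boundary-vanishing factor $(-u)^\beta$ with a smooth interior cutoff. On $B_1$, I would consider the auxiliary function
\[
W(x) = \gamma \log \rho(x) + \log \mu_1(\eta[u])(x) + \phi(|Du|^2),
\]
where $\rho(x) = 1 - |x|^2$, $\mu_1$ denotes the largest eigenvalue of $\eta[u]$, $\phi$ is an appropriate increasing Chou-Wang-type function of $|Du|^2$, and $\gamma > 0$ is to be chosen. Since controlling $\mu_1(\eta[u])$ together with $\norm{u}_{C^1}$ controls $|D^2u|$ - indeed $u_{ii} = \tr(\eta[u])/(n-1) - \eta_{ii}$, and for $\lambda(\eta[u]) \in \Gamma_k$ all eigenvalues of $\eta[u]$ are controlled by $\mu_1$ via Newton-Maclaurin together with $\inf f > 0$ - a pointwise bound on $\mu_1$ at the maximum point of $W$ will suffice.

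Assuming $W$ attains its maximum at an interior point $x_0 \in B_1$, I would work in an orthonormal frame diagonalizing $D^2u(x_0)$ (hence also $\eta[u]$) with $\eta_{11} = \mu_1$. After the standard perturbation making $\mu_1$ smooth near $x_0$, the critical-point equations $W_i(x_0) = 0$ solve for the third derivatives $\eta_{11,i}$ in terms of $\rho_i/\rho$ and $\phi'(|Du|^2)(|Du|^2)_i$; the second-order inequality $F^{pp}W_{pp}(x_0) \le 0$, where $F^{pq} = \partial \log(\sigma_k/\sigma_l)/\partial \eta_{pq}$ is the linearization, then yields - after commuting derivatives and differentiating the equation twice - a differential inequality involving concavity of $\log(\sigma_k/\sigma_l)^{1/(k-l)}$, positive contributions from $\phi$ and from $\gamma\log\rho$, and ``bad'' third-order terms of the form $F^{pp}(\eta_{11,p})^2/\eta_{11}^2$ that must be absorbed.

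The main obstacle, precisely as in theorems \ref{existence} and \ref{Pogorelov estimate}, is the case $k-l=1$ with $k<n$: here the inverse-concavity inequality for $(\sigma_k/\sigma_{l})^{1/(k-l)}$ degenerates and by itself produces no usable negative third-order term. To close the estimate I would invoke the algebraic lemma of Chen-Dong-Han, already exploited by Mei-Zhu in the Euclidean global setting and reused in the proof of theorem \ref{existence}, which harnesses the special structure of $\sigma_k/\sigma_{k-1}$ to produce an extra negative term of the shape $\sum_{p>1} F^{pp}(\eta_{1p,1})^2/\eta_{11}$. Combined with the Chou-Wang contribution from $\phi(|Du|^2)$ and a case split on whether $F^{11}$ is small or comparable to $\sum_p F^{pp}$, this absorbs the bad terms; lower order terms involving $f, Df, D^2f$ are controlled via $\inf f$, $\norm{f}_{C^2}$ and $\norm{u}_{C^1}$. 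Propagating the resulting pointwise bound $W(x_0) \le C$ gives $\rho^\gamma \mu_1 \le C$ on $B_1$ and hence $\sup_{B_{1/2}}|D^2u| \le C$. The only substantive new step is confirming that the Chen-Dong-Han lemma integrates cleanly with the smooth interior cutoff rather than with the Pogorelov-style $(-u)^\beta$ factor; this is expected to go through because the lemma is a pointwise algebraic identity independent of which weight function one chooses to couple it with.
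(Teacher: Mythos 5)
Your overall architecture matches the paper's: a test function of the form (cutoff weight) $+\log(\text{largest curvature}) + (\text{gradient term})$, critical-point analysis at an interior maximum, twice-differentiating the equation, concavity of $G$, and the Chen--Dong--Han lemma to close. The paper uses $2\log\rho + \log u_{11} + \tfrac{a}{2}|Du|^2$ with $\rho=1-|x|^2$; your choice of $\log\mu_1(\eta[u])$ in place of $\log\kappa_1$ and a Chou--Wang function $\phi(|Du|^2)$ in place of $\tfrac{a}{2}|Du|^2$ are harmless variants.

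However, you have misdescribed the content and the role of the Chen--Dong--Han lemma, and this is precisely the crux of the argument. The lemma is not a producer of a negative third-order term of the shape $\sum_{p>1}F^{pp}(\eta_{1p,1})^2/\eta_{11}$; it is the purely algebraic inequality $F^{11}\geq c(n,k,l)\sum_i F^{ii}$ for $0\leq l<k<n$, i.e.\ the \emph{smallest} diagonal entry of the linearized operator is uniformly comparable to its trace. Its sole role is to upgrade the good quadratic term: $a\sum F^{ii}u_{ii}^2 \geq aF^{11}u_{11}^2 \geq a\,c_0\bigl(\sum F^{ii}\bigr)u_{11}^2$, which then dominates every bad term, since after using $\sum F^{ii}\geq c(n,k,l,\inf f)$ all bad terms are of the form $C(u_{11}+\rho^{-1}+\rho^{-2})\sum F^{ii}$. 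In particular your proposed ``case split on whether $F^{11}$ is small or comparable to $\sum_p F^{pp}$'' is vacuous (the lemma says the first case never occurs when $k<n$), and the Chou--Wang function is unnecessary: the bad third-order term $\sum F^{ii}u_{11i}^2/u_{11}^2$ is killed simply by substituting the first-order critical equation, which gives $u_{11i}^2/u_{11}^2\leq C\rho^{-2}+Ca^2u_{ii}^2$, and absorbing the second piece into $a\sum F^{ii}u_{ii}^2$ for $a$ small. Likewise your worry about degenerate inverse concavity when $k-l=1$ is misplaced here: the paper only needs plain concavity of $G$ together with the once-differentiated equation to bound $-G^{pq,rs}\eta_{pq1}\eta_{rs1}\geq -Cu_{11}^2$, which contributes only $-Cu_{11}$ after division by $u_{11}$. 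As written, an attempt to execute your plan would stall looking for a negative third-order term that the lemma does not supply; with the lemma used in its actual form the proof closes exactly as in the paper.
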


\begin{remark}
Previously, the estimate for $k-l=1$ and $k<n$ was proved by Chen-Dong-Han \cite{Chen-Dong-Han} for $f=f(x)$ and by Mei \cite{Mei} for $f=f(x,u)$; here we solve the case when $f=f(x,u,Du)$, but our proof follows closely theirs and it works for all $0 \leq l < k < n$. The estimate for $k=n$ and $l=n-1,n-2$ remains still open.   
\end{remark}

Similarly, we can derive an interior curvature estimate.

\begin{theorem}
Let $0 \leq l < k < n$. Suppose that $\Sigma$ is an $(\eta,k)$-convex graph over $B_r$, satisfying
\[\frac{\sigma_k(\eta[\Sigma])}{\sigma_l(\eta[\Sigma])}=f(X,\nu), \quad X\in B_r \times \bR.\] Then there exists some $C>0$ depending on $n, k, l, r, \inf f, \norm{\Sigma}_{C^1(B_r)}$ and $\norm{f}_{C^2(B_r)}$ such that
\[\sup_{x \in B_{r/2}} |\kappa_{i}(x)| \leq C.\]
\end{theorem}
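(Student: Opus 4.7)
The strategy is to combine the interior argument of Theorem~\ref{curvature estimate} with a cutoff that vanishes on $\partial B_r$. Writing $\Sigma$ as the graph $\{(x, u(x)) : x \in B_r\}$, let $v = \sqrt{1+|Du|^2}$ be the angle function and pick a cutoff $\rho(x) = 1 - |x|^2/r^2$ that is positive on $B_r$ and vanishes on $\partial B_r$. I would consider a test function of the form
\[
W(x,\xi) = \log\bigl(h(\xi,\xi)\bigr) + \alpha\,\phi(v) + q\log\rho,
\]
maximized over $x \in B_r$ and unit tangent vectors $\xi$ at $(x,u(x))$, where $\phi$ is a suitable auxiliary function and $\alpha,q > 0$ are constants to be chosen. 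Vanishing of $\rho$ on $\partial B_r$ forces the maximum to be attained at some interior point $x_0$ with direction $\xi_0$, and after a rotation one has $(h_{ij})$ diagonal at $x_0$ with $h_{11} = \kappa_{\max}(x_0)$ and $\xi_0 = e_1$.

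At $x_0$, the first order conditions $W_i = 0$ together with $F^{ii} W_{ii} \leq 0$, where $F^{ii} = \partial(\sigma_k/\sigma_l)/\partial \eta_{ii}$ denotes the linearized operator evaluated at $\eta[\Sigma]$, give the working inequality. Differentiating the equation $F(\eta[\Sigma]) = f(X,\nu)$ twice in the $e_1$ direction and using the Codazzi identity (available since the ambient is $\bR^{n+1}$) along with the Gauss--Weingarten formulae reduces $F^{ii}(h_{11})_{ii}$ to controllable lower order terms, a contribution from the right-hand side $f$, and a ``bad'' third-order sum of the form $F^{pq,rs} h_{pq;1} h_{rs;1}$. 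These bad terms are absorbed by the concavity of $(\sigma_k/\sigma_l)^{1/(k-l)}$ on $\Gamma_k$ when $k-l \geq 2$, and by the Chen--Dong--Han lemma already invoked for Theorem~\ref{curvature estimate} when $k-l = 1$; this covers all $0 \leq l < k < n$ uniformly.

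The new feature compared with the closed case is the cutoff contribution $qF^{ii}\rho_{ii}/\rho - q^2 F^{ii}\rho_i^2/\rho^2$, and the main obstacle is to absorb the quadratic blow-up $q^2 F^{ii}\rho_i^2/\rho^2$. I would use $W_i = 0$ to rewrite $(\log h_{11})_i = -\alpha\phi'(v)v_i - q\rho_i/\rho$, so that the cutoff singularity is paired against the good concentration term $F^{ii}(h_{11})_i^2 / h_{11}^2$ produced when differentiating $\log h_{11}$ twice; the resulting cross terms are handled by Cauchy--Schwarz splitting. Choosing $\alpha$ large enough that $\alpha\phi(v)$ generates a dominant good term of the form $\varepsilon \sum_i F^{ii} h_{ii}^2$, and then choosing $q$ large relative to $\alpha$, one arrives at a bound $\rho(x_0)^{q} h_{11}(x_0) \leq C$ with $C$ depending only on the listed quantities. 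Restricting to $B_{r/2}$, where $\rho \geq 3/4$, yields the asserted upper bound on $\kappa_{\max}$, and the full two-sided bound on $|\kappa_i|$ follows from the $(\eta,k)$-convexity of $\Sigma$.
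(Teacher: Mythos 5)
The paper does not actually prove this theorem: it states that the argument ``would be exactly the same as in \cite{Zhou-CVPDE}'' and omits it, so there is no internal proof to compare against. Judged on its own, your sketch has the right skeleton --- a test function combining $\log\kappa_{\max}$, an angle-function term $\phi(v)$ (which in the closed case is $-\log(\tau-a)$ and in the graphical case is typically $-\log(\nu^{n+1}-a)$), and a logarithmic cutoff $q\log\rho$ to force an interior maximum; twice-differentiation of the equation; concavity of the quotient to control $G^{pp,qq}\eta_{pp1}\eta_{qq1}$; and the Chen--Dong--Han lemma to produce the dominant good term. Two points, however, are shaky.

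First, you define $F^{ii}=\partial(\sigma_k/\sigma_l)/\partial\eta_{ii}$. That is the quantity the paper calls $G^{ii}$, the derivative with respect to the $\eta$-eigenvalues. The maximum principle, however, contracts second derivatives of the second fundamental form $h_{ij}$, so the correct linearization is $F^{ii}=\partial\bigl(\sigma_k(\eta)/\sigma_l(\eta)\bigr)/\partial h_{ii}=\sum_j G^{jj}-G^{ii}$. This is not a cosmetic distinction: Lemma~\ref{key lemma} asserts $F^{11}\geq c\sum F^{ii}$ \emph{for this} $F^{ii}$, exploiting the $\eta$-structure, whereas for $G^{ii}$ the analogous inequality fails ($G^{nn}$ is the smallest eigenvalue-derivative and can be negligible compared with the trace). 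Since the entire absorption of the $-C\rho^{-2}\sum F^{ii}$ term hinges on $\sum F^{ii}\kappa_i^2\geq F^{11}\kappa_1^2\geq c\,\kappa_1^2\sum F^{ii}$, using the wrong linearization here collapses the argument.

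Second, you do not address the $\nu$-dependence of $f$, which is where the real work is. Differentiating $f(X,\nu)$ twice in the $e_1$ direction produces $h_{11}\,d_\nu f(e_1)$ from the Weingarten formula $\nu_1=\sum_k h_{1k}e_k$, and then $\sum_k h_{11k}\,(d_\nu f)(e_k)$ together with a $-\kappa_1^2\,(d_\nu f)(\nu)$ term from $\nu_{11}$. The $h_{11k}(d_\nu f)(e_k)$ contribution is not absorbable by brute Cauchy--Schwarz splitting; it is cancelled against the corresponding terms coming from $F^{ii}$ applied to the second derivative of the angle-function factor, via the relation $\nabla_i\nu^{n+1}=h_{ii}\langle e_i,E_{n+1}\rangle$ and the first-order critical equation, precisely as the global proof in Section~\ref{global curvature estimate proof} does with $\nabla_i\tau=\sum_j\Phi_j h_{ij}$. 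Also, while not fatal, your description of the third-order terms as ``absorbed by concavity when $k-l\geq 2$ and by Chen--Dong--Han when $k-l=1$'' misattributes the mechanism: concavity (Lemma~\ref{concavity}) handles the $G^{pp,qq}$ terms uniformly for all $0\leq l<k$, and Chen--Dong--Han enters separately and uniformly in the final absorption step.
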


The proof for this interior curvature estimate would be exactly the same as in \cite{Zhou-CVPDE} and hence should be omitted here. However, just as the global curvature estimate, since there would be an extra positive curvature term from the the interchanging formula e.g. \eqref{commutator formula}, we can obtain the estimate when $k=n, l=n-1$ for a gradient-independent right-hand side $f=f(X)$.

\begin{theorem} \label{interior curvature estimates 2}
Let $1 \leq k \leq n$ (the result particularly holds for $k=n$). Suppose that $\Sigma$ is an $(\eta,k)$-convex graph over $B_r$, satisfying
\[\frac{\sigma_k}{\sigma_{k-1}}(\eta[\Sigma])=f(X), \quad X\in B_r \times \bR.\] Then there exists some $C>0$ depending on $n, k, l, r, \inf f, \norm{\Sigma}_{C^1(B_r)}$ and $\norm{f}_{C^2(B_r)}$ such that
\[\sup_{x \in B_{r/2}} |\kappa_{i}(x)| \leq C.\] 
\end{theorem}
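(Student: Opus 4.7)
The plan is to follow the test-function strategy of \cite{Zhou-CVPDE} (which handles the $1 \le k < n$ cases of such interior curvature estimates by essentially the same argument as the global estimate of Theorem \ref{curvature estimate}), with the additional positive curvature term that becomes available for $k = n$ when $f$ is $\nu$-independent. Write $\Sigma = \graph(u)$ over $B_r$, set $v = \sqrt{1+|Du|^2}$, and consider
\[
W(X) = \log \kappa_1(X) + \phi(v) + \beta \log \rho(x),
\]
where $\kappa_1$ is the largest principal curvature of $\Sigma$, $\rho \ge 0$ is a smooth cutoff supported in $B_r$ with $\rho = 0$ on $\partial B_r$, $\phi$ is a suitable auxiliary function of one variable, and $\beta > 0$ is to be fixed. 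Since $W \to -\infty$ near $\partial B_r$, $W$ attains its maximum at an interior point $X_0$, at which the standard perturbation trick lets us assume $\kappa_1$ is a smooth simple eigenvalue. Choose a local orthonormal frame at $X_0$ diagonalizing the second fundamental form, so $h_{ij} = \kappa_i \delta_{ij}$ with $\kappa_1 \ge \cdots \ge \kappa_n$.

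Let $G(h) = (\sigma_k/\sigma_{k-1})(\eta[h])$, and denote by $G^{ij}$ and $G^{ij,pq}$ its first and second derivatives as a function of $h$. The first-order condition $W_i = 0$ gives $h_{11,i}/\kappa_1 = -\phi'(v) v_i - \beta \rho_i/\rho$. The inequality $G^{ii} W_{ii} \le 0$, combined with the commutator identity for $h_{11,ii} - h_{ii,11}$ (whose Gauss contribution in $\bR^{n+1}$ is $\kappa_1\kappa_i(\kappa_i - \kappa_1)$, in the spirit of \eqref{commutator formula}) and with the twice-differentiated equation
\[
G^{ii} h_{ii,11} = f_{,11} - G^{ij,pq} h_{ij,1} h_{pq,1},
\]
yields the main differential inequality. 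Concavity of $G$ on the $(\eta,k)$-admissible cone makes the second-variation term $-G^{ij,pq}h_{ij,1}h_{pq,1}$ nonnegative. For $1 \le k < n$ the argument now concludes exactly as in \cite{Zhou-CVPDE}, using the Chen--Dong--Han absorption lemma \cite{Chen-Dong-Han, Mei-Zhu} to handle the critical case $k-l=1$, and produces $\kappa_1(X_0) \le C$.

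The novelty is the case $k = n$, $l = n-1$. Under the hypothesis $f = f(X)$, twice-differentiating the equation produces no terms of the form $f_\nu \cdot \nabla\nu \cdot \nabla\nu$, so $f_{,11}$ contributes only $O(\kappa_1)$. Moreover, for $(\sigma_n/\sigma_{n-1})(\eta[\cdot])$ one has the explicit relation $G^{ii} = \sum_j F^{jj} - F^{ii}$ with $F^{jj} = \partial(\sigma_n/\sigma_{n-1})/\partial \tilde\lambda_j$ evaluated at $\tilde\lambda_j = H - \kappa_j > 0$, and a direct computation shows that the commutator contribution $\kappa_1^{-1}\sum_i G^{ii}\kappa_i\kappa_1(\kappa_i - \kappa_1)$ contains a strictly positive piece of order $\kappa_1 \sum_j F^{jj}$. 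This is the \emph{extra positive curvature term} referenced in the remark preceding Theorem \ref{existence 2} and in Lemma \ref{commutator}. Choosing $\beta$ large enough (to control the cutoff terms $\beta G^{ii}[\rho_{ii}/\rho - \rho_i^2/\rho^2]$) and $\phi$ appropriately, this positivity absorbs the gradient-square term $G^{ii} h_{11,i}^2/\kappa_1^2$ after substituting the first-order condition, forcing $\kappa_1(X_0) \le C$ and hence the desired bound on $B_{r/2}$.

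The main obstacle is verifying that, in the $k = n$ case, the extra positive curvature contribution really does yield coercivity in $\kappa_1$ strong enough to dominate both the gradient terms produced by $W_i = 0$ and the cutoff terms. One should carry out the standard case split (indices $i$ with $\kappa_i$ comparable to $\kappa_1$ versus those with $\kappa_i$ small) and exploit the $(\eta, n)$-convexity $H - \kappa_i > 0$ to bound the individual $F^{jj}$ and the full sum $\sum_j F^{jj}$ from below. The same case analysis already underlies the global estimate in Theorem \ref{curvature estimate}; the interior version reduces to tuning $\rho$ and $\phi$ compatibly with the constants, in direct parallel with \cite{Zhou-CVPDE}.
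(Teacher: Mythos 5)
Your proposal is correct and follows essentially the same route as the paper, which simply transplants the Appendix proof of Theorem \ref{interior curvature estimates} (cutoff $\times$ $\log\kappa_1$ $\times$ a gradient/support-function factor, Simons-type commutator, twice-differentiated equation, concavity) to the $\eta$-operator: the decisive point in both is that for $f=f(X)$ the only surviving large term is the positive Gauss contribution, which by degree-one homogeneity equals $\kappa_1\sum_i F^{ii}h_{ii}=\kappa_1F=\kappa_1 f\geq(\inf f)\,\kappa_1$, while $\sum_i F^{ii}$ is bounded above for the quotient, so the argument closes uniformly in $1\leq k\leq n$ and neither the case split on indices nor the Chen--Dong--Han absorption lemma is actually needed. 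Two minor points: the Gauss term in your parenthetical should read $\kappa_1\kappa_i(\kappa_1-\kappa_i)$, as in \eqref{commutator formula}, not $\kappa_1\kappa_i(\kappa_i-\kappa_1)$; and the accompanying negative piece $-\sum_i F^{ii}\kappa_i^2$ is precisely what the support-function factor $-\log(\nu^{n+1}-a)$ (your $\phi(v)$) is there to over-cancel.
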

The proof for this theorem would the same as the proof of theorem \ref{interior curvature estimates} in the Appendix and hence will be omitted here.

\subsection{Some related research}
\begin{remark}
With the above theorems, the $C^2$ regularity problem for this class of Hessian quotient equations \eqref{equation 2} has now been completely settled, except for the $k=n$ case(s). On the contrary, we would like to mention that a full solution to the $C^2$ regularity problem of the usual Hessian (quotient) equations
\[\frac{\sigma_k(D^2u)}{\sigma_l(D^2u)}=f(x,u,Du), \quad 0 \leq l < k \leq n,\] has been a longstanding problem. Even for the global $C^2$ estimate, only some special cases have been resolved; see the representative work of Guan-Ren-Wang \cite{Guan-Ren-Wang}, Guan-Li-Li \cite{Guan-Li-Li}, Lu \cite{Lu-CVPDE}, Ren-Wang \cite{Ren-Wang-1, Ren-Wang-2}, and Yang \cite{Yang}. For the purely interior $C^2$ estimate, the interested reader is referred to the recent substantial progress made by Guan-Qiu \cite{Guan-Qiu}, Lu \cite{Lu-1, Lu-2}, Qiu \cite{Qiu-1, Qiu-2}, Shankar-Yuan \cite{SY-preprint} and the references therein. For the Pogorelov estimate, see the work of Li-Ren-Wang \cite{Li-Ren-Wang}, Tu \cite{Tu} and Zhang \cite{Zhang}.
\end{remark}

Finally, we present one more application of the $C^2$ estimates, which is the solvability of the Dirichlet problem for an extended class of equations on Riemannian manifolds.

\begin{theorem}\label{Dirichlet problem}
Let $0 \leq l<k<n$ and let $\overline{M}:=M \cup \partial M$ be a compact Riemannian manifold of dimension $n \geq 3$, with smooth boundary $\partial M \neq \emptyset$ and $\varphi \in C^2(\partial M)$. Suppose that $\psi$ is a positive $C^\infty$ function with respect to $(x,z,p) \in \overline{M} \times \bR \times T_{x}M$, where $T_{x}M$ denotes the tangent space of $M$ at $x$, and $\psi_z>0$. Assume that the following growth condition holds: there exists a positive constant $K_0$ such that for any $x \in M', z \in \bR, p \in T_xM$ with $|p|\geq K_0$, the function $\psi$ satisfies
\[\frac{p \cdot \nabla_{x}\psi(x,z,p)}{|p|^2}-p\cdot \nabla_{p}\psi(x,z,p) \geq - \overline{\psi}(x,z)|p|^{\gamma}\] for some continuous function $\overline{\psi}>0$ of $(x,z) \in \overline{M}\times \bR$ and some constant $\gamma \in (0,2)$. If there exists some admissible subsolution $\underline{u} \in C^2(\overline{M})$, then the following Dirichlet problem 
\begin{alignat*}{2}
\frac{\sigma_k(\lambda[U])}{\sigma_{l}(\lambda[U])}&=\psi(x,u, Du) &\quad &\text{in $M$,} \\
u&=\varphi &\quad &\text{on $\partial M$.}
\end{alignat*}
admits a smooth unique admissible solution $u \in C^{\infty}(\overline{M})$ with $\lambda[U] \in \Gamma_k$, where
\[U=\theta(\Delta u)I-\mu(\nabla^2u)+\chi, \quad \text{$\chi$ is a $(0,2)$-tensor},\quad \text{and $\theta \geq \mu, \theta>0$}.\]
\end{theorem}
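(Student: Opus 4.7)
The plan is to prove Theorem \ref{Dirichlet problem} via the standard continuity method together with the degree theory of \cite{Li}. To apply this machinery, the main task is to establish a chain of a priori $C^{2,\alpha}$ estimates for admissible solutions, after which higher regularity follows from Schauder theory and a standard bootstrap. Uniqueness is immediate from $\psi_z>0$ and the maximum principle. The operator $U=\theta(\Delta u)I-\mu(\nabla^2 u)+\chi$ with $\theta\geq\mu>0$ is structurally of the same type as $\eta[u]=(\Delta u)I-\nabla^2 u$ (the case $\theta=\mu=1$, $\chi=0$), so the $C^2$ arguments from the earlier theorems of the paper should transfer essentially unchanged; the lower-order estimates, however, require the growth assumption on $\psi$.

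First I would derive the $C^0$ bound. Since $\underline{u}$ is an admissible subsolution and $\psi_z>0$, a standard comparison argument gives $\underline{u}\leq u$ in $\overline{M}$, which yields a lower bound. An upper bound follows from the maximum principle applied at an interior maximum together with the monotonicity $\psi_z>0$ and the concavity of $(\sigma_k/\sigma_l)^{1/(k-l)}$ on $\Gamma_k$. The boundary gradient estimate follows from the barrier $\underline{u}\leq u$ combined with $u=\underline{u}=\varphi$ on $\partial M$. For the interior gradient estimate, I would run a Bernstein-type argument: differentiate the equation once, form the auxiliary quantity $\Phi=\log|\nabla u|^2+\lambda u$ (or $e^{\alpha u}|\nabla u|^2$) with an appropriate choice of parameters, and apply the maximum principle. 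The growth hypothesis
\[\frac{p\cdot\nabla_x\psi}{|p|^2}-p\cdot\nabla_p\psi\geq-\overline{\psi}(x,z)|p|^{\gamma}, \qquad \gamma\in(0,2),\]
is precisely what is needed to absorb the "bad" terms arising from the dependence of $\psi$ on $Du$, since the positive contribution $-F^{ij}u_{ij}$-type term (from the concavity structure and from $\theta\geq\mu$) produces a coefficient scaling like $|\nabla u|^2$, while the right-hand side produces at worst $|\nabla u|^{\gamma}$.

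Next I would establish the $C^2$ estimates. For the boundary second derivative bound I would use the subsolution $\underline{u}$ to construct barriers in the normal-tangential and pure-tangential directions, with the pure-normal direction handled by the equation itself once the other entries are controlled; this is modeled on the Caffarelli--Nirenberg--Spruck technique, adapted to the operator $U$ (the condition $\theta\geq\mu$ guarantees that the linearized operator is sufficiently elliptic in tangential directions). For the global interior $C^2$ bound, I would essentially repeat the argument underlying the global $C^2$ estimate stated earlier in the paper: apply the maximum principle to the largest eigenvalue of $U$ (or an exponential auxiliary such as $\log\kappa_{\max}+\varphi(|\nabla u|^2)+Au$), invoke the concavity of $(\sigma_k/\sigma_l)^{1/(k-l)}$, and exploit the algebraic lemma of Chen--Dong--Han to handle the case $k-l=1$. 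The more general shape of $U$ only modifies coefficients by bounded multiples, so the argument goes through mutatis mutandis.

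The main obstacle, as usual for this class of problems, will be the boundary $C^2$ estimate, specifically the double-normal derivative bound: one must show that $U$ remains in a compact subset of $\Gamma_k$ up to the boundary, which requires a careful choice of barrier reflecting the $\theta$--$\mu$--$\chi$ structure of $U$, and a delicate use of both $\psi_z>0$ and the admissibility of $\underline{u}$. Once all these estimates are in hand, the Evans--Krylov--Trudinger theorem yields interior $C^{2,\alpha}$ bounds, and together with the boundary regularity theory for concave fully nonlinear equations this upgrades to $C^{2,\alpha}(\overline{M})$. Openness and closedness of the set of $t\in[0,1]$ for which the deformed problem (say interpolating $\psi$ with a datum associated to $\underline{u}$) is solvable then follow from the implicit function theorem and the a priori estimates respectively, and the degree-theoretic framework of \cite{Li} together with $\psi_z>0$ produces a unique admissible solution, which is then smooth by bootstrap.
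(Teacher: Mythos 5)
Your outline is a correct, standard chain of a priori estimates ($C^0$ via subsolution comparison and $\psi_z>0$, boundary gradient via barriers, interior gradient via Bernstein using the growth condition, boundary $C^2$ via Caffarelli--Nirenberg--Spruck barriers, global $C^2$ via the maximum principle plus concavity plus the Chen--Dong--Han lemma, then Evans--Krylov and continuity/degree theory), and it would work. But it is a genuinely different route from what the paper does: the paper explicitly \emph{omits} the proof and instead observes that all of these a priori estimates are already established in Liang--Zhu for the $\theta=\mu=1$, $\chi\neq0$ case, and that their arguments hinge on the Chen--Dong--Han inequality $F^{11}\geq c\sum F^{ii}$. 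The single new ingredient supplied by the paper is therefore Lemma~\ref{key lemma} in Section~\ref{the key lemma}, where the key lemma is verified for the extended class $U=\theta(\Delta u)I-\mu(\nabla^2u)+\chi$ with $\theta\geq\mu$, $\theta>0$; once that is in hand, the rest of the proof is an unchanged citation. Your version essentially re-derives the entire estimate chain rather than isolating the one place where the extension actually matters. What your approach buys is self-containment (no reliance on the reader having Liang--Zhu at hand), but it obscures the paper's actual novelty; what the paper's approach buys is brevity and a sharp identification of where $\theta$, $\mu$, $\chi$ enter. One small correction: you write $\theta\geq\mu>0$, but the hypothesis in the theorem is only $\theta\geq\mu$ and $\theta>0$; the parameter $\mu$ is allowed to be zero or negative (the sign of $\theta-\mu$ is what is used in the proof of the key lemma, together with the positivity of $\theta$, not any positivity of $\mu$ itself).
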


\begin{remark}
This theorem has been proved by Chen-Tu-Xiang \cite{Chen-Tu-Xiang-IMRN-2023} when $k-l \geq 2$, $\chi=0$ and $\mu=1$; when $\chi \neq 0$ but $\theta=\mu=1$, it has been proved by Jiao-Liu \cite{Jiao-Liu} and Liang-Zhu \cite{Liang-Zhu}. Here, our theorem \ref{Dirichlet problem} extends all their results in the sense that it holds for the wholly extended class $U=\theta(\Delta u)I-\mu(\nabla^2u)+\chi$ and all $0 \leq l \leq k-1$.

The proof will be omitted, as all a priori estimates have been well-established in \cite{Liang-Zhu} and there is no need to repeat the arguments. Instead, since the arguments build on the validity of a key lemma due to Chen-Dong-Han \cite{Chen-Dong-Han}, it suffices for us to only show the Chen-Dong-Han lemma holds for this extended class and the theorem follows as in \cite{Liang-Zhu}. The motivation for considering this generalized class is due to its complex analogue studied in \cite{Guan-Qiu-Yuan}.
\end{remark}

\begin{remark}

As pointed out by Chen-Tu-Xiang \cite{Chen-Tu-Xiang-IMRN-2023}, the growth condition can be removed when $\psi=\psi(x,u)$ is independent of $Du$, which was only used in deriving the global gradient estimate. The reader is also referred to \cite{Guan-Duke-2014, Guan-Adv-2023, Guan-CPDE-1994, Guan-CVPDE-1999, Guan-Jiao-CVPDE-2015, Guan-Jiao-DCDS-2016} for more studies of Dirichlet problems for Hessian type equations on Riemannian manifolds.
\end{remark}

We shall end the Introduction with the following brief discussions on some open problems for this class of equations.

\subsection{Open problems}
\indent

Note that the Chen-Dong-Han lemma holds for all $0 \leq l<k < n$, so our proofs here can be seen as a simple alternative of all previous ones derived for the $k-l \geq 2$ case. On the other hand, we would like to mention that the key lemma does not hold for $k=n$; see remark \ref{reason for k<n} below. Hence, all these second order estimates remain unknown for the $(n,n-1)$ quotient; this is the critical case when both the arguments of Chu-Jiao \cite{Chu-Jiao} and Chen-Tu-Xiang \cite{Chen-Tu-Xiang-IMRN-2023, Chen-Tu-Xiang-JDE-2020, Chen-Tu-Xiang-JDE-2021, Chen-Tu-Xiang-preprint}, and the key lemma of Chen-Dong-Han \cite{Chen-Dong-Han} are not applicable, thus new techniques must be employed. For the purely interior $C^2$ estimates, the $(n,n-2)$ quotient is also a critical case in this sense, because the bad term is 
\[-C\frac{\sum F^{ii}}{\rho^2}.\] Therefore, obtaining second order estimates in the critical case(s) may be an appealing problem for further study.

Another possibly appealing problem might be studying the rigidity properties of entire admissible solutions for this class of equations. Previously, Chen-Dong-Han \cite[Theorem 1.3]{Chen-Dong-Han} has done this under a two-sided growth condition. It would be desirable if one could establish the result under the preferred lower quadratic growth condition.

\subsection{Organization of this note}
\indent

The rest of this note is organized as follows. In section \ref{Preliminaries}, we introduce basic notations and standard properties of our equation operator. In section \ref{the key lemma}, we prove a generalized lemma of Chen-Dong-Han \cite{Chen-Dong-Han} for the extended class
\[\theta(\Delta u)I-\mu(D^2u),\] so that we can use it for the Dirichlet problem in theorem \ref{Dirichlet problem}. Also, all the second order estimates derived here remain valid for this extended class. This lemma is so handy that we believe many other problems which also concern this class of Hessian type equations, can be solved in similar ways by applying this lemma. In section \ref{global curvature estimate proof}, we prove the global curvature estimate; in section \ref{Pogorelov proof}, we derive the optimal Pogorelov estimates; and in section \ref{interior proof}, we obtain the purely interior $C^2$ estimate. The appendix contains a proof of interior curvature estimates for admissible graphs satisfying \eqref{curvature quotient}, from which theorem \ref{interior curvature estimates 2} follows as well.

\subsection*{Acknowledgement}

We would like to thank Professor Man-Chun Lee for bringing up the paper \cite{Chu-Jiao} of Chu-Jiao to our attention, from which we learned of the problem(s) being concerned here. We would also like to thank Professor Qiang Tu for pointing out a few mistakes in our initial draft and some communications.

\section{Preliminaries} \label{Preliminaries}
The $k$-th symmetric polynomial $\sigma_{k}(x): \bR^n \to \bR$ is a smooth symmetric function of $n$ variables, defined by
\[\sigma_k(x_1,\ldots,x_n)=\sum_{1 \leq i_1<i_2<\cdots<i_k\leq n} x_{i_1}\cdots x_{i_k}.\] The $k$-th \Garding\ cone is defined by
\[\Gamma_k=\{x \in \bR^n: \sigma_j(x)>0 \quad \forall\ 1\leq j \leq k\}.\]

\begin{notation}
For a symmetric matrix $A=(a_{ij})$, the notation $a_{ij} \in \Gamma_k$ means that its eigenvalues $\lambda(A)=(\lambda_1,\ldots,\lambda_n)$ belong to $\Gamma_k$.
\end{notation}
\begin{definition} \label{admissible solutions}
We say $\Sigma$ is an $(\eta,k)$-convex solution of \eqref{equation 1} if $\eta[\Sigma] \in \Gamma_k$. Similarly, we say $u$ is an $(\eta,k)$-convex solution of \eqref{equation 2} if $\eta[u] \in \Gamma_k$.
\end{definition}

\begin{notation}
Observe that 
\[\frac{\partial}{\partial x_i} \sigma_k(x)=\sigma_{k-1}(x)\bigg|_{x_i=0}=\sigma_{k-1}(x_1,\ldots,x_{i-1},0,x_{i+1},\ldots,x_n).\] We will denote the partial derivatives by $\sigma_{k-1}(x|i)$ or $\sigma_{k}^{ii}(x)$ interchangeably.
\end{notation}

The following three lemmas contain well-known properties of the elementary symmetric polynomials.

\begin{lemma} \label{properties of sigma-k}
For $\lambda \in \bR^n$ and $1 \leq k \leq n$, we have
\begin{align*}
\sigma_k(\lambda)&=\lambda_i \sigma_{k-1}(\lambda|i)+\sigma_k(\lambda|i), \\
\sum_{i=1}^{n} \lambda_i \sigma_{k-1}(\lambda|i)&=k\sigma_k(\lambda),\\
\sum_{i=1}^{n} \sigma_{k-1}(\lambda|i)&=(n-k+1)\sigma_{k-1}(\lambda),\\
\end{align*}
\end{lemma}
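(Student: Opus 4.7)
The plan is to verify each of the three identities by direct combinatorial counting on the defining sum $\sigma_k(\lambda) = \sum_{1 \le i_1 < \cdots < i_k \le n} \lambda_{i_1}\cdots\lambda_{i_k}$; no analytic machinery is needed.

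For the first identity, I would partition the index tuples $(i_1 < \cdots < i_k)$ contributing to $\sigma_k(\lambda)$ according to whether the fixed index $i$ appears among them. The tuples containing $i$ are in bijection with the $(k-1)$-subsets of $\{1,\ldots,n\}\setminus\{i\}$ via removing $i$, and their contribution factors out $\lambda_i$, yielding $\lambda_i \sigma_{k-1}(\lambda|i)$. The remaining tuples are precisely those counted by $\sigma_k(\lambda|i)$, producing the decomposition.

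For the second identity, I would swap the order of summation in $\sum_{i=1}^n \lambda_i \sigma_{k-1}(\lambda|i)$. Expanding out, each monomial $\lambda_{j_1}\cdots\lambda_{j_k}$ in $\sigma_k(\lambda)$ is produced once for every choice of $i \in \{j_1,\ldots,j_k\}$, so it is counted exactly $k$ times; hence the total is $k\sigma_k(\lambda)$. Alternatively, since $\sigma_k$ is a homogeneous polynomial of degree $k$ in $\lambda$, this is just Euler's identity $\sum_i \lambda_i \partial_{\lambda_i}\sigma_k = k\sigma_k$.

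For the third identity, I would again interchange summation in $\sum_{i=1}^n \sigma_{k-1}(\lambda|i)$: a generic monomial $\lambda_{j_1}\cdots\lambda_{j_{k-1}}$ appears once for every index $i \notin \{j_1,\ldots,j_{k-1}\}$, of which there are exactly $n-(k-1)=n-k+1$. Summing over all $(k-1)$-subsets gives $(n-k+1)\sigma_{k-1}(\lambda)$. Since each identity reduces to straightforward bookkeeping on which indices are included or excluded, there is no substantive obstacle; the only point requiring slight care is the complementary count $n-k+1$ in the third identity, which one must not confuse with $n-k$.
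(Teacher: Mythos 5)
Your combinatorial bookkeeping is correct and is the standard argument for all three identities. The paper itself states this lemma without proof, labeling it as a collection of well-known properties of the elementary symmetric polynomials, so there is no paper proof to compare against; your argument — partitioning monomials by whether the fixed index $i$ is included, double-counting for the second identity (or equivalently Euler's identity for homogeneous polynomials), and the complementary count $n-(k-1)=n-k+1$ for the third — is exactly what one would write to fill in the omitted details.
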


\begin{lemma} \label{Newton-Maclaurin}
Let $\lambda \in \Gamma_k$. For $n \geq k > l \geq 0$, $n \geq r>s \geq 0$ with $k \geq r$ and $l \geq s$, we have that 
\[\left[\frac{\binom{n}{k}^{-1}\sigma_k(\lambda)}{\binom{n}{l}^{-1}\sigma_l(\lambda)}\right]^{\frac{1}{k-l}} \leq \left[\frac{\binom{n}{r}^{-1}\sigma_r(\lambda)}{\binom{n}{s}^{-1}\sigma_s(\lambda)}\right]^{\frac{1}{r-s}}.\]

In particular, we have that
\[k(n-l+1)\sigma_k(\lambda)\sigma_{l-1}(\lambda) \leq l(n-k+1)\sigma_{k-1}(\lambda)\sigma_l(\lambda).\]
\end{lemma}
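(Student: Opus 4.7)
The plan is to deduce both inequalities from the classical Newton inequality
$$c_j^2 \geq c_{j-1}\,c_{j+1}, \quad 1 \leq j \leq n-1,$$
for the normalized sums $c_j := \binom{n}{j}^{-1}\sigma_j(\lambda)$. I would either cite this from Hardy--Littlewood--P\'{o}lya or prove it by induction on $n$ via Rolle's theorem: the polynomial $P(t) = \prod_{i=1}^n(t - \lambda_i)$ has $n$ real roots, so $P'$ has $n-1$ real roots whose elementary symmetric polynomials agree, after an explicit rescaling, with $\sigma_0(\lambda),\ldots,\sigma_{n-1}(\lambda)$, which permits the induction to proceed. The cone condition $\lambda \in \Gamma_k$ makes $c_0,c_1,\ldots,c_k$ strictly positive, so Newton's inequality rewrites as the consecutive-ratio monotonicity $c_j/c_{j-1} \geq c_{j+1}/c_j$ for $1 \leq j \leq k-1$, i.e.\ the sequence $j \mapsto \log c_j$ is concave on $\{0,1,\ldots,k\}$.

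With concavity in hand, the general Newton--Maclaurin inequality is the standard fact that chord slopes of a concave function decrease under componentwise rightward shifts of the interval. Writing the chord slope $(\log c_k - \log c_l)/(k-l)$ as the average of consecutive differences $\Delta_j := \log c_j - \log c_{j-1}$ over $j = l+1,\ldots,k$, the $\Delta_j$ are non-increasing by the previous paragraph, and the average of a non-increasing sequence over $[l+1,k]$ is no larger than its average over $[s+1,r]$ whenever $l \geq s$ and $k \geq r$ -- a routine verification that moves the two endpoints one at a time. Exponentiating this chord comparison yields exactly
$$\left[\frac{c_k}{c_l}\right]^{\frac{1}{k-l}} \leq \left[\frac{c_r}{c_s}\right]^{\frac{1}{r-s}}.$$

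For the ``in particular'' assertion (which tacitly assumes $l \geq 1$), I would skip the general form and apply concavity of $\log c_j$ directly at consecutive indices to get $\Delta_k \leq \Delta_l$, i.e.\ $c_k/c_{k-1} \leq c_l/c_{l-1}$. Unraveling $c_j = \binom{n}{j}^{-1}\sigma_j$ via the identity $\binom{n}{j-1}/\binom{n}{j} = j/(n-j+1)$ converts this into $k(n-l+1)\sigma_k\sigma_{l-1} \leq l(n-k+1)\sigma_{k-1}\sigma_l$, which is what is claimed.

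The only non-routine ingredient is Newton's inequality itself; since it is entirely classical, a citation would suffice, and the main ``obstacle'' is really just bookkeeping -- ensuring that the hypothesis $\lambda \in \Gamma_k$ supplies positivity of $c_0,\ldots,c_k$ throughout the range on which concavity of $\log c_j$ is invoked, so that one is justified both in taking logarithms and in dividing by the $c_j$'s when rephrasing Newton's inequality as a monotone-ratio statement.
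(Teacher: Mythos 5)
The paper gives no proof of this lemma---it is listed among the ``well-known properties of the elementary symmetric polynomials''---so there is no in-text argument to compare against. Your derivation from Newton's inequality $c_j^2 \geq c_{j-1}c_{j+1}$ is correct and is the standard route: the hypothesis $\lambda \in \Gamma_k$ supplies the strict positivity of $c_0,\ldots,c_k$ needed to turn Newton's inequality into log-concavity of $j \mapsto \log c_j$ on $\{0,\ldots,k\}$, the chord-slope comparison (shifting the two endpoints of the averaging window one at a time, each move decreasing the average of a non-increasing sequence) yields the first display, and unraveling $\binom{n}{j-1}/\binom{n}{j}=j/(n-j+1)$ from $c_k c_{l-1} \le c_{k-1} c_l$ reproduces exactly the coefficients $k(n-l+1)$ and $l(n-k+1)$ in the second. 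Your remark that the ``in particular'' inequality tacitly requires $l\ge 1$ is also accurate (with the convention $\sigma_{-1}=0$ it degenerates to $0\le 0$ when $l=0$).
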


\begin{lemma}\label{reverse}
Suppose $\lambda=(\lambda_1,\ldots,\lambda_n) \in \Gamma_k$ is ordered as $\lambda_1 \leq \lambda_2 \leq \cdots \leq \lambda_n$. Then $\lambda_{n-k+1}>0$ and 
\[\sigma_{k-1}(\lambda|n-k+1) \geq c(n,k)\sigma_{k-1}(\lambda).\]
\end{lemma}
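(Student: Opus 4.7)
The plan is to handle the two assertions in turn.

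For the first assertion $\lambda_{n-k+1}>0$, I would encode $\lambda$ as the negatives of the roots of the polynomial $P(t)=\prod_{i=1}^{n}(t+\lambda_i)=\sum_{j=0}^{n}\sigma_j(\lambda)\,t^{n-j}$ and exploit the positivity of $\sigma_0,\sigma_1,\ldots,\sigma_k$ forced by $\lambda\in\Gamma_k$. Differentiating $n-k$ times produces the degree-$k$ polynomial
$$P^{(n-k)}(t)=\sum_{j=0}^{k}\sigma_j(\lambda)\,\frac{(n-j)!}{(k-j)!}\,t^{k-j}$$
with every coefficient strictly positive, so $P^{(n-k)}(t)>0$ for all $t\geq 0$, and hence every real root of $P^{(n-k)}$ is strictly negative. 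Iterating Rolle's theorem on the real roots $-\lambda_n\leq\cdots\leq -\lambda_1$ of $P$ places the $k$ real roots $t_1\leq\cdots\leq t_k$ of $P^{(n-k)}$ in the interlacing intervals $t_i\in[-\lambda_{n-i+1},\,-\lambda_{k-i+1}]$. Taking $i=k$ yields $-\lambda_{n-k+1}\leq t_k<0$, which gives $\lambda_{n-k+1}>0$.

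For the second assertion, my first step is to establish the monotonicity $\sigma_{k-1}(\lambda|1)\geq\sigma_{k-1}(\lambda|2)\geq\cdots\geq\sigma_{k-1}(\lambda|n)>0$ via the elementary identity
$$\sigma_{k-1}(\lambda|i)-\sigma_{k-1}(\lambda|j)=(\lambda_j-\lambda_i)\,\sigma_{k-2}(\lambda|i,j)$$
combined with the standard nonnegativity $\sigma_{k-2}(\lambda|i,j)\geq 0$ on $\Gamma_k$, which follows because truncating a $\Gamma_k$-vector in two coordinates lands in $\overline{\Gamma_{k-2}}$. My second step is to exploit the degree-zero homogeneity of $\lambda\mapsto\sigma_{k-1}(\lambda|n-k+1)/\sigma_{k-1}(\lambda)$ in order to reduce the inequality to the level set
$$S:=\{\lambda\in\overline{\Gamma_k}:\sigma_{k-1}(\lambda)=1,\ \lambda_1\leq\cdots\leq\lambda_n\}.$$
Once $S$ is shown to be compact, the continuous function $\lambda\mapsto\sigma_{k-1}(\lambda|n-k+1)$ attains its minimum on $S$, and this minimum is strictly positive: by the monotonicity step one has $\sigma_{k-1}(\lambda|n-k+1)\geq\sigma_{k-1}(\lambda|n)>0$ on $\Gamma_k$, and a direct check using the constraints $\sigma_1,\ldots,\sigma_k\geq 0$ rules out vanishing on the boundary.

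The hard part will be verifying the compactness of $S$. A Newton--Maclaurin bound (Lemma \ref{Newton-Maclaurin}) applied to the normalization $\sigma_{k-1}(\lambda)=1$ yields an upper bound on $\lambda_n$ depending only on $n$ and $k$, but trapping $\lambda_1$ from below requires a more delicate use of the full system of admissibility conditions $\sigma_1,\ldots,\sigma_k\geq 0$: once one component drifts to $-\infty$, the alternating-sign contributions force some $\sigma_j$ below zero and violate admissibility. I expect this compactness argument to constitute the bulk of the proof of Part 2; once it is in place, the remainder is a straightforward continuity conclusion.
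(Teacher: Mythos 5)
The paper states this lemma without proof, as one of several ``well-known properties,'' so there is no internal argument to compare against; I will therefore assess your proposal on its own terms. Your proof of the first assertion is correct and is the classical one: on $\Gamma_k$ all coefficients of $P^{(n-k)}$ are strictly positive, so its roots are strictly negative, and iterated Rolle interlacing ($r_i\leq t_i\leq r_{i+n-k}$ for the roots of the $(n-k)$-th derivative) gives $-\lambda_{n-k+1}\leq t_k<0$. The monotonicity step in Part 2, via $\sigma_{k-1}(\lambda|i)-\sigma_{k-1}(\lambda|j)=(\lambda_j-\lambda_i)\sigma_{k-2}(\lambda|i,j)$ and positivity of the truncated polynomials on $\Gamma_k$, is also correct.

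The compactness step, which you yourself identify as carrying the whole weight of Part 2, fails: the set $S=\{\lambda\in\overline{\Gamma_k}:\sigma_{k-1}(\lambda)=1,\ \lambda_1\leq\cdots\leq\lambda_n\}$ is \emph{not} compact once $k\geq 3$. Take $n=4$, $k=3$ and $\lambda=(0,\epsilon,\epsilon,t)$: then $\sigma_1=2\epsilon+t>0$, $\sigma_2=\epsilon^2+2\epsilon t>0$, $\sigma_3=\epsilon^2t>0$, so $\lambda\in\Gamma_3$, and the normalization $\sigma_2(\lambda)=1$ forces $t=(1-\epsilon^2)/(2\epsilon)\to\infty$ as $\epsilon\to0^{+}$; analogous configurations (many small positive entries, one huge entry) work for every $3\leq k\leq n$. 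The Newton--Maclaurin inequality applied to $\sigma_{k-1}(\lambda)=1$ yields a \emph{lower} bound on $\sigma_1(\lambda)$, not an upper bound on $\lambda_n$, so the bound you invoke does not exist; and your sketch for the delicate part only guards against $\lambda_1\to-\infty$, which is not where compactness breaks. Without compactness, continuity plus the pointwise positivity $\sigma_{k-1}(\lambda|n-k+1)\geq\sigma_{k-1}(\lambda|n)>0$ gives no uniform constant $c(n,k)$. (In the example above one computes $\sigma_2(\lambda|2)=\epsilon t\to\tfrac12$, so the lemma itself survives, but only through a cancellation your argument never sees.) To repair the proof you must replace the soft compactness argument by a quantitative algebraic one, e.g.\ the classical Lin--Trudinger-type argument that combines $\sum_i\sigma_{k-1}(\lambda|i)=(n-k+1)\sigma_{k-1}(\lambda)$ with the expansion $\sigma_{k-1}(\lambda|j)=\sigma_{k-1}(\lambda|i,j)+\lambda_i\sigma_{k-2}(\lambda|i,j)$ and the positivity of $\lambda_{n-k+1},\ldots,\lambda_n$ established in Part 1 to compare $\sigma_{k-1}(\lambda|n-k+1)$ directly with $\sigma_{k-1}(\lambda)$.
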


Let $W=(w_{ij})$ be a Codazzi tensor \footnote{We say a symmetric $2$-tensor $W$ on a Riemannian manifold $(M,g)$ is Codazzi if $W$ is closed when viewed as a $TM$-valued $1$-form i.e. $\nabla_{X} W(Y,Z) = \nabla_{Y}W(X,Z)$ for all tangent vectors $X,Y,Z$ where $\nabla$ is the Levi-Civita connection. Typical examples include the Hessian matrix $D^2u$ of a function $u \in C^2$ and the second fundamental form $(h_{ij})$ of a hypersurface.} whose eigenvalues are denoted by $\kappa=(\kappa_1,\ldots,\kappa_n)$ and let $\lambda=(\lambda_1,\ldots, \lambda_n)$ be the eigenvalues of the matrix
\[\eta=\theta \sigma_1(W)I-\mu W, \quad \text{where $\theta \geq \mu$ and $\theta>0$}.\] For the Hessian quotient in concern i.e.
\[\frac{\sigma_k(\eta[W])}{\sigma_l(\eta[W])},\] when we view it as a function of $\lambda=(\lambda_1,\ldots,\lambda_n)$, we denote it by $G(\eta)$ or $G(\lambda)$ and 
\[G^{ij}=\frac{\partial}{\partial \eta_{ij}} \left[\frac{\sigma_k(\lambda)}{\sigma_l(\lambda)}\right], \quad G^{ij,kl}=\frac{\partial^2}{\partial \eta_{ij}\partial \eta_{kl}} \left[\frac{\sigma_k(\lambda)}{\sigma_l(\lambda)}\right].\]

Similarly, when we view the quotient as a function of $W=(w_{ij})$, we denote it by $F(W)$ or $F(\kappa)$ and 
\[F^{ij}=\frac{\partial}{\partial w_{ij}} \left[\frac{\sigma_k(\eta)}{\sigma_l(\eta)}\right], \quad F^{ij,kl}=\frac{\partial^2}{\partial w_{ij}\partial w_{kl}} \left[\frac{\sigma_k(\eta)}{\sigma_l(\eta)}\right].\]

Throughout, we assume $W=(w_{ij})$ is diagonal and 
\[\kappa_1 \geq \kappa_2 \geq \cdots \geq \kappa_n, \quad F^{11} \leq F^{22} \leq \cdots \leq F^{nn}. \] Thus, we have $\lambda_1 \leq \lambda_2 \leq \cdots \leq \lambda_n$ and
\[G^{11} \geq G^{22} \geq \cdots \geq G^{nn}.\]
The eigenvalues and the derivatives are related by
\[\lambda_i=\theta \sum_{j = 1}^{n} \kappa_j-\mu\kappa_i, \quad F^{ii}=\theta \sum_{j = 1}^{n} G^{jj}- \mu G^{ii}.\] Moreover, it can be readily verified that
\begin{align*}
\sum F^{ii}w_{ii}&=\sum G^{ii} \eta_{ii}, \\
\sum F^{ii}w_{iih}&=\sum G^{ii} \eta_{iih}, \\
\sum F^{ii}w_{iihh}&=\sum G^{ii} \eta_{iihh},
\end{align*} see e.g. \cite[(2.7)-(2.8)]{Mei}.

Since we will be working with $(\eta,k)$-convex solutions i.e. $\lambda \in \Gamma_k$, the pair $(G(\lambda),\Gamma_k)$ is the usual Hessian quotient operator with eigenvalues ordered reversely. Hence it satisfies the usual well-known properties and in particular the following:
\begin{lemma}[Concavity] \label{concavity}
If $\lambda \in \Gamma_k$ and $0 \leq l<k \leq n$, then
\[\sum_{i,j}\left[\frac{\partial^2}{\partial \lambda_i\partial \lambda_j}\frac{\sigma_k(\lambda)}{\sigma_l(\lambda)}\right]\xi_i\xi_j \leq \left(1-\frac{1}{k-l}\right)\frac{\left(\sum_{i}\left[\frac{\partial}{\partial \lambda_i}\frac{\sigma_k(\lambda)}{\sigma_l(\lambda)}\right]\xi_i\right)^2}{\frac{\sigma_k(\lambda)}{\sigma_l(\lambda)}}\] for any $\xi=(\xi_1,\ldots,\xi_n) \in \bR^n$.
\end{lemma}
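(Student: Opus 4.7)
The plan is to reduce this to the classical fact that $Q(\lambda) := \bigl(\sigma_k(\lambda)/\sigma_l(\lambda)\bigr)^{1/(k-l)}$ is concave on the \Garding\ cone $\Gamma_k$ whenever $0 \leq l < k \leq n$. This concavity is well-known (a consequence of the Newton--Maclaurin inequalities, cf.\ Caffarelli--Nirenberg--Spruck, and also stated in lemma \ref{Newton-Maclaurin} above in a related form); I would simply cite it rather than reprove it.

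Granted this, I would set $f(\lambda) := \sigma_k(\lambda)/\sigma_l(\lambda) > 0$ on $\Gamma_k$ and $p := 1/(k-l) \in (0,1]$, so $Q = f^{p}$. A direct chain-rule computation gives
\[
\partial_i Q = p\,f^{p-1}\partial_i f,\qquad
\partial_i\partial_j Q = p\,f^{p-1}\partial_i\partial_j f + p(p-1)\,f^{p-2}\,\partial_i f\,\partial_j f.
\]
Contracting with $\xi\otimes\xi$ and using concavity of $Q$, i.e.\ $\sum_{i,j}\partial_i\partial_j Q\,\xi_i\xi_j \leq 0$, I get
\[
p\,f^{p-1}\sum_{i,j}\partial_i\partial_j f\,\xi_i\xi_j + p(p-1)\,f^{p-2}\Bigl(\sum_i \partial_i f\,\xi_i\Bigr)^{\!2} \leq 0.
\]
Dividing by $p\,f^{p-1} > 0$ and rearranging yields exactly
\[
\sum_{i,j}\partial_i\partial_j f\,\xi_i\xi_j \leq (1-p)\,\frac{\bigl(\sum_i \partial_i f\,\xi_i\bigr)^{2}}{f}
= \Bigl(1-\tfrac{1}{k-l}\Bigr)\frac{\bigl(\sum_i \partial_i f\,\xi_i\bigr)^{2}}{f},
\]
which is the claimed inequality.

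There is no real obstacle here beyond invoking the classical concavity of $(\sigma_k/\sigma_l)^{1/(k-l)}$; the rest is a one-line algebraic identity. If one wished to avoid citing the concavity result, the main work would shift to proving it directly, which can be done via Maclaurin's inequality applied to the restrictions of $\sigma_k,\sigma_l$ along a line in $\Gamma_k$, but this seems unnecessary in the present expository context.
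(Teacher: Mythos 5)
Your proposal is correct. The chain-rule computation is right: with $f=\sigma_k/\sigma_l$ and $p=1/(k-l)$, concavity of $f^p$ on $\Gamma_k$ gives exactly
\[
\sum_{i,j}\partial_i\partial_j f\,\xi_i\xi_j \leq (1-p)\,\frac{\bigl(\sum_i\partial_i f\,\xi_i\bigr)^2}{f},
\]
after dividing by $pf^{p-1}>0$, and in the borderline case $k-l=1$ the statement degenerates to plain concavity of $\sigma_k/\sigma_{k-1}$, which is still covered by the same classical fact. For comparison: the paper offers no argument at all here, simply citing Proposition 2.6 of Chen--Dong--Han, which states this inequality directly. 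Your route instead leans on the older and more widely quoted fact that $(\sigma_k/\sigma_l)^{1/(k-l)}$ is concave on $\Gamma_k$ (Caffarelli--Nirenberg--Spruck) and recovers the quantitative form by a one-line computation; this is a standard reduction and arguably more self-contained for a reader who knows the classical literature but not the Chen--Dong--Han paper. Either way the substance lives in a cited concavity result, so there is no gap; just make sure the citation you invoke really covers all $0\leq l<k\leq n$ on $\Gamma_k$ (it does in the standard references).
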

\begin{proof}
See proposition 2.6 in \cite{Chen-Dong-Han}.
\end{proof}
\section{The Key Lemma} \label{the key lemma}
In this section, we provide a modest extension of the key lemma \cite[Lemma 2.7(5)]{Chen-Dong-Han} due to Chen-Dong-Han.

\begin{lemma} \label{key lemma}
Let $0 \leq l < k < n$ and $\theta \geq \mu$ with $\theta>0$. We have that
\[F^{11} \geq c(n,k,l,\theta,\mu) \sum_{i=1}^{n} F^{ii} \geq c(n,k,l,\theta,\mu, \inf F).\]
\end{lemma}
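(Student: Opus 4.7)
The plan is to prove the chain as two separate assertions: $F^{11}\geq c_1(n,k,l,\theta,\mu)\sum_i F^{ii}$, and $\sum_i F^{ii}\geq c_2(n,k,l,\theta,\mu,\inf F)$. For the ratio bound, I would start with a reduction to $G$. Since the map $W\mapsto\eta(W)=\theta\sigma_1(W)I-\mu W$ is linear and $W$ is diagonal, the chain rule yields
\[F^{ii}=\theta T-\mu G^{ii},\qquad \sum_{i=1}^n F^{ii}=(n\theta-\mu)T,\qquad T:=\sum_{j=1}^n G^{jj};\]
concavity and symmetry of $G$ together with the ordering $\lambda_1\leq\cdots\leq\lambda_n$ force $G^{11}\geq\cdots\geq G^{nn}$, so $F^{11}$ is indeed the smallest $F^{ii}$. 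It therefore suffices to establish the single inequality $G^{11}\leq(1-\epsilon)T$ for some $\epsilon=\epsilon(n,k,l)>0$, because then
\[F^{11}=\theta T-\mu G^{11}\geq\bigl[(\theta-\mu)+\mu\epsilon\bigr]T\geq\mu\epsilon\,T=\frac{\mu\epsilon}{n\theta-\mu}\sum_i F^{ii}.\]

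The heart of the proof is the bound $G^{11}\leq(1-\epsilon)T$. I would reduce it to producing a single index $j\in\{2,\ldots,n\}$ (which exists because $k<n$) with $G^{jj}/G^{11}\geq c(n,k,l)>0$, for then $T\geq G^{11}+G^{jj}\geq(1+c)G^{11}$. The natural candidate is $j_0:=n-k+1$. Lemma \ref{reverse} supplies both $\lambda_{j_0}>0$ and $\sigma_{k-1}(\lambda|j_0)\geq c(n,k)\sigma_{k-1}$. The positivity of $\lambda_{j_0}$ combined with the non-negativity of $\sigma_{l-2}(\lambda|j_0)$ on $\Gamma_k$ gives $\sigma_{l-1}(\lambda|j_0)\leq\sigma_{l-1}$; together with the Newton--Maclaurin inequality $\sigma_k\sigma_{l-1}\leq\tfrac{l(n-k+1)}{k(n-l+1)}\sigma_{k-1}\sigma_l$ from Lemma \ref{Newton-Maclaurin}, we get
\[\sigma_l\,G^{j_0 j_0}=\sigma_{k-1}(\lambda|j_0)-G\,\sigma_{l-1}(\lambda|j_0)\geq\Bigl[c(n,k)-\tfrac{l(n-k+1)}{k(n-l+1)}\Bigr]\sigma_{k-1}.\]
On the other hand, dropping the non-negative term $G\,\sigma_{l-1}(\lambda|1)$ and using $\sum_i\sigma_{k-1}(\lambda|i)=(n-k+1)\sigma_{k-1}$, one obtains the matching upper bound $\sigma_l\,G^{11}\leq\sigma_{k-1}(\lambda|1)\leq(n-k+1)\sigma_{k-1}$. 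Together these produce the required ratio $G^{j_0 j_0}/G^{11}\geq c(n,k,l)$.

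For the companion bound $\sum_i F^{ii}\geq c_2$, I would invoke the concavity of $\tilde G:=G^{1/(k-l)}$ from Lemma \ref{concavity}. Its tangent-plane inequality at $\lambda$ evaluated at $(1,\ldots,1)\in\Gamma_k$, combined with the Euler identity $\sum_i\lambda_i G^{ii}=(k-l)G$, gives the scale-compatible estimate
\[T\geq(k-l)\Bigl[\tfrac{\binom{n}{k}}{\binom{n}{l}}\Bigr]^{1/(k-l)}G^{(k-l-1)/(k-l)}.\]
Since $G=F\geq\inf F>0$ along admissible solutions, the right-hand side is bounded below by $c(n,k,l,\inf F)$ (independent of $\inf F$ when $k-l=1$); multiplying by $(n\theta-\mu)>0$ finishes the second assertion.

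The principal obstacle is keeping the constant $c(n,k)-\tfrac{l(n-k+1)}{k(n-l+1)}$ strictly positive in the ratio step; this is the point at which the condition $k<n$ (rather than $k\leq n$) enters decisively. If the explicit constant supplied by Lemma \ref{reverse} is not strong enough against the Newton--Maclaurin quotient, one can either average the lower bound over the indices $j\in\{n-k+1,\ldots,n\}$, or sharpen the upper estimate on $G^{11}$ by invoking the full concavity inequality of Lemma \ref{concavity} in place of the crude identity $\sum_i\sigma_{k-1}(\lambda|i)=(n-k+1)\sigma_{k-1}$.
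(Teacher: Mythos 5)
Your overall architecture is close to the paper's: both reduce the ratio bound to showing that some $G^{jj}$ with $j\geq 2$ is comparable to $\sum G^{ii}$, and both use Lemma~\ref{reverse} at the end, with $k<n$ entering via the existence of an index $j\geq 2$ in $\{n-k+1,\ldots,n-l\}$. Your companion estimate $\sum_i F^{ii}\geq c$ via concavity of $G^{1/(k-l)}$ and the Euler identity is correct and is a legitimate alternative to the paper's direct Newton--Maclaurin computation.

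However, the ratio step has the genuine gap you yourself flagged, and it does not merely need tightening; your chain of inequalities produces a constant that can actually be zero. You estimate
\[\sigma_l G^{j_0j_0}=\sigma_{k-1}(\lambda|j_0)-G\,\sigma_{l-1}(\lambda|j_0)\geq c(n,k)\sigma_{k-1}-\frac{l(n-k+1)}{k(n-l+1)}\sigma_{k-1}\]
by first crudely replacing $\sigma_{l-1}(\lambda|j_0)$ by $\sigma_{l-1}(\lambda)$ and then applying Newton--Maclaurin to the full vector $\lambda$. For $(n,k,l)=(3,2,1)$, along the family $\lambda=(1-2s,s,s)\in\Gamma_2$ with $s\uparrow 2/3$, one has $\sigma_1(\lambda|2)/\sigma_1(\lambda)\to 1/3$, so the best permissible $c(3,2)$ in Lemma~\ref{reverse} is $1/3$, while $\tfrac{l(n-k+1)}{k(n-l+1)}=1/3$ as well; the bracket vanishes and the bound degenerates to $\sigma_l G^{j_0j_0}\geq 0$, which is useless. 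The paper avoids this competition by first invoking the exact algebraic identity (from $\sigma_m(\lambda)=\sigma_m(\lambda|j)+\lambda_j\sigma_{m-1}(\lambda|j)$)
\[\sigma_{k-1}(\lambda|j)\sigma_l(\lambda)-\sigma_k(\lambda)\sigma_{l-1}(\lambda|j)=\sigma_{k-1}(\lambda|j)\sigma_l(\lambda|j)-\sigma_k(\lambda|j)\sigma_{l-1}(\lambda|j),\]
and \emph{then} applying Newton--Maclaurin to the $(n-1)$-vector $(\lambda|j)$ (or simply using $\sigma_k(\lambda|j)\leq 0$ if $(\lambda|j)\notin\Gamma_k$), obtaining
\[\sigma_{k-1}(\lambda|j)\sigma_l(\lambda|j)-\sigma_k(\lambda|j)\sigma_{l-1}(\lambda|j)\geq\Bigl(1-\tfrac{l(n-k+1)}{k(n-l+1)}\Bigr)\sigma_{k-1}(\lambda|j)\sigma_l(\lambda|j),\]
whose prefactor $1-\tfrac{l(n-k+1)}{k(n-l+1)}$ is \emph{always} strictly positive for $l<k$. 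Lemma~\ref{reverse} then enters only afterwards, multiplicatively on $\sigma_{k-1}(\lambda|j)$ and $\sigma_l(\lambda|j)$, so its constant never has to beat the Newton--Maclaurin quotient. This identity-plus-NM-on-$(\lambda|j)$ step is the idea missing from your argument, and it is precisely the place where $k<n$ is used (so that $n-k+1\geq 2$, making both $\sigma_{k-1}(\lambda|2)$ and $\sigma_l(\lambda|2)$ controllable by Lemma~\ref{reverse}). A minor secondary issue: your last reduction $F^{11}\geq[(\theta-\mu)+\mu\epsilon]T\geq\mu\epsilon\,T$ silently assumes $\mu>0$; for $\mu\leq 0$ you must instead keep the $(\theta-\mu)T$ term, which is easy but should be stated.
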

\begin{remark}
This lemma was proved by Chen-Dong-Han in \cite{Chen-Dong-Han}, for $\theta=\mu=1$. It is plausible to think that the property should still hold for general $\theta,\mu$ satisfying $\theta \geq \mu$ and $\theta>0$, due to the linear structure. However, since it is one of the most essential ingredients in our derivations of a priori estimates, we shall provide a solid verification here.
\end{remark}

\begin{proof}
Our proof is adapted from \cite{Chen-Dong-Han} and is based on the following three basic facts: $\lambda \in \Gamma_k$, $\lambda_1 \leq \lambda_2 \leq \cdots \leq \lambda_n$ and
\[\frac{\partial \lambda_j}{\partial \kappa_i}=\theta-\mu \delta_{ij}.\] 

By a direct computation and lemma \ref{properties of sigma-k}, we have that
\begin{align*}
\sum_i \frac{\partial}{\partial \kappa_i} \left[\frac{\sigma_k(\lambda)}{\sigma_l(\lambda)}\right]&=\sum_i\sum_j\frac{\partial}{\partial \lambda_j} \left[\frac{\sigma_k(\lambda)}{\sigma_l(\lambda)}\right]\frac{\partial \lambda_j}{\partial \kappa_i} \\
&=(\theta n - \mu)\sum_{j=1}^{n} \frac{\partial}{\partial \lambda_j} \left[\frac{\sigma_k(\lambda)}{\sigma_l(\lambda)}\right] \\
&=(\theta n - \mu) \sum_{j=1}^{n} \frac{\sigma_{k-1}(\lambda|j)\sigma_l(\lambda)-\sigma_k(\lambda)\sigma_{l-1}(\lambda|j)}{\sigma_l(\lambda)^2} \\
&=(\theta n -\mu) \frac{(n-k+1)\sigma_{k-1}(\lambda)\sigma_l(\lambda)-(n-l+1)\sigma_k(\lambda)\sigma_{l-1}(\lambda)}{\sigma_l(\lambda)^2} \\
&\geq (\theta n -\mu) \frac{(n-k+1)\sigma_{k-1}(\lambda)\sigma_l(\lambda)-\frac{l}{k}(n-k+1)\sigma_{k-1}(\lambda)\sigma_{l}(\lambda)}{\sigma_l(\lambda)^2}\\
&=(\theta n -\mu)(n-k+1)\left(1-\frac{l}{k}\right)\frac{\sigma_{k-1}(\lambda)}{\sigma_l(\lambda)} \\
&\geq c(n,k,l,\theta,\mu)\left[\frac{\sigma_k(\lambda)}{\sigma_l(\lambda)}\right]^{1-\frac{1}{k-l}}
\end{align*} where we have applied lemma \ref{Newton-Maclaurin} twice; this proves the second inequality.

The other inequality needs a little bit more efforts. By the same computation, we have that 
\begin{align*}
\sum_i \frac{\partial}{\partial \kappa_i} \left[\frac{\sigma_k(\lambda)}{\sigma_l(\lambda)}\right]&=(\theta n -\mu) \frac{(n-k+1)\sigma_{k-1}(\lambda)\sigma_l(\lambda)-(n-l+1)\sigma_k(\lambda)\sigma_{l-1}(\lambda)}{\sigma_l(\lambda)^2} \\
&\leq (\theta n -\mu)(n-k+1)\frac{\sigma_{k-1}(\lambda)}{\sigma_l(\lambda)}
\end{align*} by using $l < k$ and the fact that $\sigma_j(\lambda)>0$ for $1 \leq j \leq k$.

The task is then reduced to show that 
\[\frac{\partial}{\partial \kappa_1}\left[\frac{\sigma_k(\lambda)}{\sigma_l(\lambda)}\right]\geq c(n,k,l,\theta, \mu) \frac{\sigma_{k-1}(\lambda)}{\sigma_l(\lambda)}.\]

Note that
\begin{align*}
\frac{\partial}{\partial \kappa_i}\left[\frac{\sigma_k(\lambda)}{\sigma_l(\lambda)}\right]&=\sum_{j} \frac{\partial}{\partial \lambda_j}\left[\frac{\sigma_k(\lambda)}{\sigma_l(\lambda)}\right]\frac{\partial \lambda_j}{\partial \kappa_i} \\
&=\theta\sum_{j \neq i}\frac{\partial}{\partial \lambda_j}\left[\frac{\sigma_k(\lambda)}{\sigma_l(\lambda)}\right]+\underbrace{(\theta-\mu)\frac{\partial}{\partial \lambda_i}\left[\frac{\sigma_k(\lambda)}{\sigma_l(\lambda)}\right]}_{\geq 0}\\
&\geq \theta\sum_{j \neq i}\frac{\partial}{\partial \lambda_j}\left[\frac{\sigma_k(\lambda)}{\sigma_l(\lambda)}\right] \\
&=\theta \sum_{j \neq i}\frac{\sigma_{k-1}(\lambda|j)\sigma_l(\lambda)-\sigma_k(\lambda)\sigma_{l-1}(\lambda|j)}{\sigma_{l}(\lambda)^2} \\
&=\theta \sum_{j \neq i}\frac{\sigma_{k-1}(\lambda|j)\sigma_l(\lambda|j)-\sigma_k(\lambda|j)\sigma_{l-1}(\lambda|j)}{\sigma_{l}(\lambda)^2}\\
\end{align*} where we have used lemma \ref{properties of sigma-k} to expand $\sigma_l$ and $\sigma_k$. Now, for each $j$, we have $\sigma_{l-1}(\lambda|j)>0$ since $\lambda \in \Gamma_k$. If $(\lambda|j) \not\in \Gamma_k$, then $\sigma_k(\lambda|j) \leq 0$ and it immediately follows that
\[\frac{\partial}{\partial \kappa_i}\left[\frac{\sigma_k(\lambda)}{\sigma_l(\lambda)}\right] \geq \theta \sum_{j \neq i}\frac{\sigma_{k-1}(\lambda|j)\sigma_l(\lambda|j)}{\sigma_{l}(\lambda)^2}, \quad 1 \leq i \leq n.\] While if $(\lambda|j) \in \Gamma_k$, then by lemma \ref{Newton-Maclaurin}, we have that
\begin{align*}
\frac{\partial}{\partial \kappa_i}\left[\frac{\sigma_k(\lambda)}{\sigma_l(\lambda)}\right]&=\theta \sum_{j \neq i}\frac{\sigma_{k-1}(\lambda|j)\sigma_l(\lambda|j)-\sigma_k(\lambda|j)\sigma_{l-1}(\lambda|j)}{\sigma_{l}(\lambda)^2} \\
&\geq \theta \sum_{j \neq i}\frac{\sigma_{k-1}(\lambda|j)\sigma_l(\lambda|j)-\frac{l(n-k+1)}{k(n-l+1)}\sigma_{k-1}(\lambda|j)\sigma_{l}(\lambda|j)}{\sigma_{l}(\lambda)^2}\\
&=\theta \left[1-\frac{l(n-k+1)}{k(n-l+1)}\right]\sum_{j \neq i}\frac{\sigma_{k-1}(\lambda|i)\sigma_l(\lambda|j)}{\sigma_l(\lambda)^2}.
\end{align*}

In either case, we have that
\[\frac{\partial}{\partial \kappa_i}\left[\frac{\sigma_k(\lambda)}{\sigma_l(\lambda)}\right] \geq c(n,k,l,\theta) \sum_{j \neq i}\frac{\sigma_{k-1}(\lambda|j)\sigma_l(\lambda|j)}{\sigma_{l}(\lambda)^2}, \quad 1 \leq i \leq n.\]

Finally, since $0 \leq l<k<n$, we have that 
\begin{equation}
n-l \geq n-k+1 \geq 2. \label{k<n}
\end{equation} Hence, by lemma \ref{reverse}, we have that
\begin{align*}
&\sigma_{k-1}(\lambda|2) \geq \sigma_{k-1}(\lambda|n-k+1) \geq c(n,k)\sigma_{k-1}(\lambda)\\
\quad \text{and} \quad &\sigma_l(\lambda|2) \geq \sigma_{l}(\lambda|n-l) \geq c(n,l)\sigma_l(\lambda).
\end{align*} Therefore, 
\begin{align*}
\frac{\partial}{\partial \kappa_1}\left[\frac{\sigma_k(\lambda)}{\sigma_l(\lambda)}\right] &\geq c(n,k,l,\theta) \sum_{j \neq 1}\frac{\sigma_{k-1}(\lambda|j)\sigma_l(\lambda|j)}{\sigma_{l}(\lambda)^2} \\
&\geq c(n,k,l,\theta) \frac{\sigma_{k-1}(\lambda|2)\sigma_l(\lambda|2)}{\sigma_{l}(\lambda)^2} \\
&\geq c(n,k,l,\theta) \frac{\sigma_{k-1}(\lambda)}{\sigma_{l}(\lambda)}
\end{align*} and the lemma is proved; note that the constant $c(n,k,l,\theta)$ may change from line to line but is still denoted by the same symbol.
\end{proof}

\begin{remark} \label{reason for k<n}
The condition $k<n$ is used in \eqref{k<n}.
\end{remark}

\begin{remark}
It may be worth noting that, in general, even when $k=n$, we always have that
\[F^{ii} \geq c(n,k,l,\theta,\mu) \sum_{j} F^{jj} \quad \text{for $i \geq 2$}.\] Indeed, we use lemma \ref{properties of sigma-k} to obtain that
\[n\sigma_{k-1}(\lambda|1) \geq \sum_{i=1}^{n} \sigma_{k-1}(\lambda|i)=(n-k+1)\sigma_{k-1}(\lambda)\] and so
\begin{align*}
\frac{\partial}{\partial \kappa_i}\left[\frac{\sigma_k(\lambda)}{\sigma_l(\lambda)}\right] &\geq c(n,k,l,\theta) \sum_{j \neq i}\frac{\sigma_{k-1}(\lambda|j)\sigma_l(\lambda|j)}{\sigma_{l}(\lambda)^2}\\
&\geq c(n,k,l,\theta) \frac{\sigma_{k-1}(\lambda|1)\sigma_l(\lambda|1)}{\sigma_{l}(\lambda)^2} \\
&\geq c(n,k,l,\theta)\frac{n-k+1}{n}\frac{n-l}{n}\frac{\sigma_{k-1}(\lambda)}{\sigma_{l}(\lambda)} \\
&=c(n,k,l,\theta)\frac{\sigma_{k-1}(\lambda)}{\sigma_l(\lambda)}.
\end{align*}
\end{remark}

\section{The Global Curvature Estimate} \label{global curvature estimate proof}
In this section, we obtain the global curvature estimate for equation \eqref{equation 1} with a general right-hand side $\Psi=\Psi(X,\nu)$ when $0 \leq l<k<n$, and with a gradient-independent right-hand side $\Psi=\Psi(X)$ when $k=n$, $l=n-1$, hence proving theorem \ref{curvature estimate}. As reviewed in the Introduction, some cases are already solved in the work of Chu-Jiao \cite{Chu-Jiao} and Chen-Tu-Xiang \cite{Chen-Tu-Xiang-JDE-2020, Chen-Tu-Xiang-preprint}, here the new contributions are the following.

\begin{theorem}
Let $1 \leq k<n$ and assume the same settings as in theorem \ref{existence}.
Suppose that $\Sigma$ is a closed star-shaped $(\eta,k)$-convex hypersurface in a warped product manifold $\overline{M}$, satisfying the curvature equation
\[\frac{\sigma_{k}}{\sigma_{k-1}}(\eta[\Sigma])=\Psi(X,\nu) \quad \text{for all $X \in \Sigma$}\] for some positive $\Psi \in C^2(\Gamma)$, where $\Gamma$ is an open neighborhood of the unit normal bundle of $\Sigma$ in $\overline{M} \times \bS^n$. Then there exists some $C>0$ depending only on $n, k, \norm{\Sigma}_{C^2}, \inf \Psi, \norm{\Psi}_{C^2}$ and the curvature $\overline{R}$ of $\overline{M}$ such that 
\[\max_{X \in \Sigma}|\kappa_{\max}(X)| \leq C.\]
\end{theorem}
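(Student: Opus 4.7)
The plan is to run the standard maximum-principle argument for Weingarten curvature estimates, specialized to the $(\eta,k)$-convex quotient with $l=k-1$ and adapted to the warped-product ambient geometry. On $\Sigma$ I would consider a test function of the shape
\[ Q(X) \;=\; \log \kappa_{\max}(X) \;+\; a\,\Phi(r(X)) \;+\; b\,\Upsilon(u(X)), \]
where $u=\langle V,\nu\rangle$ is the generalized support function associated with $V=\phi(r)\partial_r$, and $\Phi$, $\Upsilon$ are auxiliary $C^2$ functions to be pinned down so that, after applying the linearized operator $L:=F^{ij}\nabla_i\nabla_j$ together with the warped-product formulas for $\nabla^2 r$ and $\nabla^2 u$, the contribution $L(a\Phi+b\Upsilon)$ produces a positive leading term of order $\kappa_{\max}\sum_i F^{ii}$, together with a compensating $\sum_i F^{ii}\kappa_i^{\,2}$-piece that will absorb the bad Simons commutator generated by the geometric side. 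At a maximum point $X_0$ of $Q$ I would diagonalize $h_{ij}$ in a local orthonormal frame with $h_{11}(X_0)=\kappa_1=\kappa_{\max}$ and $\kappa_1\geq\cdots\geq\kappa_n$, using the usual perturbation of $\kappa_1$ to handle possible multiplicity.

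Evaluating $LQ(X_0)\leq 0$ amounts to expanding
\[L\log\kappa_1 \;=\; \frac{F^{ii} h_{11,ii}}{\kappa_1} \;-\; \frac{F^{ii} h_{11,i}^{\,2}}{\kappa_1^{\,2}} \;+\; \text{(nonneg. eigenvalue-perturbation)},\]
rearranging the fourth-order piece $F^{ii}h_{11,ii}$ via a Simons-type identity built from the Gauss, Codazzi, and Ricci equations of $\Sigma\hookrightarrow\overline M$, and replacing $F^{ii}h_{ii,11}$ by the twice-differentiated equation
\[ F^{ii}h_{ii,11} \;=\; -F^{ij,rs}h_{ij,1}h_{rs,1} \;+\; \Psi_{,11}, \qquad \Psi_{,11}=O(\kappa_1). \]
The Simons rearrangement, after using Euler's identity $\sum F^{ii}\kappa_i=\Psi$ (valid because the quotient is homogeneous of degree $1$ in $h$ when $k-l=1$), yields a $\kappa_1\Psi$-type positive term along with a bad $-\sum F^{ii}\kappa_i^{\,2}$-type term and bounded ambient-curvature errors stemming from the non-Codazzi-ness of $h_{ij}$ in $\overline M$. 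Crucially, the concavity reserve $-F^{ij,rs}h_{ij,1}h_{rs,1}\geq 0$ afforded by Lemma \ref{concavity} degenerates exactly in the regime $k-l=1$ and supplies no help, which is where the $k-l\geq 2$ arguments of Chen-Tu-Xiang collapse.

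The decisive step is the absorption of the remaining negative terms by the test-function contribution. The third-order bad term is immediately tamed by the first-order condition $\nabla_i Q(X_0)=0$, which gives $h_{11,i}/\kappa_1 = -a\Phi'(r)\,r_i - b\Upsilon'(u)\,u_i = O(1)$ for every $i$, so that
\[ \sum_i F^{ii}\,\frac{h_{11,i}^{\,2}}{\kappa_1^{\,2}} \;\leq\; C\sum_i F^{ii}. \]
The Simons bad term $-\sum F^{ii}\kappa_i^{\,2}$ is absorbed by $L\Upsilon(u)$: the standard warped-product identity for the Hessian of the support function gives $Lu = \Psi - u\sum F^{ii}\kappa_i^{\,2} + \text{l.o.t.}$, so with the choice $\Upsilon(u)=-\log u$ (using $u>0$ on starshaped $\Sigma$), $bL\Upsilon$ contributes exactly $+b\sum F^{ii}\kappa_i^{\,2}$. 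Likewise, with $\Phi'>0$ the identity $Lr=h(r)\sum F^{ii}+\text{l.o.t.}$ (recall $h(r)=\phi'/\phi>0$) makes $aL\Phi$ contribute a positive $\sum F^{ii}$-multiple. At this stage I invoke the Chen--Dong--Han key Lemma \ref{key lemma}, which supplies $F^{11}\geq c(n,k,l)\sum_i F^{ii}$ in the regime $0\leq l<k<n$: it forces all $F^{ii}$ to be mutually comparable, and in particular guarantees that the non-Codazzi ambient-curvature errors (and the residual $O(\kappa_1)$ contributions from $\Psi_{,11}$ and from cross terms) are of lower order compared to the principal good term $a\cdot h(r)\kappa_1\sum F^{ii}$ produced when we further multiply through by $\kappa_1$. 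Choosing $a$, $b$ large in terms of the admissible data, $LQ(X_0)\leq 0$ then forces $\kappa_1(X_0)\leq C$, and the definition of $Q$ delivers the stated global bound.

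The main obstacle I foresee is the careful bookkeeping of the non-Codazzi errors in $\overline M$: unlike in $\bR^{n+1}$, the commutators $h_{11,ii}-h_{ii,11}$ carry additional terms involving the ambient Riemann tensor of $\overline M$ and its first derivatives, and these must be kept strictly of lower order than $\kappa_1\sum F^{ii}$ throughout, using only the $C^0,C^1$ bounds from \cite{Chen-Tu-Xiang-preprint} and the compactness $r(\Sigma)\subset[r_1,r_2]$. A second subtle point is that the argument hinges irreducibly on Lemma \ref{key lemma}: it is exactly what unlocks the $l=k-1$ regime, and its validity requires $k<n$ by Remark \ref{reason for k<n}, which is precisely why the companion $(n,n-1)$-case is left open.
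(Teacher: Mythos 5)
Your overall architecture coincides with the paper's: a logarithmic test function in $\kappa_{\max}$ corrected by the generalized support function $\tau=\langle V,\nu\rangle$, the Brendle--Choi--Daskalopoulos treatment of multiplicity, the non-Codazzi commutator identity, the twice-differentiated equation with the concavity of $G$, the Hessian formula for $\tau$ to generate $+\sum F^{ii}\kappa_i^2$, and finally Lemma \ref{key lemma} ($F^{11}\geq c\sum F^{ii}$, valid only for $k<n$) to convert that surplus into $\kappa_1^2\sum F^{ii}$. That is exactly the paper's strategy.

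However, there is a genuine gap in your absorption of the gradient and third-order terms. You claim that the first-order condition gives $h_{11,i}/\kappa_1=-a\Phi'(r)r_i-b\Upsilon'(u)u_i=O(1)$, hence $\sum_iF^{ii}h_{11,i}^2/\kappa_1^2\leq C\sum F^{ii}$. This is false: by Lemma \ref{geometric formulas}, $u_i=\nabla_{E_i}\tau=\sum_j(\nabla_{E_j}\Phi)h_{ij}=\Phi_i\kappa_i$ at the diagonalizing frame, so $u_i=O(\kappa_i)$ and $h_{11,i}/\kappa_1=O(\kappa_i)$, not $O(1)$. Consequently $\sum F^{ii}h_{11,i}^2/\kappa_1^2$ is of order $b^2\sum F^{ii}\kappa_i^2$ --- the same order as your good term $b\sum F^{ii}\kappa_i^2$ --- and it enters with the wrong sign. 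Tracking constants, the net coefficient of $\sum F^{ii}\kappa_i^2$ in your scheme is roughly $(b-1)-(b^2-b)(1+\epsilon)\sup_i\Phi_i^2/\inf\tau^2$ after using the positive piece $+b\,F^{ii}u_i^2/u^2$ from $\nabla^2(-b\log u)$; since you need $b>1$ to beat the Simons term $-\sum F^{ii}\kappa_i^2$, and since $\sup_i\Phi_i^2/\tau^2$ is not small in general (it equals $\phi^2/\tau^2-1$ up to the frame choice), choosing $b$ \emph{large}, as you propose, makes the balance strictly worse and the argument does not close. The paper sidesteps this entirely by taking the support-function term with coefficient exactly one but \emph{shifted}, i.e.\ $Q=\log\kappa_{\max}-\log(\tau-a)$: then $(\kappa_1)_i/\kappa_1=\tau_i/(\tau-a)$ exactly, the two gradient-squared terms cancel identically in the second-order condition, and the needed surplus appears automatically as $\frac{a}{\tau-a}\sum F^{ii}\kappa_i^2$ from $\tau_{ii}\ni-\tau\kappa_i^2$. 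A secondary inaccuracy: $\Psi_{,11}$ is $O(\kappa_1^2)$ plus a third-order piece $\sum_i h_{11,i}(d_\nu\Psi)(E_i)$ (not $O(\kappa_1)$); the latter must be paired with the $\Psi_k\Phi_k$ term coming from $\tau_{ii}$ (or crudely bounded by $C\kappa_1$ using $\sum_i|\kappa_i|\leq C\kappa_1$ for $\eta\in\Gamma_1$), and the $O(\kappa_1^2)$ piece is harmless only after dividing by $\kappa_1$. Also, Lemma \ref{concavity} is not useless when $k-l=1$: it is precisely what discards $-G^{pp,qq}\eta_{pp1}\eta_{qq1}$ (indeed it gives $\geq0$ in that regime). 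With the test function corrected as in the paper, the rest of your outline is sound.
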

\begin{theorem}
Assume the same settings as in theorem \ref{existence}.
Suppose that $\Sigma$ is a closed star-shaped $(\eta,k)$-convex hypersurface in a warped product manifold $\overline{M}$, satisfying the curvature equation
\[\frac{\sigma_{n}}{\sigma_{n-1}}(\eta[\Sigma])=\Psi(X) \quad \text{for all $X \in \Sigma$}\] for some positive $\Psi \in C^2(\Gamma)$, where $\Gamma$ is an open neighborhood of the unit normal bundle of $\Sigma$ in $\overline{M} \times \bS^n$. Then there exists some $C>0$ depending only on $n, k, \norm{\Sigma}_{C^2}, \inf \Psi, \norm{\Psi}_{C^2}$ and the curvature $\overline{R}$ of $\overline{M}$ such that 
\[\max_{X \in \Sigma}|\kappa_{\max}(X)| \leq C.\]
\end{theorem}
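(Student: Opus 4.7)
The plan is to adapt the proof of theorem \ref{curvature estimate} via a maximum-principle argument on an appropriate auxiliary function on $\Sigma$, with the failure of lemma \ref{key lemma} at $k=n$ (see remark \ref{reason for k<n}) compensated by two features unavailable in the Euclidean setting: an \emph{extra positive curvature term} produced by the ambient Riemann tensor in the second-order commutation of the second fundamental form (lemma \ref{commutator}), and the absence of $\nu$-dependence on the right-hand side, which removes the matching $\Psi_{\nu\nu}|\nabla\nu|^2\sim\kappa_1^2$ contribution when the equation is differentiated twice.

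Following \cite{Chen-Tu-Xiang-preprint}, \cite{Zhou}, and \cite{Chu-Jiao}, the natural test function is
\[W \;=\; \log \kappa_{\max} \;+\; N\,\varphi(r) \;-\; \beta\,\log\langle V,\nu\rangle,\]
where $V=\phi(r)\partial_r$, $\varphi$ is a suitable primitive of the warping data, and $N,\beta>0$ are to be chosen. Compactness and star-shapedness make $\langle V,\nu\rangle>0$ and force $W$ to attain its maximum at some $X_0\in\Sigma$. After selecting a principal frame so that $h_{ij}(X_0)$ is diagonal with $h_{11}=\kappa_1=\kappa_{\max}$, and removing the eigenvalue-multiplicity issue by a Gerhardt--Andrews type perturbation of $\log\kappa_{\max}$, the first-order criticality $\nabla_i W=0$ and the second-order inequality $\sum F^{ii}W_{ii}\leq 0$ become available at $X_0$.

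The next step is to expand $\sum F^{ii}W_{ii}$ term by term. The central ingredient is the commutation formula (lemma \ref{commutator}), schematically
\[h_{11,ii}\;=\;h_{ii,11}+(\kappa_1-\kappa_i)\kappa_1\kappa_i+(\kappa_1-\kappa_i)\overline{R}_{1ii1}+\nabla\overline{R}\text{-terms},\]
in which the summand $(\kappa_1-\kappa_i)\overline{R}_{1ii1}$ is the extra term absent in the Euclidean setting. Differentiating $F(h_{ij})=\Psi(X)$ twice and invoking lemma \ref{concavity} applied to $G=\sigma_n/\sigma_{n-1}$ (which is concave on $\Gamma_n$ as the harmonic mean of $\lambda$) yields a controlled bound on $\sum F^{ii}h_{ii,11}$; critically, the hypothesis $\Psi=\Psi(X)$ ensures that no $\Psi_\nu h_{pp,1}$ or $\Psi_{\nu\nu}|\nabla\nu|^2$ terms are generated. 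The auxiliary terms involving $\varphi(r)$ and $\langle V,\nu\rangle$ are handled via the standard warped-product Hessian identities $\overline{\nabla}^2 r=h(r)(\overline{g}-dr\otimes dr)$ and $\overline{\nabla}V=\phi'\,\overline{g}$; for $N,\beta$ sufficiently large they contribute a further positive multiple of $\sum F^{ii}$ modulo a controlled constant.

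The hard part will be to absorb the third-order gradient terms $\sum F^{ii} h_{11i}^2/\kappa_1^2$ arising from $(\log\kappa_1)_{ii}$ and from the cross products with the support-function term. In the $k<n$ case handled by theorem \ref{curvature estimate}, this absorption uses the Chen--Dong--Han bound $F^{11}\geq c\sum F^{ii}$ (lemma \ref{key lemma}); here that route is blocked because lemma \ref{key lemma} fails at $k=n$. Instead, these terms must be absorbed wholly into the positive $\kappa_1\sum F^{ii}$ contribution extracted from $\overline{R}_{1ii1}$ together with the warped-product auxiliary terms, plausibly after a dichotomy according to whether the smallest principal curvature $\kappa_n$ is comparable to $\kappa_1$. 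Once this absorption is carried out one obtains $\kappa_1\leq C$ with $C$ depending only on the stated data, completing the proof.
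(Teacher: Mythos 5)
Your proposal misidentifies the source of the crucial positive term, and this is a genuine gap. You claim the absorption is achieved through ``an extra positive curvature term produced by the ambient Riemann tensor,'' specifically the summand $(\kappa_1-\kappa_i)\overline{R}_{1ii1}$ in the commutation formula. This term has indeterminate sign: $\kappa_1-\kappa_i\geq 0$, but $\overline{R}_{1ii1}$ is a sectional curvature of the ambient warped product and can be negative; the paper in fact treats the analogous summand $2(h_{ii}-h_{11})\overline{R}_{i1i1}$ as a \emph{bad} term, bounding it from below by $-C\kappa_1\sum F^{ii}$ rather than extracting anything positive from it. The positive term the paper actually exploits is $\kappa_1^2\sum F^{ii}\kappa_i$, which arises from the Gauss-type algebraic summand $h_{11}^2h_{ii}$ in the interchange $h_{11ii}=h_{ii11}-h_{11}h_{ii}^2+h_{11}^2h_{ii}+\ldots$ (lemma \ref{commutator}). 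Since $\sum F^{ii}\kappa_i = G = \Psi \geq \inf\Psi>0$, this gives a positive contribution $\geq C\kappa_1^2$. Crucially this term is present for hypersurfaces in \emph{any} ambient space including Euclidean space; it is the distinguishing feature of curvature equations as opposed to Hessian equations (compare remark \ref{key observation}), not a warped-product phenomenon as you suggest.

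You also omit the other key ingredient the paper uses for $k=n$, $l=n-1$: the special algebraic fact that $\sum F^{ii}\leq C$ for the quotient $\sigma_n/\sigma_{n-1}$, which makes the remaining negative terms $-C\sum F^{ii}$ harmless constants. With the correct positive term $\kappa_1\Psi\geq C\kappa_1$ (after dividing by $\kappa_1$) and this boundedness, the second-order inequality reduces to $0\geq C\kappa_1-C$ directly, without the dichotomy or the augmented test function $N\varphi(r)-\beta\log\langle V,\nu\rangle$ you propose. The paper uses the same simple test function $\log\kappa_{\max}-\log(\tau-a)$ as in the $k<n$ case, and the ``hard absorption step'' you leave open is precisely where your identification of the positive term fails; with the Gauss term in hand, the step is elementary.
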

\begin{remark}
When $k=n$, $l=n-1$, the estimate is currently still unknown for a general right-hand side $\Psi=\Psi(X,\nu)$.
\end{remark}

Before we begin the proof, we first review the geometry of hypersurfaces in a warped product manifold. The contents are extracted from \cite{Chen-Li-Wang, Chen-Tu-Xiang-preprint, Guan-Li-Wang, DBD}, but note the notational differences in here.

Let $(M,g')$ be a compact Riemannian manifold and let $I=(b_0,b), b \leq \infty$ be an open interval in $\bR$. We consider the warped product manifold $\overline{M}=I \times_{\phi} M$ endowed with the metric
\[\overline{g}=dr^2+\phi^2(r)g'\] where $\phi: I \to \bR$ is a smooth positive function.

The metric in $\overline{M}$ is denoted by $\langle \cdot,\cdot\rangle$ and the corresponding connection is denoted by $\overline{\nabla}$. The usual connection in $M$ is denoted by $\nabla'$. The curvature tensors in $M$ and $\overline{M}$ are denoted by $R$ and $\overline{R}$, respectively.

Let $\{e_1,\ldots,e_n\}$ be an orthonormal frame field in $M$. An orthonormal frame in $\overline{M}$ may be defined by $\overline{e}_i=\frac{1}{\phi}e_i, 1 \leq i \leq n$ and $\overline{e}_0=\frac{\partial}{\partial r}$.

A star-shaped compact hypersurface $\Sigma$ in $\overline{M}$ can be represented as a smooth radial graph 
\[\Sigma=\{X(z)=(r(z),z): z \in M\}\] for a smooth function $r: M \to I$, whose tangent space is spanned at each point by
\[X_i=\phi\overline{e}_i+r_i\overline{e}_0,\] where $r_i$ are the components of the differential $dr=r_i\theta^i$. The outward unit normal is given by 
\[\nu=\frac{1}{\sqrt{\phi^2+|\nabla' r|^2}}\left(\phi\overline{e}_0-\sum r^i \overline{e}_i\right),\] where $\nabla'r=\sum r^ie_i$.

\begin{lemma}\label{commutator}
Let $X_0$ be a point of $\Sigma$ and $\{E_0=\nu, E_1,\ldots,E_n\}$ be an adapted frame field such that each $E_i$ is a principal direction and connection forms $\omega_{i}^{k}=0$ at $X_0$. Let $h_{ij}$ be the second fundamental form of $\Sigma$. Then at $X_0$, we have that
\begin{align*}
h_{ijk}=  &\ h_{ikj}+\overline{R}_{0ijk}\\
h_{ii11}= &\ h_{11ii}+h_{11}h_{ii}^2-h_{11}^2h_{ii}+2(h_{ii}-h_{11})\overline{R}_{i1i1}\\
&+h_{11}\overline{R}_{i0i0}-h_{ii}\overline{R}_{1010}+\overline{R}_{i1i0;1}-\overline{R}_{1i10;i}.
\end{align*}
\end{lemma}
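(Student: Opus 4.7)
The plan is a bookkeeping exercise combining the Codazzi equation, the intrinsic Ricci identity for commuting second covariant derivatives, the Gauss equation, and the Weingarten formula, all evaluated at the point $X_0$ where the adapted frame has been chosen to diagonalize $h$ and to kill the intrinsic connection forms.

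For the first identity, I would simply invoke the Codazzi equation in the ambient manifold $\overline{M}$ for a hypersurface: in index notation,
\[\nabla_k h_{ij} - \nabla_j h_{ik} = \overline{R}(\nu, E_i, E_j, E_k),\]
which is precisely $h_{ijk} = h_{ikj} + \overline{R}_{0ijk}$ upon identifying $\nu$ with $E_0$.

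For the second identity, I would use the telescoping decomposition
\[h_{ii11} - h_{11ii} = (h_{ii11} - h_{i1i1}) + (h_{i1i1} - h_{i11i}) + (h_{i11i} - h_{11ii}).\]
The outer two pieces come from applying the first identity and then differentiating once more tangentially. The subtle point is that the differentiated ambient curvature component $\overline{R}_{0\cdot\cdot\cdot}$ involves the normal $E_0 = \nu$, so one must expand using the Weingarten formula $\overline{\nabla}_{E_j} \nu = -h_{jk} E_k$ together with $\overline{\nabla}_{E_j} E_i = h_{ij}\nu$ at $X_0$. This expansion is what generates the purely ambient curvature terms $h_{11}\overline{R}_{i0i0}$ and $-h_{ii}\overline{R}_{1010}$, and, after the pair-swap and antisymmetries of $\overline{R}$, leaves behind the covariant derivatives $\overline{R}_{i1i0;1}$ and $-\overline{R}_{1i10;i}$. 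The middle piece is the intrinsic Ricci identity applied to the tensor $h_{ab}$, which introduces the intrinsic Riemann tensor $R^{\Sigma}$; converting this to the ambient curvature via the Gauss equation
\[R^{\Sigma}_{abcd} = \overline{R}_{abcd} + h_{ac}h_{bd} - h_{ad}h_{bc},\]
and using that $h_{ij}$ is diagonal at $X_0$, the surviving contributions are the cubic terms $h_{11}h_{ii}^2 - h_{11}^2 h_{ii}$ together with the coefficient $2(h_{ii} - h_{11})\overline{R}_{i1i1}$, since the intrinsic $\overline{R}$-contributions from the two applications of Gauss add while the $h\cdot h$ contributions combine with signs.

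The main obstacle will be careful sign and index bookkeeping: Codazzi, Ricci, Gauss, and Weingarten each come with a choice of sign convention, and one must match them consistently with the symmetries of $\overline{R}$ (for example the pair-swap identity $\overline{R}_{01i1} = \overline{R}_{1i10}$) so that the final cancellations produce exactly the combination stated. Once this is done, adding the three pieces yields the identity; the only genuinely nontrivial ingredient beyond standard hypersurface geometry is recognizing how the normal index $0$ in the differentiated $\overline{R}_{0\cdot\cdot\cdot}$ forces the Weingarten correction to produce the mixed curvature terms $\overline{R}_{i0i0}$ and $\overline{R}_{1010}$.
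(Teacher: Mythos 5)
The paper does not actually prove this lemma; it is stated without argument and the surrounding text says its contents are extracted from the cited sources (Chen--Li--Wang, Guan--Li--Wang, Chen--Tu--Xiang), so there is no in-paper proof to compare against. Your sketch is the standard Simons-type derivation that appears in those references, and your three-way telescoping
\[
h_{ii11}-h_{11ii}=(h_{ii11}-h_{i1i1})+(h_{i1i1}-h_{i11i})+(h_{i11i}-h_{11ii})
\]
is exactly right: the outer two pieces reduce by Codazzi to $\nabla_1\bigl(\overline{R}_{0ii1}\bigr)$ and $\nabla_i\bigl(\overline{R}_{01i1}\bigr)$, the middle piece is the intrinsic Ricci commutator on the $(0,2)$-tensor $h$, and you correctly identify that differentiating $\overline{R}_{0\,\cdot\,\cdot\,\cdot}$ requires the Weingarten and Gauss formulas because the normal $\nu=E_0$ is not parallel along $\Sigma$, which is precisely what produces the $h\cdot\overline{R}$ and $\overline{R}_{;\,\cdot}$ terms.

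One attribution in your sketch is off, though it does not break the argument. You claim that the full coefficient $2(h_{ii}-h_{11})\overline{R}_{i1i1}$ comes from the middle Ricci piece ``since the intrinsic $\overline{R}$-contributions from the two applications of Gauss add.'' They do not: after using the antisymmetry $R^{\Sigma}_{1ii1}=-R^{\Sigma}_{1i1i}$, the two Gauss contributions in the Ricci commutator partially cancel and yield only a \emph{single} factor $(h_{ii}-h_{11})\overline{R}_{i1i1}$ together with the cubic terms. The other copy of $(h_{ii}-h_{11})\overline{R}_{i1i1}$ accumulates from the Weingarten corrections in the two outer pieces: expanding $\nabla_1\overline{R}_{0ii1}$ gives a $\mp h_{11}\overline{R}_{i1i1}$ term from $\overline{\nabla}_{E_1}\nu=-h_{1k}E_k$, and expanding $\nabla_i\overline{R}_{01i1}$ similarly gives $\pm h_{ii}\overline{R}_{i1i1}$. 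So the factor of $2$ is split evenly between the middle piece and the sum of the two outer pieces, not produced entirely by Gauss. You should make that accounting explicit when you fill in the signs, because otherwise you will find the Weingarten expansions leaving unexplained leftover $h\,\overline{R}_{i1i1}$ terms. (Your overall signs will also come out opposite to the paper's display unless you match the paper's curvature-sign convention, but that is purely a convention mismatch, not an error.)
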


\begin{remark}
The frame field $E_i$ may be obtained from the adapted frame field $\nu, X_1,\ldots,X_n$ by the Gram-Schmidt procedure. Since this last frame depends only on $r$ and $\nabla' r$, we may say that the components of $\overline{R}$ and $\overline{\nabla}\bar{R}$ calculated in terms of the frame $\{E_i\}$ depend only on $r$ and $\nabla' r$.
\end{remark}

The position vector $V=\phi(r)\partial_r$ is a conformal Killing vector field and the support function is given by $\tau=\langle V,\nu\rangle$ where $\nu$ is the outward unit normal of $\Sigma$. Also, we define 
\[\Phi(r)=\int_{0}^{r} \phi(\rho)\ d\rho.\]

\begin{lemma}\label{geometric formulas}
Let $\tau$ be defined as above. Then at a local orthonormal frame, we have 
\begin{align*}
\nabla_{E_i}\Phi&=\phi\langle \overline{e}_0,E_i\rangle E_i,\\
\nabla_{E_i}\tau&=\sum_{j}(\nabla_{E_j} \Phi)h_{ij},\\
\nabla_{E_i,E_j}^{2}\tau&=\sum_{k}(h_{ijk}-\overline{R}_{0ijk})\nabla_{E_k}\Phi+\phi'h_{ij}-\tau\sum_{k}h_{ik}h_{kj},\\
\nabla_{E_i,E_j}^2\Phi&=\phi'(r)g_{ij}-\tau h_{ij}.
\end{align*}
\end{lemma}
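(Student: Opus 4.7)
The proof rests on two structural facts about the warped product $\overline{M}$. First, the radial vector field $V = \phi(r)\overline{e}_0$ is a conformal Killing field with $\overline{\nabla}_X V = \phi'(r)\,X$ for every tangent vector $X$ on $\overline{M}$. Second, since $\Phi'=\phi$ by definition, the ambient gradient satisfies $\overline{\nabla}\Phi = \phi\,\overline{e}_0 = V$. The first formula is then immediate: $\nabla_{E_i}\Phi = \langle \overline{\nabla}\Phi, E_i\rangle = \phi\langle \overline{e}_0, E_i\rangle$.

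For the fourth formula, the plan is to compute the intrinsic Hessian of $\Phi|_\Sigma$ by bringing the ambient gradient down via the Gauss formula $\overline{\nabla}_{E_j}E_i = \nabla_{E_j}E_i - h_{ij}\nu$. At the distinguished point $X_0$ where the connection forms vanish, $\nabla^2_{ij}\Phi = E_j\langle V, E_i\rangle$, which splits by the product rule into a conformal Killing piece $\langle \overline{\nabla}_{E_j}V, E_i\rangle = \phi'g_{ij}$ and a normal piece $\langle V, -h_{ij}\nu\rangle = -\tau h_{ij}$.

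The second formula is derived in the same spirit: differentiate $\tau = \langle V,\nu\rangle$ along $E_i$, noting that the term $\phi'\langle E_i,\nu\rangle$ drops because $E_i$ is tangent to $\Sigma$. The remaining piece $\langle V, \overline{\nabla}_{E_i}\nu\rangle$ is expanded via the Weingarten relation $\overline{\nabla}_{E_i}\nu = \sum_j h_{ij}E_j$ and identified, using the first formula, as $\sum_j h_{ij}\langle V,E_j\rangle = \sum_j h_{ij}\,\nabla_{E_j}\Phi$.

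The third formula carries the main technical content. I would differentiate the expression in the second formula once more at $X_0$, substitute the already-established identity $E_j(\nabla_{E_l}\Phi) = \nabla^2_{jl}\Phi = \phi'\delta_{jl} - \tau h_{jl}$, and isolate the cubic-in-$h$ term $\sum_l (\nabla_{E_l}\Phi)\,\nabla_{E_j} h_{li}$. The obstruction is that the derivative of $h$ here appears with its indices in the wrong order for the final statement; to bring it into the form $h_{ijk}$ one invokes the Codazzi-type identity $h_{ijk} = h_{ikj} + \overline{R}_{0ijk}$ from Lemma \ref{commutator}, first using the symmetry of $h$ to write $h_{lij} = h_{ilj}$ and then swapping the last two derivative indices. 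The correction produced is $\overline{R}_{0ilj}$, which the antisymmetry $\overline{R}_{0ikj} = -\overline{R}_{0ijk}$ of the ambient curvature tensor turns into $-\overline{R}_{0ijk}$, matching the stated formula. Tracking this index bookkeeping and the sign from the curvature symmetry is the one place where care is genuinely required; once it is done, combining the three contributions assembles the claimed expression for $\nabla^2_{ij}\tau$.
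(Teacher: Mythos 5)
Your derivation is correct, and it is the standard one: the paper itself does not prove this lemma (it is quoted from the cited references on hypersurfaces in warped products), so your argument simply supplies the omitted computation via $\overline{\nabla}\Phi=V$, the conformal Killing identity $\overline{\nabla}_XV=\phi'X$, and the Gauss--Weingarten relations. One small remark: in the third formula you reach $h_{ikj}$ and convert it by first symmetrizing and then using the antisymmetry $\overline{R}_{0ikj}=-\overline{R}_{0ijk}$; this is fine, but it is slightly more direct to just rearrange the paper's Lemma \ref{commutator} as $h_{ikj}=h_{ijk}-\overline{R}_{0ijk}$ and substitute, which lands immediately on the stated expression.
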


We will work with the operators $F$ and $G$ as fixed in the Preliminaries 
\begin{equation}
F(\kappa)=G(\lambda)=\frac{\sigma_k(\eta[\Sigma])}{\sigma_l(\eta[\Sigma])}=\Psi(V,\nu). \label{curvature equation}
\end{equation} Recall that
\[G^{ij}=\frac{\partial}{\partial \eta_{ij}} \left[\frac{\sigma_k(\eta[\Sigma])}{\sigma_l(\eta[\Sigma])}\right] \quad \text{and} \quad F^{ij}=\frac{\partial}{\partial h_{ij}} \left[\frac{\sigma_k(\eta[\Sigma])}{\sigma_l(\eta[\Sigma])}\right].\]

Due to the assumption of a $C^1$ estimate, there exists some $C>0$ depending on $\inf_{\Sigma} r$ and $\norm{r}_{C^1}$ such that
\[\frac{1}{C} \leq \inf_{\Sigma} \tau \leq \tau \leq \sup_{\Sigma} \tau \leq C.\]

\begin{proof}[Proof of theorem \ref{curvature estimate}]
We first consider the case $0 \leq l < k <n$ for a general right-hand side $\Psi=\Psi(X,\nu)$.

\subsection*{When $0<l<k<n$ and $\Psi=\Psi(X,\nu)$}
Let $\kappa_{\max}$ denote the largest principal curvature of $\Sigma$. Since $\eta \in \Gamma_{k} \subseteq \Gamma_{1}$, we have that 
\[(n-1)\sigma_1(\kappa)=\sigma_1(\eta)>0.\] Hence it is sufficient to prove $\kappa_{\max}$ is bounded from above. We consider the following test function 
\[Q=\log \kappa_{\max} - \log (\tau-a)\] where $a>0$ is some number satisfying $a<\frac{1}{2}\inf \tau$.

Suppose that $Q$ attains its maximum at some point $X_0$. We choose a local orthonormal frame $\{E_1,\ldots,E_n\}$ around $X_0$ such that
\[h_{ij}=\kappa_i\delta_{ij} \quad \text{and} \quad \kappa_{\max}=\kappa_1\geq \kappa_2 \geq \cdots \geq \kappa_n.\]

In case that $\kappa_1$ has multiplicity $m>1$ i.e.
\[\kappa_1=\kappa_2=\cdots = \kappa_m>\kappa_{m+1} \geq \cdots \geq \kappa_n,\] we may apply \footnote{Alternatively, one may also apply a standard perturbation argument; see e.g. \cite{Chu,Bin}.} a smooth approximation lemma \cite[Lemma 5]{BCD} of Brendle-Choi-Daskalopoulos to have
\begin{align}
\delta_{kl}\cdot (\kappa_{1})_i&=h_{kli}, \quad 1 \leq k,l\leq m \label{curvature approximation 1}\\
(\kappa_1)_{ii}&\geq h_{11ii}+2\sum_{p>m}\frac{h_{1pi}^2}{\kappa_1-\kappa_p}. \nonumber
\end{align} Thus, at $X_0$, we have

\begin{align}
0&=\frac{(\kappa_1)_i}{\kappa_1}-\frac{\tau_i}{\tau-a}=\frac{h_{11i}}{\kappa_1}-\frac{\tau_i}{\tau-a}, \label{1st order critical}\\
0&\geq  \frac{(\kappa_1)_{ii}}{\kappa_1}-\frac{(\kappa_1)_{i}^2}{\kappa_{1}^2}-\frac{\tau_{ii}}{\tau-a}+\frac{\tau_{i}^2}{(\tau-a)^2} \nonumber\\
&\geq \frac{h_{11ii}}{\kappa_1}-\frac{\tau_{ii}}{\tau-a}. \label{2nd order critical}
\end{align}
Contracting \eqref{2nd order critical} with $F^{ii}$, we have
\begin{equation}\label{2nd order critical 1}
0 \geq \frac{F^{ii} h_{11ii}}{\kappa_1}-\frac{F^{ii} \tau_{ii}}{\tau-a}.
\end{equation}

By the commutator formula (lemma \ref{commutator}), we have 
\begin{gather} \label{interchanging}
\begin{split}
h_{ii11}=&\ h_{11ii}+h_{11}h_{ii}^2-h_{11}^2h_{ii}+2(h_{ii}-h_{11})\overline{R}_{i1i1} \\
&+h_{11}\overline{R}_{i0i0}-h_{ii}\overline{R}_{1010}+\overline{R}_{i1i0;1}-\overline{R}_{1i10;i}
\end{split}
\end{gather} from which it follows that 
\begin{align*}
F^{ii}h_{11ii}&=F^{ii}h_{ii11}-\kappa_1\sum F^{ii}\kappa_{i}^2+\kappa_{1}^2 \sum F^{ii} \kappa_i+2\sum F^{ii}(h_{11}-h_{ii})\overline{R}_{i1i1}\\
&\quad -\kappa_1\sum F^{ii} \overline{R}_{i0i0}+\sum F^{ii} h_{ii}\overline{R}_{1010}+\sum F^{ii}(\overline{R}_{1i10;i}-\overline{R}_{i1i0;1})\\
&\geq F^{ii}h_{ii11}-\kappa_1\sum F^{ii} \kappa_{i}^2 - C\kappa_1\sum F^{ii}-C
\end{align*} for some $C>0$ depending on $\inf \Psi$ and $\overline{R}$, where we have used 
\begin{align*}
\sum F^{ii}\kappa_i&=\sum_i \left(\sum_{k} G^{kk}-G^{ii}\right)\left(\frac{1}{n-1}\sum_{l} \lambda_l - \lambda_i\right) \\
&=\sum_{i} G^{ii}\lambda_i = G = \Psi \quad \text{by lemma 2.6 in \cite{Chu-Dinew}.}
\end{align*}

To estimate the $h_{ii11}$ term, we differentiate equation \eqref{curvature equation} twice, and we have 
\begin{equation}
F^{ii}h_{ii1}=G^{ii}\eta_{ii1}=\Psi_1=h_{11}d_{\nu}\Psi(E_1)+d_{V}\Psi(\nabla_{E_1}V) \label{curvature differentiate once}
\end{equation} and
\begin{align*}
G^{ii}\eta_{ii11}+G^{ij,kl}\eta_{ij1}\eta_{kl1}&=\Psi_{11} \\
&=d_{V}^2\Psi(\nabla_{E_1}V,\nabla_{E_1}V)+d_V\Psi(\nabla_{E_1,E_1}^{2}V) \\
&\ + 2d_Vd_\nu \Psi(\nabla_{E_1}V,\nabla_{E_1}\nu)+d_{\nu}^2\Psi(\nabla_{E_1}\nu,\nabla_{E_1}\nu)+d_{\nu}\Psi(\nabla_{E_1,E_1}^2\nu)\\
&\geq \sum_{i} h_{1i1}(d_{\nu}\Psi)(E_i)-C\kappa_{1}^2-C\kappa_1-C \\
&= \sum_{i} h_{11i}(d_{\nu}\Psi)(E_i)+\sum_i \overline{R}_{01i1}(d_{\nu}\Psi)(E_i)-C\kappa_{1}^2-C\kappa_1-C \\
&\geq \sum_{i} h_{11i}(d_{\nu}\Psi)(E_i)-C\kappa_{1}^2-C\kappa_1-C
\end{align*} for some $C>0$ depending on $\norm{\Psi}_{C^2}$ and the curvature $\overline{R}$.

Note that
\[-G^{ij,kl}\eta_{ij1}\eta_{kl1}=-G^{pp,qq}\eta_{pp1}\eta_{qq1}+G^{pp,qq}\eta_{pq1}^2,\] where the first term
\[-G^{pp,qq}\eta_{pp1}\eta_{qq1} \geq -\frac{C}{G}\left(\sum G^{ii}\eta_{ii1}\right)^2 \geq -C\kappa_{1}^2\] by concavity of $G$ i.e. lemma \ref{concavity} and \eqref{curvature differentiate once}, and the second term
\[G^{pp,qq}\eta_{pq1}^2\geq 2\sum_{i\neq 1}G^{11,ii}\eta_{1i1}^2 =2\sum_{i\neq 1}\frac{G^{ii}-G^{11}}{\eta_{11}-\eta_{ii}}\eta_{11i}^2\geq 0.\]
Therefore,
\begin{align*}
\frac{F^{ii}h_{ii11}}{\kappa_1}=\frac{G^{ii}\eta_{ii11}}{\kappa_1} &= \frac{-G^{ij,kl}\eta_{ij1}\eta_{kl1}}{\kappa_1}+\frac{\Psi_{11}}{\kappa_1}\\
&\geq \sum_{i} \frac{h_{11i}}{\kappa_1}(d_{\nu}\Psi)(E_i)-C\kappa_{1}-C.
\end{align*} and \eqref{2nd order critical 1} becomes
\begin{gather} \label{2nd order critical 2}
\begin{split}
0&\geq \sum_{i} \frac{h_{11i}}{\kappa_1}(d_{\nu}\Psi)(E_i)-\frac{F^{ii} \tau_{ii}}{\tau-a}-\sum F^{ii} \kappa_{i}^2 - C \sum F^{ii} -C\kappa_{1}-C.
\end{split}
\end{gather}

Recall from lemma \ref{geometric formulas} that
\begin{align*}
\tau_{ii}&=\sum_{k}(h_{iik}-\overline{R}_{0iik})\Phi_k+\phi'\kappa_i-\tau\kappa_{i}^2.\\
\end{align*} Now, substituting this into \eqref{2nd order critical 2}, we have
\begin{gather} \label{2nd order critical 3}
\begin{split}
0 \geq &\sum_{i} \frac{h_{11i}}{\kappa_1}(d_{\nu}\overline{\Psi})(E_i)-\sum F^{ii} \kappa_{i}^2 - C \sum F^{ii} -C\kappa_{1}-C\\
& -\frac{1}{\tau-a} \sum F^{ii} \left[\sum_k (h_{iik}-\overline{R}_{0iik})\Phi_k+\phi'\kappa_i-\tau\kappa_{i}^2\right] \\
\geq &\ \frac{a}{\tau-a}\sum F^{ii} \kappa_{i}^2 +\left[\sum_{k} \frac{h_{11k}}{\kappa_1}(d_{\nu}\Psi)(E_k)-\frac{F_k\Phi_k}{\tau-a}\right]-C\sum F^{ii} - C\kappa_1 - C\\
\end{split}
\end{gather}
By \eqref{1st order critical} , \eqref{curvature differentiate once} and lemma \ref{geometric formulas}, we have
\begin{align*}
&\sum_{k} \frac{h_{11k}}{\kappa_1}(d_{\nu}\Psi)(E_k)-\frac{F_k\Phi_k}{\tau-a} \\
=&\sum_{k} \frac{\tau_k}{\tau - a} (d_{\nu} \Psi)(E_k)-\left[h_{kk}(d_\nu\Psi)(E_k)+(d_V\Psi)(X_k)\right]\frac{\Phi_k}{\tau-a} \\
\geq & \sum_{k} \frac{h_{kk}\Phi_k}{\tau - a}(d_\nu \Psi)(E_k) - h_{kk}(d_\nu \Psi)(E_k)\frac{\Phi_k}{\tau-a}-C \\
= & - C
\end{align*}

Thus, \eqref{2nd order critical 3} implies that
\begin{gather} \label{2nd order critical 4}
\begin{split}
0 \geq &\ \frac{a}{\tau-a}\sum F^{ii} \kappa_{i}^2 -C\sum F^{ii} -C\kappa_1 -C.
\end{split}
\end{gather} The rest is apparent due to the key lemma. However, for the convenience of deriving subsequent second order estimates, we shall establish the following.

\begin{lemma} \label{key control}
Let $0 \leq l<k<n$.
For our operator $F$, we have that
\[\frac{a}{\tau-a}\sum F^{ii}\kappa_{i}^2 -C\sum F^{ii} - C\kappa_1 \geq\left( Ca \sum F^{ii}\right) \kappa_{1}^2\] by assuming $\kappa_1$ is sufficiently large.
\end{lemma}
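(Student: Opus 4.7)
The plan is to apply the Key Lemma (Lemma \ref{key lemma}) in a direct way. Because $F^{11}$ is the smallest of the $F^{ii}$ and is already bounded below by a definite fraction of $\sum F^{ii}$, the ordering $\kappa_1 \geq \kappa_i$ will immediately convert the good term $\sum F^{ii}\kappa_i^2$ into the desired quadratic-in-$\kappa_1$ quantity. The two perturbations $-C\sum F^{ii}$ and $-C\kappa_1$ should then be absorbable by a fixed fraction of that good term once $\kappa_1$ is sufficiently large.

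More concretely, I would first invoke Lemma \ref{key lemma} to extract a constant $c_0 = c_0(n,k,l,\inf\Psi) > 0$ such that $F^{11} \geq c_0 \sum_i F^{ii}$, together with a positive lower bound $\sum_i F^{ii} \geq c_1 > 0$. Using $\kappa_1 \geq \kappa_i$, this immediately gives
\[
\frac{a}{\tau-a}\sum_i F^{ii}\kappa_i^2 \;\geq\; \frac{a}{\tau-a}\, F^{11}\kappa_1^2 \;\geq\; \frac{a c_0}{\tau-a}\,\kappa_1^2 \sum_i F^{ii}.
\]
I would then split this lower bound into three equal pieces. The first piece dominates $C\sum F^{ii}$ as soon as $\kappa_1^2 \geq 3C(\tau-a)/(ac_0)$. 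The second piece dominates $C\kappa_1$ once $\kappa_1 \sum_i F^{ii} \geq 3C(\tau-a)/(ac_0)$, which by the trace lower bound $\sum_i F^{ii} \geq c_1$ is implied by $\kappa_1 \geq 3C(\tau-a)/(ac_0 c_1)$. The remaining third is a positive multiple of $a \sum_i F^{ii}\,\kappa_1^2$, yielding the claimed inequality with a new constant $C$ on the right.

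The main subtlety, and the reason Lemma \ref{key lemma} is needed in two ways, is the absorption of $-C\kappa_1$. That term is only linear in $\kappa_1$ whereas the good term is quadratic, but one cannot simply peel the factor $\sum_i F^{ii}$ off without knowing this trace is bounded below by a positive constant. The uniform lower bound $\sum_i F^{ii} \geq c_1 > 0$ furnished by Lemma \ref{key lemma} is precisely what makes the elementary absorption step work, and it is the only place where one genuinely invokes the full strength of Lemma \ref{key lemma} rather than only the comparison $F^{11} \geq c_0 \sum F^{ii}$.
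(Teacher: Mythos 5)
Your proposal is correct and follows essentially the same route as the paper: both arguments reduce $\sum F^{ii}\kappa_i^2$ to the single term $F^{11}\kappa_1^2$, invoke Lemma \ref{key lemma} for $F^{11}\geq c_0\sum F^{ii}$ together with the positive lower bound on $\sum F^{ii}$ to absorb the $-C\kappa_1$ term, and then take $\kappa_1$ large. Your explicit three-way splitting is just a slightly more detailed bookkeeping of the same absorption the paper performs in one line.
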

\begin{proof}[Proof of Lemma \ref{key control}]
Indeed, by lemma \ref{key lemma}, we have that
\begin{align*}
&\ \frac{a}{\tau-a}\sum F^{ii}\kappa_{i}^2-C\sum F^{ii}- C\kappa_1 \\
\geq &\ \frac{a}{\tau-a} F^{11}\kappa_{1}^2-C\sum F^{ii}-C\kappa_1 \\
\geq &\ \frac{a}{\tau-a}\left(c_0\sum F^{ii}\right)\kappa_{1}^2-C\sum F^{ii}-\left(C\sum F^{ii}\right)\kappa_1\\
\geq &\ C\sum F^{ii} \left(Ca\kappa_{1}^2-C\kappa_1-C\right)\\
\geq &\ \left(Ca\sum F^{ii}\right)\kappa_{1}^2
\end{align*} by assuming $\kappa_1$ sufficiently large.
\end{proof}
With this lemma, \eqref{2nd order critical 4} implies
\[0 \geq \left(C a\sum F^{ii}\right) \kappa_{1}^2 - C\sum F^{ii}\]
and the desired estimate follows by dividing $\sum F^{ii}$.

\subsection*{When $k=n,l=n-1$ and $\Psi=\Psi(X)$}
In this case, the twice differentiation $\Psi_{11}$ would only give rise a negative constant $-C$ instead of a negative curvature term $-C\kappa_1$; and for the particular curvature quotient operator, we have $\sum F^{ii} \leq C$. Also, there would be an extra positive curvature term 
\[\kappa_{1}^2\sum_{i=1}^{n} F^{ii}\kappa_i \geq C(n,\inf F) \kappa_{1}^2\]
from the interchanging formula \eqref{interchanging}, which were neglected in the above analysis but could be utilized here. The second order equation would thus become
\[0 \geq C\kappa_1 - C\sum F^{ii} - C \geq C\kappa_1 - C\] and the estimate follows.
\end{proof}

\section{The Pogorelov Type Estimate} \label{Pogorelov proof}

In this section, we obtain the Pogorelov type interior $C^2$ estimate for equation \eqref{equation 1} when $0 \leq l<k<n$, hence proving theorem \ref{Pogorelov estimate}. As reviewed in the Introduction, the $k-l \geq 2$ case has already been solved in \cite{Chu-Jiao, Chen-Tu-Xiang-JDE-2021}; here the new contribution is the following theorem.

\begin{theorem}
Let $1 \leq k < n$.
Suppose $u \in C^4(\Omega) \cap C^2(\overline{\Omega})$ is a $(\eta,k)$-convex solution to the following Dirichlet problem
\begin{alignat*}{2}
\frac{\sigma_k}{\sigma_{k-1}}(\eta[u])&=f(x,u, Du) &\quad &\text{in $\Omega$,} \\
u&=0 &\quad &\text{on $\partial \Omega$.}
\end{alignat*} Then for every $\beta>0$, there exists some $C>0$ depending on $n, k, \norm{u}_{C^1}, \norm{f}_{C^2}$ and $\inf f$ such that
\[\sup_{\Omega}\ (-u)^{\beta} |D^2u| \leq C.\]
\end{theorem}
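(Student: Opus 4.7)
The plan is to follow the standard Pogorelov framework of Chu--Jiao \cite{Chu-Jiao} and Chen--Tu--Xiang \cite{Chen-Tu-Xiang-JDE-2021}, but to replace the usual coercivity step (which forced $\beta$ to exceed some threshold) by a direct application of the key lemma, Lemma \ref{key lemma}. Since the case $k=1$ reduces to $(n-1)\Delta u=f$, a linear equation for which the estimate is immediate, I assume throughout that $k\ge 2$. Consider the auxiliary function
\[W(x) = \beta \log(-u(x)) + \log \mu_1(D^2 u(x)) + \tfrac{A}{2}|Du(x)|^2,\]
where $\mu_1$ is the largest eigenvalue of $D^2u$ and $A>0$ is a small constant chosen so that $A\|Du\|_{C^0}^2\le \tfrac14$. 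Since $u$ vanishes on $\partial\Omega$, $W$ attains its maximum at some interior point $x_0\in\Omega$, and we may assume $\mu_1(x_0)$ is arbitrarily large. Diagonalize $D^2u(x_0)$ with $u_{11}\ge\cdots\ge u_{nn}$ and invoke the Brendle--Choi--Daskalopoulos smooth approximation to handle the multiplicity of $\mu_1$, exactly as in the proof of Theorem \ref{curvature estimate}; the first-order critical equation reads
\[\frac{u_{11i}}{\mu_1} \;=\; -\frac{\beta u_i}{u} - A u_i u_{ii}. \qquad(\ast)\]

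I then contract $W_{ii}(x_0)\le 0$ with $F^{ii}$ and expand each piece using: Euler's identity $F^{ii}u_{ii}=\sum G^i\eta_i=(k-l)f$; flat-space commutation $u_{11ii}=u_{ii11}$ together with the twice-differentiated equation $F^{ii}u_{ii11}=f_{11}(x,u,Du)-G^{pq,rs}\eta_{pq1}\eta_{rs1}$; Lemma \ref{concavity}, which for $k-l=1$ automatically produces the non-negative contribution $-G^{pp,qq}\eta_{pp1}\eta_{qq1}\ge 0$ (and for $k-l\ge 2$ contributes at worst $-C\mu_1^2$ via $|f_1|\le C\mu_1$); and the smooth-approximation inequality $(\mu_1)_{ii}\ge u_{11ii}+2\sum_{p>1}u_{1pi}^2/(\mu_1-u_{pp})$. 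The $(\eta,k)$-convexity with $k\ge 2$ yields the ancillary bound $|u_{ii}|\le C(n)\Delta u\le C(n)\mu_1$ by the $\Gamma_2$-cone property, which is needed to control the mixed terms $2A\beta F^{ii}u_i^2 u_{ii}/u$ and $A f_{p_k}u_k u_{kk}$. After substituting $(\ast)$ into both $F^{ii}u_{11i}^2/\mu_1^2$ and the $f_{p_k}u_{k11}/\mu_1$ piece of $f_{11}/\mu_1$, the choice $A\|Du\|_{C^0}^2\le \tfrac14$ leaves the dominant positive term $AF^{ii}u_{ii}^2-A^2F^{ii}u_i^2u_{ii}^2\ge \tfrac{3A}{4}F^{ii}u_{ii}^2$ intact.

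The crucial step is then the key lemma: it provides
\[\tfrac{3A}{4}F^{ii}u_{ii}^2 \;\ge\; \tfrac{3A}{4}F^{11}\mu_1^2 \;\ge\; \tfrac{3Ac_0}{4}\mu_1^2\sum F^{ii},\]
for some $c_0=c_0(n,k,l)>0$; together with the accompanying uniform lower bound $\sum F^{ii}\ge c_1(\inf f)>0$ furnished by the same lemma, I divide through by $\sum F^{ii}$ and apply AM--GM to absorb the remaining bad terms---which scale like $\beta^2/|u|^2$, $A\beta\mu_1/|u|$, $\beta/|u|$, and $\mu_1$---into the dominant $\tfrac{3Ac_0}{4}\mu_1^2$. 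The resulting inequality has the shape $\mu_1(x_0)^2 \le C_{\beta,A}\bigl(1+|u(x_0)|^{-2}\bigr)$, which, combined with $W(x)\le W(x_0)$ and the boundedness of $|Du|$, yields the desired global estimate $\sup_\Omega(-u)^\beta|D^2u|\le C$ with $C=C(n,k,l,\|u\|_{C^1},\inf f,\|f\|_{C^2},\beta)$.

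The hard part will be the arbitrariness of $\beta>0$: in previous Pogorelov arguments the coercivity came from the $\beta F^{ii}u_{ii}/u$ and $(\beta+\beta^2)F^{ii}u_i^2/u^2$ terms generated by the $\log(-u)$ factor, which forced $\beta$ to exceed some threshold. Here, by contrast, all coercivity is furnished by $\tfrac{3Ac_0}{4}\mu_1^2\sum F^{ii}$ through Lemma \ref{key lemma}, so every $\beta$-dependent quantity sits on the bad side of the inequality and is absorbed at the cost of enlarging $C=C(\beta)$. The secondary obstacle, the $-C\mu_1^2$ concavity remainder arising when $k-l\ge 2$, is absorbed by the same dominant term provided $A$ is first chosen not smaller than a constant depending on $\|f\|_{C^2}$ and $\inf f$ and then shrunk (if necessary) to maintain $A\|Du\|_{C^0}^2\le \tfrac14$.
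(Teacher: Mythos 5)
Your proposal is correct and follows essentially the same route as the paper's proof in Section \ref{Pogorelov proof}: the identical test function $\beta\log(-u)+\log\kappa_{\max}+\tfrac{a}{2}|Du|^2$, the Brendle--Choi--Daskalopoulos approximation at the maximum point, the concavity inequality for the quotient to control $-G^{ij,rs}\eta_{ij1}\eta_{rs1}$, and---crucially---Lemma \ref{key lemma} to convert $F^{11}\kappa_1^2$ into $c_0\kappa_1^2\sum F^{ii}$, which is exactly the mechanism that frees $\beta>0$ from any lower threshold. The only inessential slip is your closing remark about first taking $A$ large to absorb the concavity remainder: after division by $\mu_1$ that remainder is merely linear in $\mu_1$, so it is absorbed by the dominant quadratic term for any fixed small $A$ and no conflicting requirement on $A$ arises.
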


\begin{remark}
Our proof works for all $0\leq l<k<n$ and the exponent $\beta>0$ could be arbitrary; previously in \cite{Chu-Jiao, Chen-Tu-Xiang-JDE-2021}, the number had to be possibly large.
\end{remark}

\begin{remark}
When $k=n$, the estimate is currently still unknown.
\end{remark}

\begin{proof} 
By the maximum principle, we have $u<0$ in $\Omega$. Let $\kappa_{\max}$ denote the maximum eigenvalue of the Hessian matrix $D^2u$, we consider the test function
\[Q(x)=\beta \log (-u)+\log \kappa_{\max} + \frac{a}{2}|Du|^2,\] where $a>0$ is a constant to be chosen later and it will turn out that $\beta>0$ can be arbitrary. Suppose that $Q$ attains its maximum at some point $x_0$. By choosing appropriate coordinates around $x_0$, we may assume that
\[\kappa_i=u_{ii}, \quad u_{ij}=\delta_{ij}u_{ii}, \quad \kappa_1 \geq \kappa_2 \geq \cdots \geq \kappa_n \quad \text{at $x_0$}.\] In case $\kappa_1$ has multiplicity $m>1$ i.e.
\[\kappa_1=\kappa_2 = \cdots = \kappa_m >\kappa_{m+1} \geq \cdots \geq \kappa_n,\] we may \footnote{Alternatively, one may also apply a standard perturbation argument; see e.g. \cite{Tu}.} apply a smooth approximation lemma \cite[Lemma 5]{BCD} of Brendle-Choi-Daskalopoulos to obtain that
\begin{align}
\delta_{\alpha\beta} \cdot (\kappa_{1})_{i}&=u_{\alpha\beta i}, \quad 1 \leq \alpha,\beta \leq m \\
(\kappa_1)_{ii}&\geq u_{11ii}+2\sum_{p>m}\frac{u_{1pi}^2}{\kappa_1-\kappa_p}
\end{align} in the viscosity sense.

Thus, at $x_0$ we have that
\begin{align}
0&=\beta \frac{u_i}{u} + \frac{u_{11i}}{u_{11}} + au_iu_{ii}, \label{Pogorelov 1st critical}\\
0&\geq \beta\frac{u_{ii}}{u}-\beta \frac{u_{i}^2}{u^2}+\frac{u_{11ii}}{u_{11}}-\frac{u_{11i}^2}{u_{11}^2}+a\left(u_{ii}^2+\sum_{k}u_{k}u_{kii}\right). \label{Pogorelov 2nd critical 1}
\end{align} Contracting \eqref{Pogorelov 2nd critical 1} with $F^{ii}$, we have that 
\begin{gather}\label{Pogorelov 2nd critical 2}
\begin{split}
0&\geq \frac{\beta f}{u}-\beta \sum F^{ii}\frac{u_{i}^2}{u^2}+\frac{1}{u_{11}}\sum F^{ii}u_{11ii}-\sum \frac{F^{ii}u_{11i}^2}{u_{11}^2}\\
&\quad +a\sum F^{ii}u_{ii}^2+a\sum_i \sum_k F^{ii}u_{k}u_{kii}.
\end{split}
\end{gather} 
Let us first estimate the fourth order term. By differentiating the equation
\[F(\kappa)=G(\lambda)=\frac{\sigma_{k}(\eta[u])}{\sigma_{l}(\eta[u])}=f(x,u,Du)\] twice, we have that
\begin{equation}
\sum F^{ii}u_{iik}=\sum G^{ii}\eta_{iik}=f_k=f_{x_k}+f_{u}u_{k}+f_{u_k}u_{kk} \label{Pogorelov differentiate once}
\end{equation} and 
\[G^{ij,rs}\eta_{ij1}\eta_{rs1}+G^{ii}\eta_{ii11}=f_{11} \geq -C(1+u_{11}+u_{11}^2)+f_{u_i}u_{11i}.\] As in section \ref{global curvature estimate proof}, by concavity of $G$ and \eqref{Pogorelov differentiate once}, we have
\[-G^{ij,rs}\eta_{ij1}\eta_{rs1} \geq -Cu_{11}^2.\] Hence,
\[F^{ii}u_{11ii}=G^{ii}\eta_{ii11}=f_{11}-G^{ij,rs}\eta_{ij1}\eta_{rs1}.\]
Our inequality \eqref{Pogorelov 2nd critical 2} then becomes
\begin{gather}\label{Pogorelov 2nd critical 3}
\begin{split}
0&\geq a\sum F^{ii}u_{ii}^2 +\frac{\beta f}{u} -\beta \sum F^{ii}\frac{u_{i}^2}{u^2} -\sum \frac{F^{ii}u_{11i}^2}{u_{11}^2}\\
&\quad +\left[a\sum_{i}\sum_{k} F^{ii}u_ku_{kii} + \sum_{i}\frac{\partial f}{\partial u_i}\frac{u_{11i}}{u_{11}}\right]-Cu_{11}-C.
\end{split}
\end{gather}
By the first order critical equation \eqref{Pogorelov 1st critical} and equality \eqref{Pogorelov differentiate once}, we have
\[a\sum_{i}\sum_{k} F^{ii}u_ku_{kii} + \sum_{i}\frac{\partial f}{\partial u_i}\frac{u_{11i}}{u_{11}} \geq -C\left(a+\frac{\beta}{(-u)}\right).\] For the third order terms, we again use the first order critical equation \eqref{Pogorelov 1st critical} to obtain that
\[\frac{u_{11i}^2}{u_{11}^2}=\left(\beta \frac{u_i}{u}+au_iu_{ii}\right)^2 \leq 2\beta^2\frac{u_{i}^2}{u^2}+2Ca^2u_{ii}^2\] for some $C>0$ depending on $\norm{u}_{C^1}$. Therefore, we have
\begin{gather}\label{Pogorelov 2nd critical 4}
\begin{split}
0&\geq \left(a-2Ca^2\right)\sum F^{ii}u_{ii}^2  +\frac{\beta f}{u} -\left(\beta+2\beta^2\right) \sum F^{ii}\frac{u_{i}^2}{u^2} \\
&\quad -Cu_{11}-C\left(a+\frac{\beta}{(-u)}\right)-C.
\end{split}
\end{gather}
Now, using
\[\sum F^{ii} \geq C(n,k,l,\inf f)\] and choosing $a>0$ small enough, we obtain that
\begin{gather}\label{Pogorelov 2nd critical 5}
\begin{split}
0&\geq \frac{a}{2}\sum F^{ii}u_{ii}^2 -Cu_{11}-C\left(1+a+\frac{\beta}{(-u)}+\frac{\beta+C\beta^2}{(-u)^2}\right)\sum F^{ii}.
\end{split}
\end{gather}

Finally, by proceeding exactly as in the proof of lemma \ref{key control}, we have
\[0 \geq \left(Ca\sum F^{ii}\right)u_{11}^2-\frac{C(a,\beta)}{(-u)^2}\sum F^{ii}\] and the desired estimate follows by dividing $\sum F^{ii}$.
\end{proof}

\section{The Purely Interior Estimate} \label{interior proof}
In this section, we derive purely interior $C^2$ estimates for equation \eqref{equation 2} when $0 \leq l<k<n$, and hence proving theorem \ref{interior C^2 estimates}. As reviewed in the Introduction, the $k-l \geq 2$ case has already been solved in \cite{Mei}. The new contribution here is the resolution for the $k-l=1$ (with $k<n$) case with a general right-hand side $f=f(x,u,Du)$ i.e. the following theorem.
\begin{theorem}
Let $1 \leq k < n$. Suppose $u \in C^4(\overline{B}_{1})$ is a $(\eta,k)$-convex solution of 
\begin{equation}
\frac{\sigma_k}{\sigma_{k-1}}(\eta[u])=f(x,u,Du). \label{interior equation}
\end{equation} Then there exists some $C>0$ depending on $n, k, \norm{u}_{C^1(B_1)}, \norm{f}_{C^2(B_1)}$ and $\inf_{B_1} f$ such that
\[\sup_{B_{1/2}}|D^2u| \leq C.\]
\end{theorem}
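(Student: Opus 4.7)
\medskip

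\noindent\textbf{Proof proposal.} The plan is to mimic the strategy of the Pogorelov estimate (Theorem \ref{Pogorelov estimate}) but with the barrier $(-u)$ replaced by an interior cutoff and with the gradient term retained so as to absorb the bad first-order contributions coming from $f=f(x,u,Du)$. Concretely, set $\eta(x)=1-|x|^2$ and consider the test function
\[
Q(x)=\beta\log\eta(x)+\log\kappa_{\max}(x)+\tfrac{a}{2}|Du(x)|^{2},
\]
where $a>0$ is a small parameter and $\beta>0$ will be chosen large (but fixed). Since $Q\to-\infty$ as $x\to\partial B_{1}$, the maximum is attained at some interior point $x_{0}$; at $x_{0}$ I would diagonalize the Hessian so that $u_{ij}=\kappa_{i}\delta_{ij}$ with $\kappa_{1}\ge\kappa_{2}\ge\cdots\ge\kappa_{n}$, and if $\kappa_{1}$ is a multiple eigenvalue invoke the Brendle--Choi--Daskalopoulos smoothing lemma exactly as in the proofs of Theorems \ref{curvature estimate} and \ref{Pogorelov estimate}.

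Writing the first-order critical equation
\[
0=\beta\frac{\eta_{i}}{\eta}+\frac{u_{11i}}{u_{11}}+a\,u_{k}u_{ki},
\]
and contracting the second-order inequality with $F^{ii}$, I would then substitute $F^{ii}u_{11ii}=G^{ii}\eta_{ii11}=f_{11}-G^{ij,rs}\eta_{ij1}\eta_{rs1}$, estimate the second derivative of $f$ by $-C(1+\kappa_{1}+\kappa_{1}^{2})+f_{u_i}u_{11i}$, and use the concavity Lemma \ref{concavity} together with the first differentiated equation $F^{ii}u_{iik}=f_{k}$ to obtain
\[
-G^{ij,rs}\eta_{ij1}\eta_{rs1}\ge -C\kappa_{1}^{2}.
\]
Next, I would use the critical equation to replace the third-order terms $u_{11i}/u_{11}$ appearing both from $f_{u_i}u_{11i}/u_{11}$ and from the concentration term $aF^{ii}u_ku_{kii}$, giving rise only to controllable quantities of the form $\beta\eta_{i}/\eta$ and $au_iu_{ii}$. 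Squaring this identity also bounds $F^{ii}u_{11i}^{2}/u_{11}^{2}$ by $2\beta^{2}F^{ii}\eta_{i}^{2}/\eta^{2}+2Ca^{2}F^{ii}u_{ii}^{2}$.

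Putting everything together, the surviving inequality should take the schematic form
\[
0\ge\bigl(a-2Ca^{2}\bigr)\sum F^{ii}u_{ii}^{2}-C\kappa_{1}-C\Bigl(1+a+\frac{\beta}{\eta}+\frac{\beta+\beta^{2}}{\eta^{2}}\Bigr)\sum F^{ii}.
\]
Choosing $a$ sufficiently small so that the coefficient of $\sum F^{ii}u_{ii}^{2}$ is positive, using $\sum F^{ii}\ge c(n,k,l,\inf f)>0$, and, crucially, invoking the Key Lemma \ref{key lemma} to bound $F^{11}\ge c\sum F^{ii}$ and hence $\sum F^{ii}u_{ii}^{2}\ge F^{11}\kappa_{1}^{2}\ge c(\sum F^{ii})\kappa_{1}^{2}$, I would run an argument parallel to Lemma \ref{key control} to conclude that at $x_{0}$,
\[
\eta(x_{0})^{2}\kappa_{1}(x_{0})^{2}\le C(\beta,a,n,k,\|u\|_{C^1},\|f\|_{C^2},\inf f).
\]
Since $\eta\equiv\tfrac{3}{4}$ on $B_{1/2}$, evaluating the max of $Q$ at a point of $B_{1/2}$ with the maximum of $u_{11}$ there would yield the desired bound $\sup_{B_{1/2}}|D^{2}u|\le C$.

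The main obstacle I anticipate is the careful bookkeeping of the ``cutoff'' contributions $\beta\eta_{i}/\eta$ and $\beta\eta_{ii}/\eta$: because $\eta$ only vanishes like $\operatorname{dist}(x,\partial B_{1})$ (rather than quadratically as would $-u$ in the Pogorelov setting), the terms $\beta^{2}F^{ii}\eta_{i}^{2}/\eta^{2}$ generated by the squared first-order critical relation are the most delicate, and absorbing them requires exploiting the full strength of Lemma \ref{key lemma} so that $F^{11}u_{11}^{2}$ dominates $\beta^{2}/\eta^{2}\cdot\sum F^{ii}$ once $\eta\kappa_{1}$ is large. The secondary (and standard) difficulty is handling the $f_{u_{k}}u_{11k}$ cross term, which is the reason the gradient factor $\tfrac{a}{2}|Du|^{2}$ must be included in $Q$ and the parameter $a$ must be tuned against the constant from $\|f\|_{C^{2}}$.
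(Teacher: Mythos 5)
Your proposal matches the paper's proof essentially verbatim: the paper uses the test function $Q=2\log\rho+\log\kappa_{\max}+\tfrac{a}{2}|Du|^2$ with the cutoff $\rho=1-|x|^2$ (your $\eta$ with $\beta=2$), applies the Brendle--Choi--Daskalopoulos smoothing lemma, the same concavity bound for $-G^{ij,rs}\eta_{ij1}\eta_{rs1}$, the same treatment of the $f_{u_i}u_{11i}$ cross term via the first-order critical equation, and the same absorption through Lemma~\ref{key lemma} (as in Lemma~\ref{key control}) to reach $\rho(x_0)\kappa_1(x_0)\le C$. Your only deviation is proposing ``$\beta$ large,'' which is unnecessary: any fixed $\beta\ge 1$ (the paper takes $\beta=2$) works, since $\rho\kappa_1\le C$ and $\rho\le 1$ already control $\rho^\beta\kappa_1$.
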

\begin{remark}
Our proof works for all $0 \leq l<k<n$.
\end{remark}
\begin{remark}
The estimate for the $(n,n-1)$ and $(n,n-2)$ quotients remains still unknown.
\end{remark}
\begin{proof}
For $x \in B_1$ and $\xi \in \bS^{n-1}$, we consider the standard test function
\[\tilde{Q}(x,\xi)=2 \log \rho(x) + \log \max\{ u_{\xi\xi}, N\} + \frac{a}{2}|Du|^2\] where $\rho(x)=1-|x|^2$ and $a>0$ is a constant to be determined later.

The maximum value of $\tilde{Q}$ in $\overline{B}_1 \times \bS^{n-1}$ will be attained at some interior point $x_0 \in B_1$, since $\rho=0$ on the boundary. Suppose also that the maximum direction is $\xi(x_0)=e_1$. Then we would have $u_{1i}(x_0)=0$ for $i \geq 2$. Indeed, let $\xi(t)=\frac{(1,t,0,\ldots,0)}{\sqrt{1+t^2}}$, we have
\[0=\frac{d}{dt}\tilde{Q}(x_0,\xi(t))\bigg|_{t=0}=2u_{12}(x_0).\] Choosing appropriate coordinates such that $D^2u(x_0)$ is diagonal and 
\[u_{11} \geq u_{22} \geq \cdots \geq u_{nn}.\] We shall also assume $u_{11} \geq N$ where $N>0$ is some possibly large number.

Now, the function
\[Q(x)=2 \log \rho(x) + \log u_{11} + \frac{a}{2}|Du|^2\] will also attain its maximum at $x_0$. Thus at this point, we have
\begin{align}
0&=2\frac{\rho_i}{\rho}+\frac{u_{11i}}{u_{11}}+au_iu_{ii}, \label{interior 1st critical} \\
0&\geq 2\frac{\rho_{ii}}{\rho}-2\frac{\rho_{i}^2}{\rho^2}+\frac{u_{11ii}}{u_{11}}-\frac{u_{11i}^2}{u_{11}^2}+a\left(u_{ii}^2+\sum_k u_k u_{kii}\right). \label{interior 2nd critical}
\end{align} Contracting \eqref{interior 2nd critical} with $F^{ii}$, we have 
\begin{gather} \label{interior 2nd critical 1}
\begin{split}
0&\geq -4\frac{\sum F^{ii}}{\rho}-8\frac{\sum F^{ii}x_{i}^2}{\rho^2} + \sum \frac{F^{ii}u_{11ii}}{u_{11}}-\sum \frac{F^{ii}u_{11i}^2}{u_{11}^2} \\
&\quad +a\sum F^{ii}u_{ii}^2 + a \sum_i\sum_k F^{ii}u_{iik}u_k+A\sum F^{ii}.
\end{split}
\end{gather} For the fourth order term, we differentiate equation \eqref{equation 2} twice to obtain that
\begin{equation}
F^{ii}u_{iik}=G^{ii}\eta_{iik}=f_k=f_{x_k}+f_uu_k+f_{u_k}u_{kk} \label{interior differentiate once}
\end{equation} and 
\[G^{ij,rs}\eta_{ij1}\eta_{rs1}+G^{ii}\eta_{ii11}=f_{11} \geq -C(1+u_{11}+u_{11}^2)+\sum \frac{\partial f}{\partial u_i}u_{11i}.\] As in section \ref{global curvature estimate proof}, by concavity of $G$ and \eqref{interior differentiate once}, we have 
\[-G^{ij,rs}\eta_{ij1}\eta_{rs1} \geq -Cu_{11}^2.\] Hence,
\[F^{ii}u_{11ii}=G^{ii}\eta_{ii11}=-G^{ij,rs}\eta_{ij1}\eta_{rs1}+f_{11}\] and \eqref{interior 2nd critical 1} becomes
\begin{gather} \label{interior 2nd critical 2}
\begin{split}
0&\geq  a \sum F^{ii}u_{ii}^2  -\sum \frac{F^{ii}u_{11i}^2}{u_{11}^2} -C\left(\frac{1}{\rho}+\frac{1}{\rho^2}\right)\sum F^{ii}\\
&\quad  + \left[a \sum_i\sum_k F^{ii}u_{iik}u_k +\sum \frac{\partial f}{\partial u_i}\frac{u_{11i}}{u_{11}}\right]- Cu_{11}-C.
\end{split}
\end{gather} By the first order critical equation \eqref{interior 1st critical} and \eqref{interior differentiate once}, we can estimate
\begin{align*}
&\ a \sum_i\sum_k F^{ii}u_{iik}u_k +\sum \frac{\partial f}{\partial u_i}\frac{u_{11i}}{u_{11}} \\
=&\ a \sum_{k} f_ku_k - \sum_{k} \frac{\partial f}{\partial u_k}\left(2\frac{\rho_k}{\rho}+au_ku_{kk}\right)\\
=&\ a \sum_{k} \left(f_{x_k}+f_{u}u_k+\frac{\partial f}{\partial u_k}u_{kk}\right) u_k - \sum_{k} \frac{\partial f}{\partial u_k}\left(2\frac{\rho_k}{\rho}+au_ku_{kk}\right)\\
=&\ a\sum_{k} (f_{x_k}+f_{u}u_{k})u_{k}-\sum_{k}\frac{\partial f}{\partial u_k}\left(-4\frac{x_k}{\rho}\right)\\
\geq &\ -C\left(a+\frac{1}{\rho}\right).
\end{align*} So we have
\begin{gather} \label{interior 2nd critical 3}
\begin{split}
0&\geq  a \sum F^{ii}u_{ii}^2  -\sum \frac{F^{ii}u_{11i}^2}{u_{11}^2} -C\left(\frac{1}{\rho}+\frac{1}{\rho^2}\right)\sum F^{ii}\\
&\quad  -C\left(a+\frac{1}{\rho}\right)- Cu_{11}-C.
\end{split}
\end{gather}
Again, by the first order critical equation \eqref{interior 1st critical}, we have
\[\frac{u_{11i}^2}{u_{11}^2}=\left(-\frac{4x_i}{\rho}+au_iu_{ii}\right)^2 \leq \frac{C}{\rho^2}+Ca^2u_{ii}^2\] for some $C>0$ depending on $\norm{u}_{C^1}$. Therefore, we have
\begin{gather} \label{interior 2nd critical 4}
\begin{split}
0&\geq  \left(a-Ca^2\right) \sum F^{ii}u_{ii}^2  -C\left(\frac{1}{\rho}+\frac{1}{\rho^2}\right)\sum F^{ii}\\
&\quad  -C\left(a+\frac{1}{\rho}\right)- Cu_{11}-C.
\end{split}
\end{gather} Finally, by using
\[\sum F^{ii} \geq C(n,k,l,\inf f)\] and choosing $a>0$ sufficiently small, we have
\begin{gather} \label{interior 2nd critical 5}
\begin{split}
0&\geq  \frac{a}{2} \sum F^{ii}u_{ii}^2 -C(1+a+u_{11})\sum F^{ii}- C\left(\frac{1}{\rho}+\frac{1}{\rho^2}\right)\sum F^{ii}.
\end{split}
\end{gather} By proceeding as in the proof of lemma \ref{key control}, we would have
\[0 \geq \left(Ca\sum F^{ii}\right)u_{11}^2-C\left(\frac{1}{\rho}+\frac{1}{\rho^2}\right)\sum F^{ii}\] and the desired estimate follows by dividing $\sum F^{ii}$.

\end{proof}

\appendix

\section{A note on interior curvature estimates}
In this appendix, we prove an interior curvature estimate for the following curvature quotient equation.
\begin{theorem}\label{interior curvature estimates}
Let $1 \leq k \leq n$.
Suppose $\Sigma=(x,u(x))$ is a $k$-convex graph over $B_r \subseteq \bR^n$ and it is a solution of the following curvature quotient equation
\[\frac{\sigma_k}{\sigma_{k-1}}(\kappa_1(x),\ldots,\kappa_n(x))=f(X)>0, \quad X \in B_r \times \bR.\] Then there exists some $C>0$ depending on $n,k,r,\norm{\Sigma}_{C^1(B_r)}, \inf_{B_r} f$ and $\norm{f}_{C^2(B_r)}$ such that 
\[\sup_{x \in B_{r/2}}|\kappa_{\max}(x)| \leq C.\]
\end{theorem}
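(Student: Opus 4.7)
The plan is to run a standard maximum principle argument with a localized test function
\[Q(x) = 2\beta\log\rho(x) + \log\kappa_{\max}(x) + \frac{a}{2}|Du(x)|^2,\]
where $\rho(x) = r^2 - |x|^2$ is the localizing cutoff and $\beta, a > 0$ are small constants to be chosen at the end. Since $\log\rho \to -\infty$ on $\partial B_r$, the supremum is attained at some interior point $x_0 \in B_r$. At $x_0$, I would choose an orthonormal frame $\{e_1,\dots,e_n\}$ on $\Sigma$ diagonalizing the Weingarten tensor with $\kappa_1 = \kappa_{\max}$, invoking the Brendle-Choi-Daskalopoulos smooth-approximation lemma to handle any multiplicity of $\kappa_1$, exactly as in Sections \ref{Pogorelov proof}-\ref{interior proof}.

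The next step is the standard second-order computation. The first-order critical equation solves $(\kappa_1)_i/\kappa_1$ in terms of $-2\beta\rho_i/\rho - a u_k u_{ki}$. Contracting the second-order critical inequality with $F^{ii} = \partial F/\partial h_{ii}$ and substituting the Codazzi-Simons interchanging identity in Euclidean space,
\[h_{ii,11} = h_{11,ii} + \kappa_1\kappa_i^2 - \kappa_1^2\kappa_i,\]
produces a fourth-order term $F^{ii}h_{11,ii}$ that we eliminate by differentiating $F(\kappa) = f(X)$ twice in the $e_1$ direction; the second-variation term $-F^{pq,rs}h_{pq,1}h_{rs,1}$ is bounded below by $-C\kappa_1^2$ via the concavity of $\sigma_k/\sigma_{k-1}$ on $\Gamma_k$ (Lemma \ref{concavity}). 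The third-order squared-term $-F^{ii}(\kappa_1)_i^2/\kappa_1^2$ and the mixed term $a\sum F^{ii}u_k u_{kii}$ are processed exactly as in Section \ref{interior proof} using the first-order critical equation and \eqref{interior differentiate once}, yielding, after collecting,
\[0 \;\geq\; \frac{a}{2}\sum F^{ii}\kappa_i^2 \;-\; C\Bigl(\tfrac{1}{\rho}+\tfrac{1}{\rho^2}\Bigr)\sum F^{ii} \;-\; C\kappa_1 \;-\; C\Bigl(a+\tfrac{1}{\rho}\Bigr).\]

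The argument is closed by two structural facts for the curvature quotient $\sigma_k/\sigma_{k-1}$ on $\Gamma_k$: the Newton-Maclaurin lower bound $\sum F^{ii} \geq c(n,k,\inf f)>0$ and the adjacent-index analog $F^{11} \geq c(n,k)\sum F^{ii}$ in the ordering $\kappa_1\geq\cdots\geq\kappa_n$. Together, these allow (as in Lemma \ref{key control}) the good term $aF^{11}\kappa_1^2 \geq ca\kappa_1^2\sum F^{ii}$ to absorb the harmful boundary terms $C\rho^{-2}\sum F^{ii}$ and $C\kappa_1$ once $\rho^{2\beta}\kappa_1$ is large enough. This gives $\rho(x_0)^{2\beta}\kappa_{\max}(x_0) \leq C$; since $\rho \geq 3r^2/4$ on $B_{r/2}$, the desired interior estimate follows. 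The main obstacle is, as throughout the paper, the critical adjacent-index case $k-l=1$: absorbing the cutoff-induced term $C\rho^{-2}\sum F^{ii}$ into $aF^{11}\kappa_1^2$ is only possible thanks to the pinching estimate $F^{11} \geq c\sum F^{ii}$, which plays the role of the Chen-Dong-Han key lemma for the $\kappa$-input operator, and whose verification for $\sigma_k/\sigma_{k-1}$ on $\Gamma_k$ in the ordering $\kappa_1\geq\cdots\geq\kappa_n$ is the technical heart of the argument.
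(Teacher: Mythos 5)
The pinching estimate you invoke, $F^{11} \geq c(n,k)\sum F^{ii}$ in the ordering $\kappa_1 \geq \cdots \geq \kappa_n$ for the $\kappa$-input quotient $F = \sigma_k/\sigma_{k-1}$, is false. Take $k = 2$ and let $\kappa_1 \to \infty$ with $\kappa_2, \ldots, \kappa_n$ fixed in the interior of $\Gamma_2$: a short computation gives $F^{11} = \left(\sigma_1(\kappa|1)^2 - \sigma_2(\kappa|1)\right)/\sigma_1(\kappa)^2 \to 0$, while $F^{ii} \to 1$ for each $i > 1$, so $\sum F^{ii} \to n-1$. The Chen-Dong-Han lower bound is a feature of the $\eta$-transformed operator, where the substitution $\lambda_i = \sigma_1(\kappa) - \kappa_i$ reverses the eigenvalue ordering so that the derivative at the largest $\kappa_i$ corresponds to the smallest $\lambda_i$; it does not transfer to the raw $\kappa$-input quotient of theorem \ref{interior curvature estimates}. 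So the absorption step you describe at the end cannot be carried out.

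You also dropped the decisive favorable term. Contracting the commutator identity $h_{11,ii} = h_{ii,11} - \kappa_i^2\kappa_1 + \kappa_i\kappa_1^2$ with $F^{ii}$ and dividing by $\kappa_1$ produces $\kappa_1\sum F^{ii}\kappa_i = \kappa_1 F = \kappa_1 f \geq (\inf f)\,\kappa_1$, a large positive quantity; your displayed inequality records a harmful $-C\kappa_1$ in its place. The paper's proof closes the argument precisely by combining this $F\kappa_1$ term with the universal upper bound $\sum F^{ii} \leq n - k + 1$ for $\sigma_k/\sigma_{k-1}$, which controls the cutoff term $C\rho^{-2}\sum F^{ii}$ with no pinching needed; it also uses the support-function term $-\log(\nu^{n+1}-a)$ rather than $\frac{a}{2}|Du|^2$, which is the natural auxiliary quantity for the hypersurface problem. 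As remark \ref{key observation} stresses, the extra positive $F\kappa_1$ from the Gauss-Codazzi commutator is exactly what makes the curvature case work where the corresponding Hessian quotient interior estimate is open; your proposal misses this mechanism and substitutes a false lemma for it.
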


\begin{remark}
This result has long been known by experts in the field, at least since the work of Sheng-Urbas-Wang \cite{Sheng-Urbas-Wang}; this is why we are stating the result in an appendix rather than a formal section. In fact, we were told about the proof by Professor Siyuan Lu and Professor Guohuan Qiu in private communications. Moreover, in Lu's recent work \cite{Lu-2}, it is pointed out that the result was obtained by Professor Pengfei Guan and Professor Xiangwen Zhang in their unpublished note. 

The reasons for still including a proof of theorem \ref{interior curvature estimates} are (1) the proof of theorem \ref{interior curvature estimates 2} would be exactly the same (which is our main concern in this note); (2) there is no written proof for the result in the literature (not that we know of) and we thought we might as well provide one while getting theorem \ref{interior curvature estimates 2}.

\end{remark}
\begin{proof}
For simplicity of notation, we work on the case $r=1$; the arguments can be readily carried over for general $r>0$. The standard basis of $\bR^{n+1}$ will be denoted by $\{E_1,\ldots,E_{n+1}\}$ and the usual Euclidean inner product is denoted by $\langle \cdot , \cdot \rangle$. Let $X(x)$ be the position vector of $\Sigma$ and the outer unit normal is given by
\[\nu=\frac{(-Du,1)}{\sqrt{1+|Du|^2}};\] where we are going to make use of 
\[\nu^{n+1}=\langle \nu , E_{n+1}\rangle = \frac{1}{\sqrt{1+|Du|^2}}.\] For any unit tangential vector $\vartheta$ on $\Sigma$, we consider the auxiliary function in $B_1$:
\[Q(X(x),\vartheta)=\log \rho(X) + \log h_{\vartheta\vartheta}-\log (\nu^{n+1}-a),\] where $\rho(X)=1-|X|_{\bR^{n+1}}^{2}+\langle X, E_{n+1}\rangle^2=1-|x|_{\bR^{n}}^2$ and $0<a<\frac{1}{2}\inf \nu^{n+1}$ is some number depending on $\sup |Du|$. Suppose $Q$ attains its maximum at some interior $x_0 \in B_1$ in the direction of $\vartheta(x_0)$, which we may take to be $e_1$. Forming a local orthonormal frame $\{e_1,\ldots,e_n\}$ by rotating $\{e_2,\ldots,e_n\}$, we may assume that $h_{ij}(x_0)=\kappa_i\delta_{ij}$ and $\kappa_1 \geq \cdots \geq \kappa_n$. Denote the covariant differentiation in the direction $e_i$ by the symbol $\nabla_i$, we have at $x_0$, that
\begin{align}
0&=\frac{\rho_i}{\rho}+\frac{h_{11i}}{h_{11}}-\frac{\nabla_i\nu^{n+1}}{\nu^{n+1}-a}, \label{1st critical}\\
0&\geq \frac{\rho_{ii}}{\rho}-\frac{\rho_{i}^2}{\rho^2}+\frac{h_{11ii}}{h_{11}}-\frac{h_{11i}^2}{h_{11}^2}-\frac{\nabla_{ii}\nu^{n+1}}{\nu^{n+1}-a}+\left(\frac{\nabla_{i}\nu^{n+1}}{\nu^{n+1}-a}\right)^2. \label{2nd critical 1}
\end{align}

By the commutator formula i.e.
\begin{equation}
h_{11ii}=h_{ii11}-h_{ii}^2h_{11}+h_{ii}h_{11}^2,\label{commutator formula}
\end{equation} and contracting \eqref{2nd critical 1} with $F=\sigma_k/\sigma_{k-1}$, we have that
\begin{gather} \label{2nd critical 2}
\begin{split}
0&\geq \sum \frac{F^{ii}\rho_{ii}}{\rho}-\sum \frac{F^{ii}\rho_{i}^2}{\rho^2}+\sum \frac{F^{ii}h_{ii11}}{\kappa_1} - \sum F^{ii}\kappa_{i}^2 + F \kappa_1\\
&\quad -\sum F^{ii}\frac{h_{11i}^2}{\kappa_{1}^2} - \sum \frac{F^{ii}\nabla_{ii}\nu^{n+1}}{\nu^{n+1}-a}+\sum F^{ii}\left(\frac{\nabla_i\nu^{n+1}}{\nu^{n+1}-a}\right)^2,
\end{split}
\end{gather} where we have used lemma \ref{properties of sigma-k} to get 
\[\sum F^{ii}h_{ii}=F.\]
To proceed, we need evaluate some of the terms in \eqref{2nd critical 2}. By differentiating the curvature equation twice, we have that
\begin{equation}
\sum_{i=1}^{n} F^{ii}h_{ii1}=\nabla_1f(X)
\end{equation} and
\begin{align*}
F^{ii}h_{ii11}+F^{ij,rs}h_{ij1}h_{rs1}&=\nabla_{11}f(X) \geq -C\kappa_1.
\end{align*} Note that, by lemma \ref{concavity}, we have
\[-F^{ij,rs}h_{ij1}h_{rs1}=-F^{pp,qq}h_{pp1}h_{qq1}+F^{pp,qq}h_{pq1}^2 \geq 0.\]
Also, we have that \cite[Lemma 2.2]{Sheng-Urbas-Wang}
\[F^{ij}\nabla_{ij}\nu^{n+1}+ F^{ij}h_{ik}h_{jk}\nu^{n+1}=-\langle \nabla f, e_{n+1}\rangle.\] Thus, it follows from \eqref{2nd critical 2} that
\begin{gather} \label{2nd critical 3}
\begin{split}
0& \geq \sum \frac{F^{ii}\rho_{ii}}{\rho}-\sum \frac{F^{ii}\rho_{i}^2}{\rho^2}-\sum F^{ii}\frac{h_{11i}^2}{\kappa_{1}^2}+\sum F^{ii}\left(\frac{\nabla_i\nu^{n+1}}{\nu^{n+1}-a}\right)^2\\
&\quad +F\kappa_1 + \frac{a}{\nu^{n+1}-a}\sum F^{ii}\kappa_{i}^2-C.
\end{split}
\end{gather} Next, for the third order term, we use the first order critical equation \eqref{1st critical} and apply the Cauchy's inequality:
\[\sum F^{ii}\frac{h_{11i}^2}{\kappa_{1}^2} \leq \sum F^{ii}\left(\frac{\nabla_i \nu^{n+1}}{\nu^{n+1}-a}-\frac{\rho_i}{\rho}\right)^2 \leq (1+\epsilon)\sum F^{ii} \left(\frac{\nabla_i\nu^{n+1}}{\nu^{n+1}-a}\right)^2+\left(1+\frac{1}{\epsilon}\right)\sum F^{ii}\frac{\rho_{i}^2}{\rho^2}\] for some $\epsilon>0$ to be determined later. Substituting this into \eqref{2nd critical 3}, we obtain that
\begin{gather}\label{2nd critical 4}
\begin{split}
0&\geq \sum \frac{F^{ii}\rho_{ii}}{\rho}-\left(2+\frac{1}{\epsilon}\right)\sum \frac{F^{ii}\rho_{i}^2}{\rho^2}\\
&\quad +\frac{a}{\nu^{n+1}-a}\sum F^{ii}\kappa_{i}^2-\epsilon\sum F^{ii}\left(\frac{\nabla_i\nu^{n+1}}{\nu^{n+1}-a}\right)^2\\
&\quad + F\kappa_1-C.
\end{split}
\end{gather} For the second line in \eqref{2nd critical 4}, we use
\[\nabla_{i}\nu^{n+1}=h_{ii}\langle e_i, E_{n+1}\rangle \quad \text{and} \quad \frac{1}{\nu^{n+1}-a} \leq \frac{1}{a} \] to get
\begin{align*}
&\ \frac{a}{\nu^{n+1}-a}\sum F^{ii}\kappa_{i}^2-\epsilon\sum F^{ii}\left(\frac{\nabla_i\nu^{n+1}}{\nu^{n+1}-a}\right)^2\\
\geq &\ \frac{1}{\nu^{n+1}-a}\sum F^{ii} \left[a\kappa_{i}^2-\frac{\epsilon}{a}\cdot C\kappa_{i}^2\right]\\
\geq &\ 0
\end{align*} by choosing $\epsilon$ sufficiently small. Hence, we are left with
\begin{gather}\label{2nd critical 5}
\begin{split}
0&\geq \sum \frac{F^{ii}\rho_{ii}}{\rho}-\left(2+\frac{1}{\epsilon}\right)\sum \frac{F^{ii}\rho_{i}^2}{\rho^2}\\
&\quad + F\kappa_1-C
\end{split}
\end{gather} and it remains to deal with the terms involving $\rho$. By a direct computation, we have
\begin{align*}
\rho_i&=-2\langle X,e_i\rangle+2\langle X, E_{n+1}\rangle \langle e_i,E_{n+1}\rangle,\\
\rho_{ii}&=-2+2h_{ii}\langle X,\nu \rangle + 2 \langle e_i, E_{n+1}\rangle-2h_{ii}\langle X, E_{n+1}\rangle\nu^{n+1}.
\end{align*} In our situation, it is enough to get the following rough estimate
\[\sum \frac{F^{ii}\rho_{ii}}{\rho}-\left(2+\frac{1}{\epsilon}\right)\sum \frac{F^{ii}\rho_{i}^2}{\rho^2} \geq -C\frac{\sum F^{ii}}{\rho^2},\] and then, since 
\[\sum F^{ii} \leq n-k+1 \quad \text{for $F=\frac{\sigma_k}{\sigma_{k-1}}$},\] the inequality \eqref{2nd critical 5} becomes
\[0 \geq F\kappa_1 - \frac{C}{\rho^2} - C\] and the desired estimate follows.

\end{proof}

\begin{remark} \label{key observation}
It is crucial that we have got an extra positive term $F\kappa_1$ from \eqref{commutator formula}, which would be absent if we were dealing with the Hessian quotient equation i.e. 
\[\frac{\sigma_k}{\sigma_{k-1}}(D^2u)=f(x,u).\]
\end{remark}

\subsection*{Statements and Declarations}
The author has no competing interests to declare that are relevant to the content of this article.

\bibliography{refs}
\end{document}